\def\rmd{\mathrm{d}}
\def\Law{\mathrm{Law}}
\def\1{\mathds{1}}
\def\cU{\mathcal{U}}
\def\cB{\mathcal{B}}
\newtheorem{theorem}{Theorem}
\newtheorem{lemma}[theorem]{Lemma}
\newtheorem{corollary}[theorem]{Corollary}
\newtheorem{proposition}[theorem]{Proposition}
\newtheorem{definition}[theorem]{Definition}
\newtheorem{remark}[theorem]{Remark}
\newtheorem{assumption}{Assumption}
\begin{document}

\title{Convergence of kinetic Langevin samplers for non-convex potentials}



\author{Katharina Schuh \thanks{Institute of Analysis and Scientific Computing, TU Wien, Wiedner Hauptstra\ss e 8--10, 1040 Wien, Austria, katharina.schuh@tuwien.ac.at}
\thanks{The author acknowledges partial support from the Austrian Science Fund (FWF), grant F65.} \and Peter A. Whalley \thanks{Seminar for Statistics, Department of Mathematics, ETH Z{\"u}rich, Switzerland,  peter.whalley@math.ethz.ch)} \thanks{The author acknowledges the support of the Engineering and Physical Sciences
Research Council Grant EP/S023291/1 (MAC-MIGS Centre for Doctoral Training)} }

\date{}

\maketitle

\begin{abstract}
We study three kinetic Langevin samplers including the Euler discretization, the BU and the UBU splitting scheme. We provide contraction results in $L^1$-Wasserstein distance for non-convex potentials. 
These results are based on a carefully tailored distance function and an appropriate coupling construction. 
Additionally, the error in the $L^1$-Wasserstein distance between the true target measure and the invariant measure of the discretization scheme is bounded. To get an $\varepsilon$-accuracy in $L^1$-Wasserstein distance, we show complexity guarantees of order $\mathcal{O}(\sqrt{d}/\varepsilon)$ for the Euler scheme and $\mathcal{O}(d^{1/4}/\sqrt{\varepsilon})$ for the UBU scheme under appropriate assumptions on the target measure. 
The results are applicable to interacting particle systems and provide bounds for sampling probability measures of mean-field type. \\
\textbf{Keywords:} Markov Chain Monte Carlo; Langevin diffusion; Wasserstein convergence; numerical analysis of SDEs
\end{abstract}


\section{Introduction}

We are interested in the long-time behaviour of discretizations of the kinetic Langevin dynamics on $\mathbb{R}^{2d}$ given by 
\begin{align} \label{eq:LD}
\begin{cases}
\rmd X_t = V_t \rmd t \\
\rmd V_t = -\nabla U(X_t) \rmd t -\gamma V_t  \rmd t + \sqrt{2\gamma} \rmd B_t,
\end{cases}
\end{align}
where $U$ is a twice-differential potential, $\gamma>0$ denotes the friction parameter and $(B_t)_{t\ge 0}$ is a $d$-dimensional standard Brownian motion. Apart from its origin to model phenomena occurring in physics, this dynamics is applied to sample a given probability distribution $\mu^x(\rmd x) \propto \exp(-U(x))\rmd x$ on $\mathbb{R}^d$. The distribution is the marginal of the Boltzmann-Gibbs distribution $\mu_{\infty}(\rmd x)\propto \exp(-U(x)-|v|^2/2)\rmd x \rmd v$ on $\mathbb{R}^{2d}$ forming the stationary distribution of \eqref{eq:LD}. As observed in \cite{Cao2023}, these dynamics provide a faster convergence behaviour than for example the overdamped Langevin dynamics $\rmd X_t=-\nabla U(X_t)\rmd t+\sqrt{2}\rmd B_t$ with stationary distribution $\mu^x$, which after an appropriate time-rescaling is the high-friction limit of \eqref{eq:LD}.

As in simulations, the exact dynamics are not accessible in general, in practice the dynamics need to be discretized by a numerical scheme. There are many different choices of numerical scheme that have been proposed for kinetic Langevin dynamics from the molecular dynamics community (see \cite{durmus2021uniform,leimkuhler2016computation,leimkuhler2013rational,bussi2007accurate,skeel2002impulse,brunger1984stochastic,melchionna2007design,leimkuhler2013robust}) as well as the machine learning and the MCMC community (see \cite{shen2019randomized,cheng2018underdamped,foster2021shifted,foster2024high,bou2023randomized}). We consider three discretization schemes of the kinetic Langevin dynamics. First, we analyse the explicit Euler-Maruyama scheme which is a simple but straightforward scheme to implement. Secondly, we consider the two more elaborate splitting schemes, BU and UBU, which were introduced in \cite{skeel1999integration,alamo2017word}. These two splitting schemes rely on the fact that the dynamics are split into two parts, i.e., $\cU$ and $\cB$, and each of them is integrated exactly (in the weak sense). As the names of the splitting schemes suggest, for discretization parameter $h>0$, the schemes are constructed by performing a $B$ step before a $U$ step for the BU scheme. Alternatively, a half $\cU$ step (of size $h/2$), then a full $\cB$ step and finally another half $\cU$ step of size $h/2$ is realised for the UBU scheme  (see \cite{skeel1999integration,alamo2017word,sanz2021wasserstein,chada2023unbiased}).

In the analysis of the long-time performance of the three schemes, we aim to understand the influences of a non-convex potential including multi-well potentials. Here, we will assume that $U$ is $\kappa$-strongly convex outside of a Euclidean ball with radius $R>0$ and has an $L$-Lipschitz continuous gradient. This setting is very important in applications in molecular dynamics and Bayesian inference (see for example \cite{leimkuhler2015molecular} and \cite{jasra2005markov}), where the potentials can be highly multimodal.


Before presenting our own contribution, let us first highlight the existing results for the kinetic Langevin sampler.
Already the study of the long-time behaviour of the continuous dynamics has attracted enormous interest through many techniques including Lyapunov techniques (see \cite{wu2001large,mattingly2002ergodicity}), hypocoercivity techniques (see \cite{villani2006hypocoercivity,dolbeault2009hypocoercivity,dolbeault2015hypocoercivity,bakry2006diffusions,baudoin2016wasserstein,baudoin2017bakry,albritton2019variational,Cao2023,brigati2023construct}),  and coupling techniques (see \cite{eberle2019couplings,schuh2022global}). 

It is also important to quantify the long-time behaviour of the discretizations of kinetic Langevin dynamics, as these are implemented in practice in many applications. In combination with bias estimates these result in non-asymptotic guarantees for the implemented algorithms. The long-time behaviour of kinetic Langevin dynamics discretizations have also been studied by various techniques including Lyapunov arguments (see \cite{durmus2021uniform,leimkuhler2015molecular}), coupling methods (see \cite{cheng2018underdamped,dalalyan2020sampling,monmarche2021high,sanz2021wasserstein,leimkuhler2023contractiona,gouraud2022hmc,leimkuhler2023contractionb} in the convex setting and \cite{cheng2018sharp,chak2023reflection} in the non-convex setting) and recently hypocoercivity approaches have been extended to certain discretizations (see \cite{monmarche2022entropic} and \cite{camrud2023second}). 

The focus of this paper is using coupling methods to study the long-time behaviour of the kinetic Langevin sampler. This is a recently popular approach to study sampling algorithms and their non-asymptotic guarantees. It includes overdamped Langevin dynamics-based sampling methods and Hamiltonian dynamics-based sampling methods, which can be united by variants of the OBABO integrator (see \cite{gouraud2022hmc} and \cite{chak2023reflection} for generalized Hamiltonian Monte Carlo). Non-asymptotic guarantees for Hamiltonian Monte Carlo methods have been studied in \cite{mangoubi2017rapid,bou2020coupling,bou2023convergence,bou2023nonlinear,bou2022unadjusted,gouraud2022hmc,camrud2023second,monmarche2022entropic} and also Langevin Monte Carlo methods in \cite{Dalalyan2017,Durmus2017}.

Our main contribution is twofold. First, we establish contraction in Wasserstein distance for each scheme. To our knowledge, this is the first contraction result in a non-globally convex setting for these discretization schemes. More precisely, we adapt the idea of the coupling of \cite{chak2023reflection} for OBABO to the discretization schemes considered here, construct a distance function $\rho$ which is based on the one in \cite{schuh2022global} and show that contraction in $L^1$-Wasserstein distance with respect to the distance $\rho$ holds for the Euler and the BU scheme. The results hold provided the step size is sufficiently small and the friction parameter $\gamma$ is sufficiently large. This is consistent with the observations in the continuous case. Since $\rho$ is equivalent to the Euclidean distance and  contraction for the UBU scheme can be deduced up to an additional factor from the contraction for the BU scheme, we obtain exponential decay in the classical $L^1$ Wasserstein distance for all three schemes, i.e.,  
\begin{align*}
     \mathcal{W}_{1}(\mu_k, \nu_k)\le \mathbf{M }e^{-ck h} \mathcal{W}_1(\mu_0,\nu_0),
\end{align*}
where $\mu_k$ and $\nu_k$ denotes the law after $k$ discretization steps. The contraction rate $c>0$ and the constant $\mathbf{M}>0$ are independent of the step size $h$ and the dimension $d$. This result implies existence of a unique invariant measure and convergence towards it for each scheme. Secondly, we give an error analysis and establish complexity guarantees for each scheme. More precisely we bound the asymptotic bias $\mathcal{W}_{1}(\mu_{h,\infty},\mu_{\infty})$, where $\mu_{h,\infty}$ is the invariant measure of the discretized process with step size $h$ and $\mu_{\infty}$ is the invariant measure of \eqref{eq:LD}. Inspired by the strong convergence of numerical solutions of SDEs \cite[Theorem 1.1]{Milstein2004} we are able to only lose an order of $1/2$ accuracy from local to global strong error estimates in terms of stepsize $h>0$. That is an asymptotic bias of order one for the Euler-Maruyama scheme and order two under additional smoothness assumptions for the UBU scheme. For the UBU integrator, we achieve second-order asymptotic bias estimates inspired by the work of \cite{sanz2021wasserstein}, but in comparison to \cite{sanz2021wasserstein}'s approach we achieve this by using the independence of the Brownian increments during each iteration and average over multiple steps, which is the approach we use for analysis of both integrators to achieve bias estimates in the constructed distance function.

Combining these results for the Euler-Maruyama scheme we have complexity guarantees of order $\mathcal{O}(\sqrt{d}/\epsilon)$ to reach an accuracy of $\epsilon>0$ in $\mathcal{W}_{1}$ for the Euler-Maruyama scheme and additionally for the UBU scheme when combining the respective UBU results. Under additional smoothness assumptions, we can achieve complexity guarantees of order $\mathcal{O}(\sqrt{d}/\sqrt{\epsilon})$ for the UBU scheme and under a stronger smoothness condition complexity guarantees of order $\mathcal{O}(d^{1/4}/\sqrt{\epsilon})$, which is true for many applications of interest.

Finally, we remark that the contraction and complexity results can be carried over to interacting particle models with pairwise interactions (see \Cref{rem:particle} and \Cref{rem:particle_comp}). These models play an important role for instance in modelling granular media in physics (see \cite{benedetto1998granular}), in molecular dynamics problems using a harmonic or Morse interaction potential (see for example 
\cite[Chapter 1]{leimkuhler2015molecular}) or two-layer neural networks in deep learning \cite{Hu2021,Mei2018, Rotskoff2022, Sirignano2022}. 
As the number of particles in the model tends to infinity, the target measure becomes the stationary measure of a distribution-dependent version of the kinetic Langevin dynamics (see Equation \eqref{eq:limitmeas}). Using particlewise adaptations of the coupling and the distance function, contraction results independent of the particle number are proven both for the continuous kinetic Langevin dynamics in \cite{bolley2010trend,kazeykina2020, guillin2022, schuh2022global} and for kinetic samplers in \cite{camrud2023second, bou2023nonlinear}.  The two latter papers also provide complexity guarantees of sampling this type of measure. Our paper contributes to the analysis of kinetic samplers by providing bounds in this setting for the Euler, BU and UBU discretization. 


\paragraph{Notation}
We denote by $\mathcal{B}(\mathbb{R}^{2d})$ the Borel $\sigma$-algebra of the space $\mathbb{R}^{2d}$  and by $\mathcal{P}(\mathbb{R}^{2d})$ the space of all probability measures on $(\mathbb{R}^{2d}, \mathcal{B}(\mathbb{R}^{2d}))$. A coupling $\omega$ of two probability measures $\nu,\eta \in \mathcal{P}(\mathbb{R}^{2d})$ is a probability measure on the space $(\mathbb{R}^{2d}\times \mathbb{R}^{2d}, \mathcal{B}(\mathbb{R}^{2d})\otimes \mathcal{B}(\mathbb{R}^{2d}))$ with marginals $\nu$ and $\eta$. The $L^1$-Wasserstein distance with respect to a distance function $\rho:\mathbb{R}^{2d}\times\mathbb{R}^{2d}\to[0,\infty)$ is given by
\begin{align*}
    \mathcal{W}_{\rho}(\nu, \eta)=\inf_{\omega\in \Pi(\nu, \eta)}\int_{\mathbb{R}^{2d}\times \mathbb{R}^{2d}} \rho(z,z') \omega( \rmd z \ \rmd z'),
\end{align*}
where $\Pi(\eta, \nu)$ denotes the set of all couplings of $\nu$ and $\eta$. If we consider the Euclidean distance for the distance function we write $\mathcal{W}_1$.


\paragraph{Outline of the paper}
The paper is organized in the following way. In \Cref{sec:prelim}, we define rigorously the discretization schemes and state the precise framework. In \Cref{sec:mainresults} the contraction results are stated for the different discretization schemes followed by the accuracy analysis of these schemes and numerical illustrations of the contraction results. The metric and coupling construction and the proofs are postponed to \Cref{sec:dist_coupl} and \Cref{sec:proofs}.

\section{Discretization schemes and preliminaries} \label{sec:prelim}

\subsection{Euler-Maruyama discretization}
The simplest discretization of \eqref{eq:LD} is a simple explicit Euler discretization of the dynamics. For given discretization parameter $h>0$, the scheme is given by 
\begin{align} \label{eq:EM}
\begin{cases}
\mathbf{X}_{k+1}= \mathbf{X}_k+h\mathbf{V}_{k} \\
\mathbf{V}_{k+1}= \mathbf{V}_{k} - h \nabla U(\mathbf{X}_{k})-h\gamma \mathbf{V}_k+\sqrt{2 \gamma h} \xi_{k+1},
\end{cases}
\end{align}
where $(\xi_k)_{k\in\mathbb{N}}$ is a sequence of independent normally distributed random variables.
This discretization scheme has strong local error of the order $3/2$ and global strong error of order $1$ (see \cite[Theorem 1.1]{Milstein2004}).

\begin{remark}
    In \cite{cheng2018underdamped,NesterovMCMC,zhang2023improved} they consider the stochastic Euler scheme, which is derived from freezing the force and solving the dynamics exactly, this allows one to use analysis techniques based on Girsanov's theorem which are not generally applicable to more sophisticated discretizations.
    In practice, one would prefer to resort to more accurate second order discretizations which only require one gradient evaluation per step (see \cite{leimkuhler2013rational}).
    
    We introduce the Euler-Maruyama discretization as its simple expression allow for illustration of the complexity analysis in a more comprehensible way before moving on to second order discretizations. Although one would argue that second order methods are a better choice in practice, the Euler-Maruyama discretization is often used for its ease of implementation, for example, within the context of stochastic gradient HMC (see \cite{chen2014stochastic}). 
\end{remark}


\subsection{Splitting Methods and the UBU discretization}

More advanced numerical schemes of \eqref{eq:LD} can be made by the use of splitting methods, where the dynamics are split into different components (deterministic and stochastic) which can be integrated exactly in the weak sense. We refer the reader to \cite{mclachlan2002splitting} for a comprehensive introduction to splitting methods. By careful design of the appropriate splittings one can create high order numerical methods in the strong and the weak sense, as discussed in \cite{leimkuhler2013rational, bussi2007accurate}. 

A class of splitting schemes which are typically used in molecular dynamics (see \cite{leimkuhler2015molecular}) are based on splitting the SDE \eqref{eq:LD} in the following way
\[
\begin{pmatrix}
\rmd x \\
\rmd v
\end{pmatrix} = \underbrace{\begin{pmatrix}
0 \\
-\nabla U(x)\rmd t
\end{pmatrix}}_{\mathcal{B}}+ \underbrace{\begin{pmatrix}
v\rmd t \\
0
\end{pmatrix}}_{\mathcal{A}} +\underbrace{\begin{pmatrix}
0 \\
-\gamma v \rmd t + \sqrt{2\gamma}\rmd B_t
\end{pmatrix}}_{\mathcal{O}},
\]
where the $\mathcal{B}$, $\mathcal{A}$ and $\mathcal{O}$ parts can be integrated exactly over a time interval of size $h > 0$ and composed in different orders to produce different splitting methods. These include the popular integrators BAOAB, OBABO and OABAO, where palindromic sequences produce weak order two numerical methods (see \cite{leimkuhler2013rational}). The OBABO and OABAO integrators have been studied in the context of non-asymptotic guarantees in \cite{chak2023reflection,monmarche2021high,camrud2023second,gouraud2022hmc,bou2023mixing}. These methods are weak order two, but are only strong order one. 

Strong order methods are particularly important in the context of multilevel Monte Carlo (see \cite{giles2015multilevel}) and recently in unbiased estimation in \cite{chada2023unbiased}. An alternative splitting first introduced in \cite{skeel1999integration} requires only one gradient evaluation per step, yet surprisingly is globally strong order two. It is based on splitting the SDE \eqref{eq:LD} into the following components
\[
\begin{pmatrix}
\rmd x \\
\rmd v
\end{pmatrix} = \underbrace{\begin{pmatrix}
0 \\
-\nabla U(x)\rmd t
\end{pmatrix}}_{\mathcal{B}} +\underbrace{\begin{pmatrix}
v\rmd t \\
-\gamma v \rmd t + \sqrt{2\gamma}\rmd B_t
\end{pmatrix}}_{\mathcal{U}},
\]
which can be integrated in the weak sense exactly over an interval of size $h>0$. As in the earlier methods, we can compose the maps corresponding to the exact integration of the $\mathcal{B}$ and $\mathcal{U}$ parts to design numerical integrators of kinetic Langevin dynamics. \cite{alamo2017word} consider the BUB and the UBU methods. The UBU integrator with step size $h > 0$ is defined by a half step in $\mathcal{U}$ (of size $h/2$) followed by a full $\mathcal{B}$ step (of size $h$), followed by a half $\mathcal{U}$ step (of size $h/2$). Let us define $\eta = \exp{\left(-\gamma h\right)}$ then the operators corresponding to these maps are given by
\begin{equation}\label{eq:Bdef}
\mathcal{B}(x,v,h) = (x,v - h\nabla U(x)),
\end{equation}
and
\begin{equation}\label{eq:Udef}
\begin{split}
\mathcal{U}(x,v,h,\xi^{(1)},\xi^{(2)}) &= \Big(x + \frac{1-\eta}{\gamma}v + \sqrt{\frac{2}{\gamma}}\left(\mathcal{Z}^{(1)}\left(h,\xi^{(1)}\right) - \mathcal{Z}^{(2)}\left(h,\xi^{(1)},\xi^{(2)}\right) \right),\\
& v\eta + \sqrt{2\gamma}\mathcal{Z}^{(2)}\left(h,\xi^{(1)},\xi^{(2)}\right)\Big),
\end{split}
\end{equation}
where 
\begin{equation}\label{eq:Z12def}
\begin{split}
\mathcal{Z}^{(1)}\left(h,\xi^{(1)}\right) &= \sqrt{h}\xi^{(1)},\\
\mathcal{Z}^{(2)}\left(h,\xi^{(1)},\xi^{(2)}\right) &= \sqrt{\frac{1-\eta^{2}}{2\gamma}}\Bigg(\sqrt{\frac{1-\eta}{1+\eta}\cdot \frac{2}{\gamma h}}\xi^{(1)} + \sqrt{1-\frac{1-\eta}{1+\eta}\cdot\frac{2}{\gamma h}}\xi^{(2)}\Bigg),
\end{split}
\end{equation}
and $\xi^{(1)}, \xi^{(2)} \sim \mathcal{N}\left(0,I_{d}\right)$ are independent standard normal random variables. $\mathcal{Z}^{(1)}$ with stepsize $h>0$ is equivalent to {$\int^{h}_{0}dB_{s}$}, and $\mathcal{Z}^{(2)}$ is equivalent to {$\int^{h}_{0}e^{-(h-s)\gamma}dB_{s}$}.
UBU integration scheme with stepsize $h >0 $ is defined by 
\begin{equation}\label{eq:PhUBU}
\begin{split}
\left(x_{k+1},v_{k+1}\right)&=\mathcal{UBU}\left(x_k,v_k,h,\xi^{(1)}_{k+1},\xi^{(2)}_{k+1},\xi^{(3)}_{k+1},\xi^{(4)}_{k+1}\right)
\\
&=\mathcal{U}\left(\mathcal{B}\left(\mathcal{U}\left(x_{k},v_{k},h/2,\xi^{(1)}_{k+1},\xi^{(2)}_{k+1}\right),h\right),h/2,\xi^{(3)}_{k+1},\xi^{(4)}_{k+1}\right),
\end{split}
\end{equation}
where $\xi^{(i)}_{k+1} \sim \mathcal{N}(0,I_{d})$ for all $i = 1,...,4$ and $k \in \mathbb{N}$. The UBU integration scheme's non-asymptotic guarantees were first studied in \cite{sanz2021wasserstein}, where discretization analysis provided global strong order two estimates under an additional smoothness assumption. This integrator was also studied in \cite{chada2023unbiased,paulin2024sampling} with stochastic gradient methods which allow strong order $3/2$ and $2$ with non-asymptotic guarantees.

\begin{remark}
   \cite{shen2019randomized} and \cite{bou2023randomized} introduce discretizations for kinetic Langevin dynamics which randomize the point at which the gradient is evaluated resulting in improved dimension-dependence without additional smoothness. The methods we develop in this work could equally be used to prove convergence of these methods by coupling the times at which the force evaluations take place. This could then be combined with discretization analysis to achieve non-asymptotic guarantees, but this was not the focus of the current work.
\end{remark}

\subsection{Assumptions}

We impose the following assumption on the potential $U$.

\begin{assumption}\label{ass}
The potential $U$ is $\kappa$-strongly convex outside a Euclidean ball with radius $R\ge 0$, i.e., there exist $\kappa>0$ and $R\ge 0$ such that
\begin{align*}
(\nabla U(x)-\nabla U(y))\cdot(x-y)\ge \kappa|x-y |^2 \qquad \text{for } x, y\in\mathbb{R}^d \text{ such that } |x-y|>R.
\end{align*}
Moreover, $\nabla U$ is $L$-Lipschitz continuous. 
\end{assumption}

From this condition on $U$ we deduce that $U$ can be split in an quadratic term and in a term which is convex outside an Euclidean ball, i.e., 
$U(x)=\frac{1}{2}x^T Kx + G(x)$, where the function $G$ satisfies
\begin{align*}
(\nabla G(x)-\nabla G(y))\cdot(x-y)\ge 0 \qquad \text{for } x, y\in\mathbb{R}^d \text{ such that } |x-y|>R.
\end{align*} 
and the matrix $K$ is positive-definite and symmetric with smallest eigenvalue $\kappa>0$.
We denote by $L_G$ the Lipschitz constant of the gradient of the function $G$ and by  $L_K$ the Lipschitz constant of the function $x\mapsto Kx $. We note that this splitting is not unique and a possible choice for $K$ is always given by $K=\kappa I_d$, which is not necessarily the optimal one. 

\subsection{Sketch of the distance function and the coupling}

Next, we give a rough sketch of the construction of the coupling and the accompanying metric and demonstrate how they lead to contraction in Wasserstein distance for the discretization schemes. 

For the Euler discretization, consider two sets of normally distributed random variables $(\xi_k)_{k\in \mathbb{N}}$ and $(\xi_k')_{k\in \mathbb{N}}$. Given initial values $(x,v), (x',v')\in \mathbb{R}^{2d}$, let $((\mathbf{X}_k,\mathbf{V}_k),(\mathbf{X}_k',\mathbf{V}_k'))_{k\in \mathbb{N}}$ be the coupling of two solutions to \eqref{eq:EM} with $(\xi_k)_{k\in \mathbb{N}}$ and $(\xi_k')_{k\in \mathbb{N}}$, respectively.  Denote by $(Z_k,W_k)_{k\in \mathbb{N}}=(\mathbf{X}_k-\mathbf{X}_k',\mathbf{V}_k-\mathbf{V}_k')_{k\in \mathbb{N}}$ the difference process of the two copies. 
If the difference process is far apart from the origin, we consider a twisted $2$-norm $r_l(k)$ as in the continuous setting (see \cite{schuh2022global}). Using a synchronous coupling, i.e., $\xi_k=\xi_k'$, the noise cancels in the difference process and we show local contraction for this norm thanks to the strong convexity outside a Euclidean ball with radius $R$ of the potential $U$ (\Cref{ass}) and the discretized part of the Ornstein-Uhlenbeck part of the Langevin dynamics.
If the difference process is close to the origin, we consider a distance function of the form $r_s(k)=\alpha |Z_k|+ |q_k|$ with $q_k=Z_k+\gamma^{-1} W_k$. It holds for the process $(Z_k,q_k)_{k\in \mathbb{N}}$
\begin{align*}
    \begin{cases}
         Z_{k+1}=Z_{k}+ h \gamma(q_k-Z_k)
        \\  q_{k+1}=q_k - h\gamma^{-1} (\nabla U(\mathbf{X}_k)-\nabla U(\mathbf{X}_k'))+ \sqrt{2\gamma^{-1} h}(\xi_{k+1}-\xi_{k+1}'). 
    \end{cases}
\end{align*}
Then, for $q_k=0$ and $\xi_k=\xi_k'$, the first term in the distance function decreases due to the first equation and the contribution of the second term in the distance function can be controlled by the first one by choosing the parameter $\alpha$ sufficiently large. Note that thanks to the synchronous coupling we do not have disturbance by the noise. Apart from $q_k=0$ we want to use the noise to get closer to the line $q_k=0$. In contrast to the continuous case we do not use a completely reflecting coupling, since in the case the process is already close to the line $q_k=0$, the reflected noise can not be controlled. We rather consider the coupling introduced for HMC in \cite{bou2020coupling}, which is applied to analyse OBABO in \cite{chak2023reflection}. Here we have a transition from the synchronous coupling on the line  $q_k=0$ to a reflection coupling if the difference process is far away from the line. Note that if $h$ tends to zero, we recover the coupling from \cite{eberle2019couplings}.
If the difference process is close to the origin, i.e., $\alpha |Z_k|+ |q_k|<R_1$ for some constant $R_1$, local contraction on average is shown for the distance $f(r_s(k))$, where $f$ is an appropriate concave function. As in \cite{schuh2022global}, the two distances $f(r_s(k))$ and $r_l$ are continuously glued to a joint distance function such that the two local contraction results imply a global one. The transition between the two distances is illustrated in \Cref{fig:metric}.
\begin{figure}
    \centering
    \includegraphics[scale=0.5]{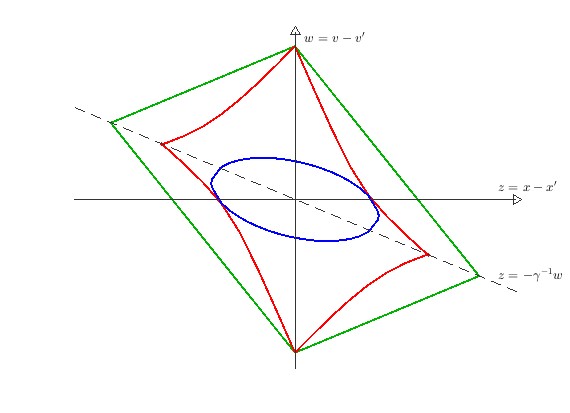}
    \caption{\footnotesize Level sets of the distances: The green rectangle shows a level set of the distance $r_s$, while the blue ellipse shows a level set of the distance $r_l$. The red deformed rectangle illustrates the transition between the two distances. Outside the red rectangle and on the gray dashed line the synchronous coupling is considered while inside the red rectangle the coupling from \cite{bou2020coupling} is applied.  }
    \label{fig:metric}
\end{figure}

For the BU scheme, two sets of normally distributed random variables $(\xi_k^{(1)},\xi_k^{(2)})_{k\in \mathbb{N}}$ and $(\xi_k^{(1)'},\xi_k^{(2)'})_{k\in \mathbb{N}}$ are considered for the coupling. In this case, $\xi_k^{(1)}$ and $\xi_k^{(1)'}$ are coupled as $\xi_k$ and $\xi_k'$ in the Euler scheme, whereas the random variables $\xi_k^{(2)}$ and $\xi_k^{(2)'}$ are coupled synchronously. For the same distance function as in the Euler scheme, global contraction is shown by proving two local contraction results. 

The UBU scheme is analysed using the coupling and the result for the BU scheme. Namely for given $k\in \mathbb{N}$, $k$ $\mathcal{U}_{1/2}\mathcal{B}\mathcal{U}_{1/2}$ steps can be split to 
\begin{align*}
    (\mathcal{U}_{1/2}\mathcal{B}\mathcal{U}_{1/2})^k=\mathcal{U}_{1/2}(\mathcal{B}\mathcal{U})^{k-1}(\mathcal{B}\mathcal{U}_{1/2}),
\end{align*}
here $\mathcal{U}_{1/2}$ denotes a $\mathcal{U}$ step with size $h/2$.
Then for $(k-1)$ $\mathcal{B}\mathcal{U}$ steps the known contraction result is used. The missing steps $\mathcal{U}_{1/2}$ and $\mathcal{B}\mathcal{U}_{1/2}$ are controlled using a synchronous coupling for the random variables and noting that the distance can be controlled for these steps by the fact that $\nabla U$ is $L$-Lipschitz continuous.

\section{Main results} \label{sec:mainresults}

\subsection{Contraction for the Euler-Maruyama scheme}
We state the contraction result for the Euler-Maruyama scheme given by \eqref{eq:EM}.
\begin{theorem}[Contraction for the Euler-Maruyama scheme] \label{thm:contra_EM}
Let $U$ satisfy \Cref{ass}. Let $(x,v), (x',v')\in \mathbb{R}^{2d}$. Let $\gamma>0$ and $h>0$ satisfy
\begin{align} 
&\gamma \ge 4 L_G/\sqrt{\kappa} \qquad  \text{and}   \label{eq:gamma_cond}
\\ &   L\gamma^{-1} h \le \min\Big(\frac{1}{8LR_1^2}, \frac{1}{256\cdot 75(2L\gamma^{-2}+1) }, \frac{L\gamma^{-2}}{8}, \frac{L}{32L_K}\Big) . \label{eq:condition_h} 
\end{align}
Then there exists a distance $\rho:\mathbb{R}^{2d}\times \mathbb{R}^{2d}\to[0,\infty)$ such that for all $k\in \mathbb{N}$
\begin{align}\label{eq:contraction_new}
\mathbb{E}[\rho((\mathbf{X}_{k},\mathbf{V}_{k}),(\mathbf{X}_{k}',\mathbf{V}_{k}'))]\le (1-c h)^k\rho ((x,v),(x',v')),
\end{align}
where the contraction rate $c$ is given by 
\begin{align} \label{eq:c}
c= \min\Big(f'(R_1)\frac{\epsilon \kappa \gamma^{-1}}{8} \mathcal{E}, f'(R_1)\frac{\epsilon \gamma}{16} \mathcal{E},f'(R_1)\frac{\gamma}{8},f'(R_1)\frac{\gamma \alpha}{2},\frac{9\hat{c}}{640}, \frac{\hat{c}}{32(4\alpha+1)} \Big).
\end{align}
The definition of the distance function $\rho:\mathbb{R}^{2d}\times \mathbb{R}^{2d}\to[0,\infty)$, the function $f:[0,\infty)\to[0,\infty)$ and the constants $\epsilon$, $\alpha$, $\hat{c}$, $\mathcal{E}$ and $R_1$ are independent of $d$ and $h$ and are given in Section~\ref{sec:dist}.
\end{theorem}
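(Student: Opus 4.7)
I would follow the roadmap given in Section~2.4, combining a case-dependent coupling with a tailored semi-metric $\rho$, in the spirit of the continuous-time construction of \cite{schuh2022global} and the OBABO analogue of \cite{chak2023reflection}. Given two copies $(\mathbf{X}_k,\mathbf{V}_k)$ and $(\mathbf{X}_k',\mathbf{V}_k')$ driven by Gaussian inputs $\xi_{k+1}$ and $\xi_{k+1}'$, set $Z_k:=\mathbf{X}_k-\mathbf{X}_k'$, $W_k:=\mathbf{V}_k-\mathbf{V}_k'$, and $q_k:=Z_k+\gamma^{-1}W_k$. Subtracting the two copies of \eqref{eq:EM} and rewriting in the $(Z,q)$ coordinates yields
\[
Z_{k+1}=Z_k+h\gamma(q_k-Z_k),\qquad q_{k+1}=q_k-h\gamma^{-1}\bigl(\nabla U(\mathbf{X}_k)-\nabla U(\mathbf{X}_k')\bigr)+\sqrt{2h\gamma^{-1}}\,(\xi_{k+1}-\xi_{k+1}'),
\]
the discrete analogue of the $(Z,q)$-system used in \cite{schuh2022global}. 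The Gaussian pair is coupled by decomposing $\xi_{k+1}'$ into components parallel and perpendicular to $q_k$: the parallel part follows a Lipschitz interpolation between reflection and synchronous coupling (as a function of $r_s(k):=\alpha|Z_k|+|q_k|$), while the perpendicular part is always synchronous. Outside the ``near'' region $\{r_s\le R_1\}$ the coupling degenerates to the purely synchronous one.

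\textbf{Distance and local contractivity.} On the far region $\{r_s>R_1\}$ I would define $\rho$ via a twisted quadratic form $r_l(k)^2$ of $(Z_k,W_k)$ built from the splitting $U(x)=\tfrac12 x^{T}Kx+G(x)$ of \Cref{ass}, with small parameter $\epsilon>0$. Since the coupling is synchronous there, the noise increment in $q_{k+1}-q_k$ cancels; together with the inequality $\langle Z_k,\nabla U(\mathbf{X}_k)-\nabla U(\mathbf{X}_k')\rangle\ge\kappa|Z_k|^2$ (automatic because $|Z_k|\ge R$ in this region) and the friction condition \eqref{eq:gamma_cond}, this gives $\mathbb{E}[r_l(k+1)^2]\le(1-c_1 h)\,r_l(k)^2$; the $O(h^2)$ Euler remainders are absorbed using \eqref{eq:condition_h}. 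On the near region I set $\rho=f(r_s)$ with $f:[0,\infty)\to[0,\infty)$ concave, nondecreasing, and $C^1$ up to the transition point, as in \cite{schuh2022global,eberle2019couplings}. Two sub-cases appear: when $|q_k|$ is small, the deterministic relation contracts $|Z_k|$ at rate $h\gamma$ and the growth of $|q_{k+1}|$ through $|\nabla U(\mathbf{X}_k)-\nabla U(\mathbf{X}_k')|\le L|Z_k|$ is absorbed by choosing $\alpha$ large; when $|q_k|$ is bounded away from $0$, the reflection component of the coupling together with the strict concavity of $f$ (the Itô-type argument of \cite{eberle2019couplings}, adapted to discrete time as in \cite{bou2020coupling,chak2023reflection}) converts the $O(h\gamma^{-1})$ noise variance into a strictly negative drift in $\mathbb{E}[f(r_s(k+1))]-f(r_s(k))$. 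Continuously gluing the two distances at $r_s=R_1$, as in \cite{schuh2022global}, produces a single semi-metric $\rho$, equivalent to the Euclidean distance, which satisfies a one-step contraction with rate \eqref{eq:c}; iterating in $k$ then yields \eqref{eq:contraction_new}.

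\textbf{Main obstacle.} The hard part will be the bookkeeping in the near region: one must simultaneously handle (i) the $O(h^2)$ Taylor remainders produced by the explicit Euler step, which have to be dominated by the $O(h)$ contractive terms; (ii) the unboundedness of $|f''|$ near $0$ needed to extract a strictly negative second-order drift from the reflection coupling; and (iii) the choice of $\alpha$, $\epsilon$, $R_1$ and $f$ that makes $\rho$ continuously differentiable across the transition boundary while preserving contractivity in both regimes. Condition \eqref{eq:condition_h} is calibrated precisely so that the quadratic discretization errors are beaten by the contraction terms, and \eqref{eq:gamma_cond} ensures that the Ornstein--Uhlenbeck damping dominates the non-convex forcing $G$. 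Once these parameters are fixed as in Section~\ref{sec:dist}, the one-step estimate iterates straightforwardly to the claimed geometric rate.
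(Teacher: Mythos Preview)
Your overall architecture matches the paper's, but two steps in your sketch would not go through as written.

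\textbf{Far region.} You claim that in the region where the synchronous coupling is applied one has ``$|Z_k|\ge R$ automatically'', so that the strong-convexity inequality $\langle Z_k,\nabla U(\mathbf{X}_k)-\nabla U(\mathbf{X}_k')\rangle\ge\kappa|Z_k|^2$ can be used directly. This is false: the far region is characterised by $D_{\mathcal{K}}+\epsilon r_l\le r_s$, equivalently $r_l^2\ge\mathcal{R}$, and this can hold with $|Z_k|<R$ (for instance when $|W_k|$ is large). The paper's argument (Proposition~\ref{thm:case1}) does \emph{not} assume $|Z_k|\ge R$; instead it performs a matrix computation in the twisted $(Z,W)$ coordinates and absorbs the bounded non-convexity error $L_G\gamma^{-1}R^2\mathds{1}_{\{|Z_k|\le R\}}$ into the contraction margin precisely because $r_l^2\ge\mathcal{R}=\tau^{-1}L_G\gamma^{-2}R^2$. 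Without this, your far-region estimate breaks down whenever $|Z_k|$ is small but $|W_k|$ is large.

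\textbf{Near region, small $|q_k|$.} You invoke ``unboundedness of $|f''|$ near $0$'' to extract a negative second-order drift from the reflection coupling. But for the $f$ in Section~\ref{sec:dist} one has $f''(0)=0$ and $|f''|$ is bounded; the mechanism is different. The paper splits the near region into three sub-cases: $|Z_k|\ge4|q_k|$ (deterministic contraction of $|Z_k|$, using the exact identity $\mathbb{E}[|\hat q_k+\Xi_{k+1}|]=|\hat q_k|$ which requires the specific Bou-Rabee coupling, not a generic Lipschitz interpolation), then $|q_k|\ge\sqrt{2\gamma^{-1}h}$ versus $|q_k|<\sqrt{2\gamma^{-1}h}$. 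In the last sub-case the gain comes from restricting to the event $A=\{|q_k|+2\sqrt{2\gamma^{-1}h}\le\sqrt{2\gamma^{-1}h}\,|\hat q_k+\Xi_{k+1}|\le|q_k|+6\sqrt{2\gamma^{-1}h}\}$: on $A$ the argument of $f''$ is pushed above $\sqrt{2\gamma^{-1}h}$, where $f''\lesssim -\alpha\gamma^2\sqrt{2\gamma^{-1}h}\,f'(r_s(k))$, and $\mathbb{P}(A)\gtrsim |q_k|/\sqrt{2\gamma^{-1}h}$ by a concavity estimate. The product of these two factors yields the $O(h)$ negative drift that beats the $O(h)$ positive term $f'(r_s(k))\alpha\gamma h|q_k|$. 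Your two-case split (``$|q_k|$ small'' vs ``$|q_k|$ bounded away from $0$'') misses this balance and would not close.
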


\begin{proof}
The proof is given in Section~\ref{sec:proofs_euler}.
\end{proof}

Consider two probability measures $\mu_0$, $\nu_0$ on $\mathbb{R}^{2d}$. Let $(\mathbf{X}_0,\mathbf{V}_0)$ and $(\mathbf{X}_0',\mathbf{V}_0')$ be distributed with respect to $\mu_0$ and $\nu_0$, respectively.
Then, we denote by $\mu_k=\Law(\mathbf{X}_k,\mathbf{V}_k)$ and $\nu_k=\Law(\mathbf{X}_k',\mathbf{V}_k')$ the laws of the discretised kinetic Langevin sampler after $k\in\mathbb{N}$ steps, respectively.

\begin{corollary}[Contraction in $L^1$-Wasserstein distance]
Suppose Assumption~\ref{ass}, \eqref{eq:gamma_cond} and \eqref{eq:condition_h} hold.
Then, for $k\in\mathbb{N}$
\begin{align}
    & \mathcal{W}_{\rho}(\mu_k, \nu_k)\le \exp(-ckh)  \mathcal{W}_{\rho}(\mu_0, \nu_0), \label{eq:contr_k_step}
    \\ & \mathcal{W}_{1}(\mu_k, \nu_k)\le \mathbf{M}\exp(-ckh)  \mathcal{W}_{1}(\mu_0, \nu_0) ,
\end{align}
where $c$ is given by \eqref{eq:c}, and $\mathbf{M}$ is given in \eqref{eq:M}. Both constants are independent of $d$ and $h$.
Moreover, existence of a unique invariant measure $\mu_{h,\infty}$ and convergence towards its holds, i.e., for $k \in \mathbb{N}$
\begin{align*}
    & \mathcal{W}_{\rho}(\mu_k, \mu_{h,\infty})\le \exp(-ckh)  \mathcal{W}_{\rho}(\mu_0, \mu_{h,\infty}),
    \\ & \mathcal{W}_{1}(\mu_k, \mu_{h,\infty})\le \mathbf{M}\exp(-ckh)  \mathcal{W}_{1}(\mu_0, \mu_{h,\infty}).
\end{align*}
\end{corollary}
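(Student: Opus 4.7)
The plan is to derive the corollary directly from Theorem~\ref{thm:contra_EM}, combined with the equivalence between the tailored distance $\rho$ and the Euclidean distance that is part of its construction in Section~\ref{sec:dist}. The two contraction inequalities transfer the pointwise bound \eqref{eq:contraction_new} to the level of measures, and the statements about the invariant measure then follow from a standard Banach fixed point argument.

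First, I would obtain \eqref{eq:contr_k_step} by applying Theorem~\ref{thm:contra_EM} along an optimal coupling. Concretely, I choose an initial pair $((\mathbf{X}_0,\mathbf{V}_0),(\mathbf{X}_0',\mathbf{V}_0'))$ realising $\mathcal{W}_\rho(\mu_0,\nu_0)$ (such an optimal coupling exists because $\rho$ is continuous and bounded below), and evolve it under the coupling of Section~\ref{sec:dist_coupl} on which Theorem~\ref{thm:contra_EM} relies. Taking expectations in \eqref{eq:contraction_new} and using $(1-ch)^k\le e^{-ckh}$ gives
\begin{equation*}
\mathbb{E}[\rho((\mathbf{X}_k,\mathbf{V}_k),(\mathbf{X}_k',\mathbf{V}_k'))] \le e^{-ckh}\mathcal{W}_\rho(\mu_0,\nu_0).
\end{equation*}
Since the marginals of the joint law at step $k$ are $\mu_k$ and $\nu_k$, the left-hand side upper bounds $\mathcal{W}_\rho(\mu_k,\nu_k)$, which is exactly \eqref{eq:contr_k_step}. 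For the $\mathcal{W}_1$ bound I would invoke the two-sided estimate $c_1|\cdot|\le \rho\le c_2|\cdot|$ that will come out of the explicit definition of $\rho$: this yields $\mathcal{W}_1\le c_1^{-1}\mathcal{W}_\rho$ and $\mathcal{W}_\rho\le c_2\mathcal{W}_1$, so that plugging into \eqref{eq:contr_k_step} gives $\mathcal{W}_1(\mu_k,\nu_k)\le (c_2/c_1)\,e^{-ckh}\mathcal{W}_1(\mu_0,\nu_0)$, with $\mathbf{M}=c_2/c_1$ as recorded in \eqref{eq:M}.

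For the existence and uniqueness of the invariant measure $\mu_{h,\infty}$, I would apply Banach's fixed point theorem to the Markov kernel $P_h$ of the Euler scheme on the complete metric space $(\mathcal{P}_\rho(\mathbb{R}^{2d}),\mathcal{W}_\rho)$ of probability measures with finite $\rho$-moment. The one-step case of \eqref{eq:contr_k_step} gives the strict contraction $\mathcal{W}_\rho(\mu P_h,\nu P_h)\le (1-ch)\mathcal{W}_\rho(\mu,\nu)$, and a short Gaussian moment computation on \eqref{eq:EM} shows $\delta_{(x,v)}P_h\in\mathcal{P}_\rho$ for every $(x,v)$, so that $P_h$ preserves $\mathcal{P}_\rho$. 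This produces a unique fixed point $\mu_{h,\infty}\in\mathcal{P}_\rho$. The announced convergence bounds then follow by specialising \eqref{eq:contr_k_step} and the $\mathcal{W}_1$ version to $\nu_0=\mu_{h,\infty}$ and using invariance $\mu_{h,\infty}P_h=\mu_{h,\infty}$.

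The only non-routine point I anticipate is purely bookkeeping: extracting explicit constants $c_1,c_2$ (and hence $\mathbf{M}$) from the definition of $\rho$ given in Section~\ref{sec:dist}, and verifying the elementary moment bound needed to ensure that $P_h$ preserves $\mathcal{P}_\rho$. Once those are in hand, the corollary is an immediate consequence of Theorem~\ref{thm:contra_EM}.
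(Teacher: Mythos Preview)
Your proposal is correct and follows essentially the same approach as the paper: deduce \eqref{eq:contr_k_step} from Theorem~\ref{thm:contra_EM} via an optimal coupling, pass to $\mathcal{W}_1$ using the equivalence \eqref{eq:dist_equiv} between $\rho$ and the Euclidean distance (which gives exactly $\mathbf{M}$ as the prefactor), and obtain the invariant measure by Banach's fixed point theorem. The paper's own proof is just the two-sentence summary of this argument, so your elaboration matches it precisely.
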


\begin{proof}
The results follow immediately from Theorem~\ref{thm:contra_EM} and the fact that $\rho$ is equivalent to the Euclidean distance by \eqref{eq:dist_equiv}. Existence of a unique invariant measure holds by Banach fixed point theorem.
\end{proof}

\begin{remark}
    We note that for the strongly convex case the contraction rate reduces to $$c= \min(\kappa/(8\gamma^2),1/16)\gamma$$ and is maximized for $\kappa= \gamma^2/2$. If $L_G \le \sqrt{8} \kappa$, the condition $\gamma\ge 4L_G/\sqrt{\kappa}$ is satisfied. Hence, for small perturbations of the Gaussian case, we obtain that the contraction rate given by $c= \gamma/16= \sqrt{2\kappa}/16$ is of optimal order in $\kappa$ (see for example \cite{gouraud2022hmc}).
\end{remark}
\begin{remark}[Particle model] \label{rem:particle}
    As in the continuous case (\cite{schuh2022global}), the convergence result can be carried over to show convergence for a particle system with $N\in\mathbb N$ particles, where the potential $U: \mathbb{R}^{dN}\to \mathbb R$ is of the form 
    \begin{align*}
        U(\mathbf{x})=\sum_{i=1}^N \Big(V(x^i)+\frac{1}{N}\sum_{j=1 , j\neq i}^N W(x^i-x^j)\Big)
    \end{align*}
    with confining potential $V:\mathbb R^d\to \mathbb R$ and interaction potential $W:\mathbb R^d\to \mathbb R$.
    In this case, contraction can be shown in $L^1$-Wasserstein distance with respect to a particlewise adaptation of the distance $\rho$ and the Euclidean distance, i.e. 
    \begin{align*}
        \rho_N ((\mathbf{x},\mathbf{v}), (\mathbf{x}',\mathbf{v}'))=\frac{1}{N}\sum_{i=1}^N \rho((x^i,v^i),(x^{i'},v^{i'})), \quad \text{and } \quad \ell^N_1=\frac{1}{N}\sum_{i=1}^N |(x^i,v^i)-(x^{i'},v^{i'})|
    \end{align*}
    provided that the interaction potential has Lipschitz continuous gradient and the Lipschitz constant is sufficiently small compared to the strong convexity constant $\kappa$, i.e., $W$ is a small perturbation compared to the confining potential. Following \cite{schuh2022global}, the condition on the Lipschitz coefficient is needed in this approach to handle the interaction as an additional perturbation. In this case, contraction for the particle system is of the form 
    \begin{align*}
        \mathcal{W}_{\rho_N}(\mu^N_k, \nu^N_k)\le \exp(-ckh) \mathcal{W}_{\rho_N}(\mu^N_0, \nu^N_0),
    \end{align*}
    where $\mu^N_k$ and $\nu^N_k$ denote the law of the discretized Langevin dynamics for the $N$-particle system and $c$ is the contraction rate which is independent of $N$ and which will be of the same form as the contraction rate in \eqref{eq:c} up to some constant prefactor.    
        
\end{remark}

\subsection{Complexity guarantees for the Euler-Maruyama scheme}

Next, we bound the distance between the target measure $\mu_{\infty}(\rmd x)\propto \exp(-U(x)-|v|^2/2)\rmd x $ and the invariant measure $\mu_{h,\infty}$ of \eqref{eq:EM}.

\begin{theorem}[Strong accuracy] \label{thm:strong_accuracy}
    Suppose Assumption~\ref{ass}, \eqref{eq:gamma_cond} and \eqref{eq:condition_h} hold. Then,
    \begin{align*}
        \mathcal{W}_{\rho}(\mu_{\infty},\mu_{h,\infty})& \le h\Big(1+\frac{\gamma(1+ 2L\gamma^{-2})}{c}\Big)20 L \gamma^{-2} \sqrt{d}, 
    \end{align*}
    where $c$ is given in \eqref{eq:c}. 
\end{theorem}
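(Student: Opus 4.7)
The strategy is to combine the $\mathcal{W}_\rho$-contraction of Theorem~\ref{thm:contra_EM} with a bound on the local bias between the continuous kinetic Langevin flow and one Euler step, starting from the stationary measure $\mu_\infty$. A naive Banach argument based only on the strong local error would yield $O(\sqrt h)$; the announced $O(h)$ rate, and hence the precise structure of the bound, relies on a Milstein-type martingale argument exploiting the independence of Brownian increments across iterations, as highlighted by the authors after the statement.

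\emph{Reduction via contraction.} Since $\mu_\infty$ is invariant under the continuous semigroup $P_h$ and $\mu_{h,\infty}$ is invariant under the Euler transition kernel $\pi_h$, the triangle inequality together with the $N$-fold contraction of $\pi_h$ yields, for every $N\in\mathbb N$,
\begin{equation*}
\mathcal W_\rho(\mu_\infty,\mu_{h,\infty})\leq \mathcal W_\rho(\mu_\infty,\mu_\infty\pi_h^N)+(1-ch)^N\,\mathcal W_\rho(\mu_\infty,\mu_{h,\infty}),
\end{equation*}
so that it suffices to control $\sup_N\mathcal W_\rho(\mu_\infty,\mu_\infty\pi_h^N)$, the cumulative bias of the Euler chain started at $\mu_\infty$.

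\emph{Local-error decomposition and martingale averaging.} Sample $(X_0,V_0)\sim\mu_\infty$ and drive the continuous process and the Euler iterates by the same Brownian motion, identifying $\sqrt{2\gamma h}\,\xi_{k+1}=\sqrt{2\gamma}(B_{(k+1)h}-B_{kh})$; by invariance, $(X_{kh},V_{kh})\sim\mu_\infty$ for every $k$. Letting $\Psi_h$ denote the Euler update map, an It\^o--Taylor expansion of the one-step local error $L_k=(X_{(k+1)h},V_{(k+1)h})-\Psi_h(X_{kh},V_{kh})$ gives $L_k=L_k^{\mathrm{pred}}+L_k^{\mathrm{mart}}$, where $L_k^{\mathrm{pred}}$ is $\mathcal F_{kh}$-measurable with $\mathbb E|L_k^{\mathrm{pred}}|=O(h^2)$ and $L_k^{\mathrm{mart}}$ is a conditional martingale increment satisfying $\mathbb E[L_k^{\mathrm{mart}}\mid\mathcal F_{kh}]=0$ and $\mathbb E[|L_k^{\mathrm{mart}}|^2\mid\mathcal F_{kh}]=O(\gamma h^3 d)$, with orthogonal increments across $k$. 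Comparing $(X_{(k+1)h},V_{(k+1)h})$ with $(\mathbf X_{k+1},\mathbf V_{k+1})$ through the auxiliary state $\Psi_h(X_{kh},V_{kh})$, and invoking the contraction-coupling of Section~\ref{sec:dist_coupl} for the Euler--Euler part, one obtains a recursion $a_{k+1}\le(1-ch)a_k+C_\rho\,\mathbb E|L_k|$ with $a_k=\mathbb E[\rho((X_{kh},V_{kh}),(\mathbf X_k,\mathbf V_k))]$. Summing gives, for the predictable part, a geometric series of size $O(h^2)\cdot(ch)^{-1}=O(h/c)$; for the martingale part, rather than the naive triangle bound (which would again yield $O(\sqrt h)$), one exploits orthogonality in $L^2$,
\begin{equation*}
\mathbb E\Big[\Big|\sum_{k=0}^{N-1}(1-ch)^{N-1-k}L_k^{\mathrm{mart}}\Big|\Big]\leq\Big(\sum_{k=0}^{N-1}(1-ch)^{2(N-1-k)}\mathbb E|L_k^{\mathrm{mart}}|^2\Big)^{1/2}=O(h/\sqrt c),
\end{equation*}
and both contributions are $O(h)$ in the limit $N\to\infty$, accounting for the $(1+\gamma(1+2L\gamma^{-2})/c)$ structure in the target bound.

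\emph{Constants and main obstacle.} The $\sqrt d$ and $L\gamma^{-2}$ prefactors emerge from the stationarity moments $\mathbb E_{\mu_\infty}[|V|^2]=d$ and the identity $\mathbb E_{\mu^x}[|\nabla U|^2]=\mathbb E_{\mu^x}[\Delta U]\leq Ld$ (integration by parts on $e^{-U}$), combined with the $L$-Lipschitz bound on $\nabla U$ and the equivalence $\rho\asymp|\cdot|$ with explicit constants. The main technical difficulty is the coupling juggling: the synchronous coupling is essential to expose the martingale structure of $L_k^{\mathrm{mart}}$, whereas the $\rho$-contraction requires the carefully tailored coupling of Section~\ref{sec:dist_coupl}; the auxiliary Euler-step device keeps both operative at every step. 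Tracking the Jacobian of the Langevin drift $\nabla F(X,V)=\begin{pmatrix}0&I\\-\nabla^2 U(X)&-\gamma I\end{pmatrix}$ through the It\^o--Taylor analysis to extract the precise prefactors $\gamma(1+2L\gamma^{-2})$ and $20L\gamma^{-2}\sqrt d$ constitutes the main bookkeeping challenge.
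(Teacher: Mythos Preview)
Your high-level outline---contraction plus local-error analysis---matches the paper's, and your identification of the martingale structure of the Brownian part of the local error as the key to the $O(h)$ (rather than $O(\sqrt h)$) rate is correct. However, the mechanism you propose to exploit that structure does not work.

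The recursion $a_{k+1}\le(1-ch)a_k+C_\rho\,\mathbb E|L_k|$ is an inequality between \emph{scalars}: $a_k=\mathbb E[\rho(\cdot)]$ is a number, and so is $\mathbb E|L_k|$. Iterating it yields $a_N\le C_\rho\sum_{k}(1-ch)^{N-1-k}\mathbb E|L_k|$, again a sum of scalars. There is no random sum $\sum_k(1-ch)^{N-1-k}L_k^{\mathrm{mart}}$ to which Pythagoras in $L^2$ could apply; your displayed orthogonality estimate is therefore groundless. With the recursion as stated you are forced to bound each $\mathbb E|L_k^{\mathrm{mart}}|$ individually, which is $O(h^{3/2}\sqrt d)$, and the geometric sum then gives only $O(\sqrt h\,\sqrt d/c)$. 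To recover a genuine martingale sum you would need a \emph{pathwise} (or $L^2$) contraction of the form $|Y_{k+1}|\le(1-ch)|Y_k|+|L_k|$ as random variables, but Theorem~\ref{thm:contra_EM} gives contraction only in expectation with respect to the specially designed coupling, and no such pointwise bound holds in the non-convex regime.

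The paper circumvents this by \emph{not} applying contraction step by step. Instead it fixes a block of $l=\lceil 1/(h\gamma\max(\alpha,1+\alpha/2))\rceil$ steps, couples the continuous dynamics and the Euler chain \emph{synchronously} over the whole block starting from the same point $(X_0,V_0)\sim\mu_\infty$, and telescopes the position error so that the Brownian contributions appear as $\sum_{i=0}^{k}\int_0^h(B_{ih+s}-B_{ih})\,\rmd s$ \emph{inside a single expectation}. It is here---inside the expectation, with no $(1-ch)$ weights---that the martingale bound (Lemma~\ref{lem:apriori_bounds}) delivers $\sqrt{lh}$ rather than $lh$ scaling. A Gr\"onwall-type recursion on $a_k+b_k$ over the finite block then gives $a_l+b_l\le e\cdot h\cdot 20L\gamma^{-2}\sqrt d$, and only afterwards is the single contraction factor $(1-(1-ch)^l)^{-1}\le 1+\gamma(1+\alpha)/c$ applied. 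In short: the paper keeps the random increments together inside one expectation by delaying the use of contraction to the block level; your step-by-step contraction destroys precisely the structure you need.
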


\begin{proof}
The proof is given in Section~\ref{sec:proofs}.
\end{proof}
Given a probability measure $\nu_0$ on $\mathbb{R}^{2d}$ let $\nu_k$ denote the law of the Euler discretization after $k\in\mathbb{N}$ steps. 
Using the strong accuracy result, we bound the distance between $\nu_k$ and the target measure $\mu_{\infty}$.

\begin{theorem} \label{thm:complexity_guarantee}
    Suppose Assumption~\ref{ass}, \eqref{eq:gamma_cond} and \eqref{eq:condition_h} hold. Then for $k\in\mathbb{N}$,
    \begin{align*}
        \mathcal{W}_{\rho}(\mu_{\infty},\nu_k)& \le h\Big(1+\frac{\gamma(1+ 2L\gamma^{-2})}{c}\Big) 20 L \gamma^{-2} \sqrt{d}
          +e^{-chk}\mathcal{W}_{\rho}(\mu_{h,\infty}, \nu_0),
        \\  \mathcal{W}_{1}(\mu_{\infty},\nu_k)& \le \mathbf{N} h\Big(1+\frac{\gamma(1+ 2L\gamma^{-2})}{c}\Big) 20 L \gamma^{-2} \sqrt{d}+\mathbf{M}e^{-chk}\mathcal{W}_{\rho}(\mu_{h,\infty}, \nu_0),
    \end{align*}  
    where $c$ is given in \eqref{eq:c}, $\mathbf{M}$ and $\mathbf{N}$ are given in \eqref{eq:M}-\eqref{eq:N}. 
\begin{proof}
The proof is given in Section~\ref{sec:proofs}.
\end{proof}
    
\end{theorem}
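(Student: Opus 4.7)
The approach is a straightforward triangle inequality argument combining the strong accuracy estimate of Theorem~\ref{thm:strong_accuracy} with the contraction in $\mathcal{W}_\rho$ from the corollary to Theorem~\ref{thm:contra_EM}. The invariant measure $\mu_{h,\infty}$ of the Euler scheme serves as the natural pivot measure.

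First I would write
\begin{equation*}
\mathcal{W}_{\rho}(\mu_{\infty},\nu_k) \le \mathcal{W}_{\rho}(\mu_{\infty},\mu_{h,\infty}) + \mathcal{W}_{\rho}(\mu_{h,\infty},\nu_k).
\end{equation*}
The first term on the right-hand side is bounded directly by Theorem~\ref{thm:strong_accuracy}, yielding the $h\,(1+\gamma(1+2L\gamma^{-2})/c)\cdot 20L\gamma^{-2}\sqrt{d}$ contribution. For the second term, I invoke the contraction estimate \eqref{eq:contr_k_step} from the corollary to Theorem~\ref{thm:contra_EM}: since $\mu_{h,\infty}$ is the invariant measure of the Euler discretization, starting the scheme from $\mu_{h,\infty}$ keeps the law fixed, so applying \eqref{eq:contr_k_step} to the pair of initial distributions $(\mu_{h,\infty},\nu_0)$ gives
\begin{equation*}
\mathcal{W}_{\rho}(\mu_{h,\infty},\nu_k) \le e^{-c h k}\,\mathcal{W}_{\rho}(\mu_{h,\infty},\nu_0).
\end{equation*}
Summing these two contributions yields the first inequality of the theorem.

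For the second inequality, I use the equivalence of $\rho$ with the Euclidean distance, which is built into the construction of $\rho$ in Section~\ref{sec:dist} and which produces the constants $\mathbf{M}$ and $\mathbf{N}$ in \eqref{eq:M}--\eqref{eq:N}. Concretely, $\mathbf{N}$ is the constant such that $\mathcal{W}_1 \le \mathbf{N}\,\mathcal{W}_\rho$, so the strong accuracy bound is upgraded to $\mathcal{W}_1(\mu_\infty,\mu_{h,\infty}) \le \mathbf{N}\,\mathcal{W}_\rho(\mu_\infty,\mu_{h,\infty})$, while the contraction part becomes $\mathcal{W}_1(\mu_{h,\infty},\nu_k)\le \mathbf{M}\,e^{-chk}\mathcal{W}_\rho(\mu_{h,\infty},\nu_0)$ as in the corollary. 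Adding these two bounds via the triangle inequality gives the claimed estimate.

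There is essentially no obstacle here: the result is a clean corollary of the two preceding theorems and the metric equivalence underlying the definition of $\rho$. All of the heavy lifting (the coupling construction, the twisted metric, the local-then-global contraction argument, and the strong order one bias analysis) has already been performed in Theorems~\ref{thm:contra_EM} and \ref{thm:strong_accuracy}. The only minor point to verify is that the contraction estimate may indeed be applied to the pair $(\mu_{h,\infty},\nu_0)$, which amounts to the stationarity of $\mu_{h,\infty}$ under one Euler step, a property that is an immediate consequence of Banach's fixed point argument used to establish the existence of $\mu_{h,\infty}$ in the preceding corollary.
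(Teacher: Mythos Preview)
Your proposal is correct and follows exactly the same route as the paper: triangle inequality with pivot $\mu_{h,\infty}$, Theorem~\ref{thm:strong_accuracy} for the bias term, the contraction corollary for the remaining term, and then the equivalence \eqref{eq:dist_equiv} to pass to $\mathcal{W}_1$. There is nothing to add.
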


\begin{remark}[Complexity guarantees] \label{remark:complexity_euler} To obtain an $\varepsilon$-accuracy in $\mathcal{W}_1$ distance, we have to choose $h\propto \varepsilon/ \sqrt{d} $ and the number of steps $k$ of order $k\propto \log(\Delta(0)/\varepsilon)/(ch)$. Here $\Delta(0)=\mathcal{W}_{1} (\mu_{h,\infty}, \nu_0)$. Since in each step there is one gradient evaluation, the number of gradient evaluation for $\varepsilon$-accuracy is of order $\sqrt{d}/\epsilon$.
\end{remark}

\begin{remark}[Particle model and propagation of chaos]\label{rem:particle_comp}
Bounds on the strong accuracy can also be considered for the particle model (see Remark~\ref{rem:particle}). Note that due to the fact that a normalized distance $\rho_N$ is considered, the bound between the target measure $\mu_{\infty}^N$ of the particle system and the law after $k$-steps of the discretized Langevin dynamics is independent of the particle number $N$. Further, combining this bound with the propagation of chaos result given in \cite[Theorem 17]{schuh2022global}, it holds under the assumptions in Theorem~\ref{thm:complexity_guarantee} and a smallness assumption for the Lipschitz constant of the gradient of the interaction potential in the particle system that the distance between the law after $k$-steps of the discretized Langevin dynamics with $N$ particles and the stationary measure $\mu_*$ of the limit process given by 
\begin{align} \label{eq:limitmeas}
    \mu_* (\rmd x) \propto \exp\left(-V(x)-\int_{\mathbb R^d}W(x-y)\mu_*(\rmd y)\right) \rmd x,
\end{align}
is bounded by
\begin{align*}
    \mathcal{W}_{\ell^N_1}(\mu_k^N, \mu_*^{\otimes N})\le {C} \Big(e^{-chk} \mathcal{W}_1(\mu_0, \mu_*) + \sqrt{d} h + N^{-1/2}\Big).
\end{align*}
Here, the constant $ C>0$ is independent of $N$, $d$ and $h$.    

\end{remark}

\subsection{Contraction for the BU and UBU scheme}

Consider the Markov chain $(\mathbf{X}_k, \mathbf{V}_k)_{k\in\mathbb{N}}$ generated by the BU discretization scheme.
\begin{theorem}[Contraction for the BU discretization scheme] \label{thm:BU_contr}
    Let $U$ satisfy \Cref{ass}. Let $(x,v), (x',v')\in \mathbb{R}^d$. Let $\gamma >0$ and $h>0$ satisfy
    \begin{align} 
        &\gamma\ge \sqrt{13L_G^2/\kappa} \qquad \text{and} \label{eq:gamma_BU}
        \\  & L\gamma^{-1} h \le \min\Big(\frac{1}{8LR_1^2}, \frac{1}{256\cdot 75(2L\gamma^{-2}+1) }, \frac{L\gamma^{-2}}{15},\frac{L}{55 L_K} \Big). \label{eq:condition_h_BU} 
    \end{align}
    Then there exists a distance $\rho:\mathbb{R}^{2d}\times \mathbb{R}^{2d}\to[0,\infty)$ such that for any $k\in \mathbb{N}$ it holds 
    \begin{align} \label{eq:contr_BU}
        \mathbb{E}[\rho((\mathbf{X}_k,\mathbf{V}_k),(\mathbf{X}_k',\mathbf{V}_k'))]\le (1-ch)^k \rho((x,v), (x',v')) ,
    \end{align}
    where the contraction rate $c$ is given by 
    \begin{align} \label{eq:c_BU}
        c=\min\Big(f'(R_1)\frac{7\epsilon \kappa \gamma^{-1} }{96}\mathcal{E},f'(R_1)\frac{7\epsilon \gamma }{256}\mathcal{E},f'(R_1) e^{-\gamma h}\frac{\gamma}{16},f'(R_1) e^{-\gamma h}\frac{\gamma \alpha}{4},\frac{9\hat{c}}{640},\frac{\hat{c}}{32(4\alpha +1)}  \Big).
    \end{align}
    The construction of the distance function $\rho$ and the constants $R_1$, $\epsilon$, $\mathcal{E}$ and $\alpha$ are independent of $d$ and $h$ and are given in Section~\ref{sec:dist}.
\end{theorem}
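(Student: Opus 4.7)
The plan is to follow exactly the architecture of the proof of Theorem~\ref{thm:contra_EM}: construct an appropriate coupling, establish a one-step contraction of $\rho$ in two regimes (close to and far from the origin), and glue them on the transition region of Figure~\ref{fig:metric}. The metric $\rho$, the auxiliary variable $q_k=Z_k+\gamma^{-1}W_k$, the parameters $R_1,\epsilon,\alpha,\hat c,\mathcal E$ and the concave function $f$ can all be taken identical to the Euler case; only the one-step calculations must be redone with the BU update.

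\medskip

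\textbf{Coupling.} Driving the two copies of the BU chain by $(\xi_k^{(1)},\xi_k^{(2)})$ and $(\xi_k^{(1)'},\xi_k^{(2)'})$, I couple $\xi^{(2)}$ synchronously and $\xi^{(1)}$ via the Bou-Rabee--Eberle transition coupling of \cite{bou2020coupling,chak2023reflection} along the direction $q_k$, exactly as for $\xi$ in the Euler analysis. Writing $\eta=e^{-\gamma h}$ and $\Delta\nabla U_k=\nabla U(\mathbf X_k)-\nabla U(\mathbf X_k')$, the BU step sends the difference $(Z_k,W_k)$ to $W_{k+1}=\eta(W_k-h\Delta\nabla U_k)+\sqrt{2\gamma}\,\Delta\mathcal Z^{(2)}_{k+1}$ and $Z_{k+1}=Z_k+\gamma^{-1}(1-\eta)(W_k-h\Delta\nabla U_k)+\sqrt{2/\gamma}\bigl(\Delta\mathcal Z^{(1)}_{k+1}-\Delta\mathcal Z^{(2)}_{k+1}\bigr)$, and by construction both noise differences are scalar multiples of the $q_k$-component of $\xi_{k+1}^{(1)}-\xi_{k+1}^{(1)'}$.

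\medskip

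\textbf{Two local one-step inequalities.} (a) In the region $\{r_s<R_1\}$, with $r_s=\alpha|Z|+|q|$, I establish $\mathbb E[f(r_s(k+1))]-f(r_s(k))\le -c_s h\,f(r_s(k))$ with $c_s\asymp \hat c/(4\alpha+1)$. The mechanism is the same as in Theorem~\ref{thm:contra_EM}: concavity of $f$ combined with the reflective component of the $\xi^{(1)}$-coupling produces negative second-order drift away from $\{q=0\}$, while on $\{q=0\}$ the deterministic BU map, together with the splitting $U=\tfrac12 x^TKx+G$ and the step-size condition~\eqref{eq:condition_h_BU}, pushes $r_s$ downward. (b) Outside $\{r_s<R_1\}$ the transition coupling collapses to the synchronous one, all noise differences vanish, and the question becomes a contraction of the twisted quadratic norm $r_l^2$ of \cite{schuh2022global} for the deterministic BU map. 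Strong convexity of $K$, monotonicity of $\nabla G$ on $\{|Z|>R\}$ and Lipschitz bounds with constants $L_K,L_G$ yield local contraction of $r_l^2$ with the rates in~\eqref{eq:c_BU}; the exact OU damping factor $\eta$ is what produces the $e^{-\gamma h}$ visible in the middle two entries of the minimum.

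\medskip

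The two local inequalities are glued through $\rho$ exactly as in Section~\ref{sec:dist}, and iterating the resulting one-step contraction produces~\eqref{eq:contr_BU}. \textbf{The main obstacle} compared with Theorem~\ref{thm:contra_EM} lies in the far-region estimate: because the B kick precedes the exact U step, the drift of $r_l^2$ picks up cross-terms of the form $\gamma^{-1}(1-\eta)\langle KZ_k,\,W_k-h\Delta\nabla U_k\rangle$ and $\eta$-weighted squares of $W_k-h\Delta\nabla U_k$, so I must expand $\eta=1-\gamma h+O((\gamma h)^2)$ and $1-\eta=\gamma h-O((\gamma h)^2)$ and verify that the $O(h^2)$ remainders coming from the exact OU integration are absorbed by the leading contraction term. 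This is precisely what forces the slightly stronger constants in~\eqref{eq:gamma_BU}--\eqref{eq:condition_h_BU} (compared with their Euler analogues) and leaves the factor $e^{-\gamma h}$ in~\eqref{eq:c_BU}.
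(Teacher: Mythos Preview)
Your approach matches the paper's: same coupling (synchronous $\xi^{(2)}$, Bou-Rabee--Eberle on $\xi^{(1)}$ along $q_k$), same two-regime split via $\rho$, and the far-region part is exactly the BU analogue of Proposition~\ref{thm:case1} (the paper's Proposition~\ref{prop:case1_BU}).

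One correction, though: you attribute the $e^{-\gamma h}$ factor in~\eqref{eq:c_BU} to the far-region $r_l$ estimate, but in the paper it arises in the \emph{close} region, in the sub-case $|Z_k|\ge 4|q_k|$. There the BU update gives $|Z_{k+1}|\le \eta|Z_k|+\ldots$ with $\eta=e^{-\gamma h}$, and after absorbing the $|q_k|$ terms one obtains $\mathbb E[\rho_{k+1}-\rho_k]\le -f'(R_1)\,\eta\min(\gamma h/16,\gamma h\alpha/4)\rho_k$, which is precisely the third and fourth entries of the minimum. The far-region Proposition~\ref{prop:case1_BU} yields rate $\tfrac78\tau\gamma h$ with $\tau=\min(\kappa/(6\gamma^2),1/16)$ and produces the \emph{first two} entries, with no $e^{-\gamma h}$. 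Your close-region sketch also elides the three-way split ($|Z_k|\ge 4|q_k|$; $|Z_k|<4|q_k|$ with $|q_k|\ge\sqrt{2\gamma^{-1}h}$; $|q_k|<\sqrt{2\gamma^{-1}h}$) that is needed to control the noise contribution $\sqrt{2\gamma^{-1}h}\,|\hat q_k+\Xi_{k+1}|$ via the concavity of $f$; the last of these sub-cases is where the constant $\hat c/(128(4\alpha+1))$ in~\eqref{eq:c_BU} comes from.
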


\begin{proof}
    The proof is given in \Cref{sec:proofs_UBU}.
\end{proof}

Next, consider the Markov chain $(\mathbf{X}_k, \mathbf{V}_k)_{k\in\mathbb{N}}$ given by the UBU discretization scheme.
\begin{theorem}[Contraction for the UBU discretization scheme] \label{thm:UBU_contr}
    Let $U$ satisfy \Cref{ass}. Let $(x,v), (x',v')\in \mathbb{R}^d$. Let $h>0$ and $\gamma>0$ satisfy \eqref{eq:condition_h_BU} and  \eqref{eq:gamma_BU}.
    Then for any $k\in \mathbb{N}$ it holds 
    \begin{align} \label{eq:contr_UBU}
        \mathbb{E}[\rho((\mathbf{X}_k,\mathbf{V}_k),(\mathbf{X}_k',\mathbf{V}_k'))]\le \mathbf{C}(1-ch)^k \rho((x,v), (x',v')),
    \end{align}
    where the contraction rate $c$ is given by \eqref{eq:c_BU} and the additional constant $\mathbf{C}>0$ is given by
    \begin{align} \label{eq:contr_UBU_C}
        \mathbf{C}= \Big(1+\frac{\gamma h}{16}\Big)\max\Big(\Big(1+\alpha\gamma h/2\Big)^2, 1+\gamma h \max(1, L_K\gamma^{-2})\Big).
    \end{align}
\end{theorem}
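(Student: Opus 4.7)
I would follow the approach sketched at the end of \Cref{sec:prelim} and reduce the UBU contraction to the already-proved BU contraction of \Cref{thm:BU_contr}. The identity
\[
\bigl(\mathcal{U}_{1/2}\mathcal{B}\mathcal{U}_{1/2}\bigr)^{k}=\mathcal{U}_{1/2}\,(\mathcal{B}\mathcal{U})^{k-1}\,\mathcal{B}\mathcal{U}_{1/2}
\]
holds because two consecutive half–$\mathcal{U}$ maps, with synchronously coupled Gaussian increments, combine in distribution into a single full $\mathcal{U}$ map (the OU semigroup property). Hence, to upper bound $\mathbb{E}[\rho(\cdot,\cdot)]$ after $k$ UBU iterations, I would first apply the one-step estimate for $\mathcal{B}\mathcal{U}_{1/2}$ (initial half–$\mathcal{U}$ plus $\mathcal{B}$), then invoke \Cref{thm:BU_contr} on the $(k-1)$ middle $\mathcal{B}\mathcal{U}$ pairs to gain $(1-ch)^{k-1}$, then apply the one-step estimate for the trailing $\mathcal{U}_{1/2}$. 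The prefactor from the two outer pieces will match $\mathbf{C}$ in \eqref{eq:contr_UBU_C}.

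\textbf{One-step estimates.} With synchronous coupling of the half–$\mathcal{U}$ noise, the difference process evolves as $Z\mapsto Z+\gamma^{-1}(1-e^{-\gamma h/2})W$ and $W\mapsto e^{-\gamma h/2}W$. The crucial observation is that $q:=Z+\gamma^{-1}W$ is \emph{exactly preserved} by $\mathcal{U}_{1/2}$, so $|q|$ does not change; using $1-e^{-\gamma h/2}\le \gamma h/2$ one obtains $|Z^{\text{new}}|\le (1+\gamma h/2)|Z|+(\gamma h/2)|q|$, whence
\[
r_{s}^{\text{new}}\le (1+\alpha\gamma h/2)\,r_{s}.
\]
A parallel computation for the twisted quadratic norm $r_{l}$ of Section~\ref{sec:dist}, tracking the $Z$--$W$ cross term weighted by $K$, yields $r_{l}^{\text{new}}\le \sqrt{1+\gamma h\max(1,L_{K}\gamma^{-2})}\,r_{l}$. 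Consequently $\rho^{\text{new}}\le C_{U}\rho$ with $C_{U}=\max\bigl(1+\alpha\gamma h/2,\,\sqrt{1+\gamma h\max(1,L_{K}\gamma^{-2})}\bigr)$. For $\mathcal{B}$ the map fixes $Z$ and sends $W$ to $W-h(\nabla U(x)-\nabla U(x'))$, and the $L$-Lipschitzness of $\nabla U$ gives $|q^{\text{new}}|\le |q|+hL\gamma^{-1}|Z|$; under \eqref{eq:condition_h_BU} this translates to $\rho^{\text{new}}\le (1+\gamma h/16)\rho$ for both the $r_{s}$ and $r_{l}$ components.

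\textbf{Concatenation.} Taking conditional expectations step by step and using the tower property, together with \Cref{thm:BU_contr} on the middle block, gives
\[
\mathbb{E}\bigl[\rho\bigl((\mathbf{X}_{k},\mathbf{V}_{k}),(\mathbf{X}_{k}',\mathbf{V}_{k}')\bigr)\bigr]\le C_{U}\cdot (1-ch)^{k-1}\cdot (1+\gamma h/16)\,C_{U}\,\rho((x,v),(x',v')).
\]
The product $C_{U}^{2}(1+\gamma h/16)$ coincides with $\mathbf{C}$ in \eqref{eq:contr_UBU_C}. The discrepancy between the exponents $k$ and $k-1$ is handled by noting that under \eqref{eq:condition_h_BU} the quantity $ch$ is sufficiently small for $(1-ch)^{-1}$ to be absorbed (up to a benign correction in the $\max$) into the definition of $\mathbf{C}$, giving the stated bound $\mathbf{C}(1-ch)^{k}\rho$.

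\textbf{Main obstacle.} The algebra for $r_{s}$ is immediate thanks to the preservation of $q$ by $\mathcal{U}_{1/2}$. The only real technical step is verifying the analogous one-step bound for the twisted $2$-norm $r_{l}$, where one must check that the $K$-weighted components and $Z$--$W$ cross terms grow by no more than the claimed factor $\sqrt{1+\gamma h\max(1,L_{K}\gamma^{-2})}$ under a single half–$\mathcal{U}$, and that the stronger condition $L\gamma^{-1}h \le L/(55L_{K})$ in \eqref{eq:condition_h_BU} suffices to keep the cross terms under control. Everything else reduces to Lipschitz bookkeeping and the elementary inequality $1-e^{-\gamma h/2}\le \gamma h/2$.
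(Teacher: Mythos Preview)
Your overall strategy---reduce to the BU contraction via the identity $(\mathcal{U}_{1/2}\mathcal{B}\mathcal{U}_{1/2})^{k}=\mathcal{U}_{1/2}(\mathcal{B}\mathcal{U})^{k-1}\mathcal{B}\mathcal{U}_{1/2}$ and control the two boundary half-steps with synchronous coupling---is exactly the paper's approach, and your $r_{s}$ and $r_{l}$ bounds for the pure $\mathcal{U}_{1/2}$ step are correct.

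The gap is in the bookkeeping around the $\mathcal{B}$ step and the factor $(1+\gamma h/16)$. You attribute $(1+\gamma h/16)$ to $\mathcal{B}$, but the $\mathcal{B}$ step alone gives in the $r_{s}$ case $|q^{\text{new}}|\le |q|+hL\gamma^{-1}|Z|=|q|+\tfrac{\alpha\gamma h}{2}|Z|$, hence $r_{s}^{\text{new}}\le (1+\gamma h/2)\,r_{s}$, not $(1+\gamma h/16)\,r_{s}$. You then \emph{also} invoke a separate $(1-ch)^{-1}$ to pass from exponent $k-1$ to $k$. Either way you end up with a prefactor strictly larger than the stated $\mathbf{C}$.

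The paper avoids this by never isolating $\mathcal{B}$: it bounds $\mathcal{B}\mathcal{U}_{1/2}$ as a \emph{single} map and shows it carries the \emph{same} expansion factor as $\mathcal{U}_{1/2}$ alone. In the $r_{s}$ case the key identity is
\[
\alpha|Z_{BU}|+|q_{BU}|\le \alpha(1+\gamma h/2)e^{-\gamma h/2}|z|+(1+\alpha\gamma h/2)|q|\le (1+\alpha\gamma h/2)\,r_{s},
\]
since $(1+\gamma h/2)e^{-\gamma h/2}\le 1$: the exponential decay from the half-$\mathcal{U}$ exactly compensates the $\mathcal{B}$ contribution. In the $r_{l}$ case the paper factors the $\mathcal{B}\mathcal{U}_{1/2}$ transition matrix as $(I+P_{3})P_{BU}$, bounds $(I+P_{3})^{T}M(I+P_{3})\preceq(1+\gamma h\max(1,L_{K}\gamma^{-2}))M$, and then uses the full BU contraction $P_{BU}^{T}MP_{BU}\preceq M$ from Proposition~\ref{prop:case1_BU}. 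With $\mathcal{B}$ absorbed, the only remaining slack is converting $(1-ch)^{k-1}$ to $(1-ch)^{k}$, and it is \emph{this} $(1-ch)^{-1}$ that produces the $(1+\gamma h/16)$ in $\mathbf{C}$ via the bound $c\le e^{-\gamma h}\gamma/16$ in \eqref{eq:c_BU}. So your argument is structurally right but needs this absorption trick to recover the exact constant in \eqref{eq:contr_UBU_C}; as written it would only yield a (harmless) extra factor of order $(1+\gamma h/2)$.
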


\begin{proof}
    The proof is given in \Cref{sec:proofs_UBU}.
\end{proof}

Similarly to the Euler scheme, we obtain convergence in $L^1$-Wasserstein distance for the UBU scheme due to the equivalence of the distance $\rho$ and the Euclidean distance.
Consider two probability measures $\mu_0$, $\nu_0$ on $\mathbb{R}^{2d}$. Let $(\mathbf{X}_0,\mathbf{V}_0)$ and $(\mathbf{X}_0',\mathbf{V}_0')$ be distributed with respect to $\mu_0$ and $\nu_0$, respectively.
Then, we denote by $\mu_k=\Law(\mathbf{X}_k,\mathbf{V}_k)$ and $\nu_k=\Law(\mathbf{X}_k',\mathbf{V}_k')$ the laws of the discretised kinetic Langevin sampler after $k\in\mathbb{N}$ UBU-steps, respectively. 

\begin{corollary}[Convergence in $L^1$-Wasserstein distance]
Suppose Assumption~\ref{ass}, \eqref{eq:gamma_BU} and \eqref{eq:condition_h_BU} hold.
Then, for $k\in\mathbb{N}$
\begin{align*}
    & \mathcal{W}_{\rho}(\mu_k, \nu_k)\le \mathbf{C} \exp(-ckh)  \mathcal{W}_{\rho}(\mu_0, \nu_0),
    \\ & \mathcal{W}_{1}(\mu_k, \nu_k)\le \mathbf{C}\mathbf{M}\exp(-ckh)  \mathcal{W}_{1}(\mu_0, \nu_0) ,
\end{align*}
where $c$ is given by \eqref{eq:c}, $\mathbf{C}>0$ is given in \eqref{eq:contr_UBU_C}, and $\mathbf{M}$ is given in \eqref{eq:M}. Moreover, existence of a unique invariant measure $\mu_{h,\infty}$ and convergence towards its holds, i.e., for $k \in \mathbb{N}$
\begin{align*}
    & \mathcal{W}_{\rho}(\mu_k, \mu_{h,\infty})\le \mathbf{C}\exp(-ckh)  \mathcal{W}_{\rho}(\mu_0, \mu_{h,\infty}),
    \\ & \mathcal{W}_{1}(\mu_k, \mu_{h,\infty})\le \mathbf{C}\mathbf{M}\exp(-ckh)  \mathcal{W}_{1}(\mu_0, \mu_{h,\infty}).
\end{align*}
    
\end{corollary}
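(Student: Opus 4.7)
The plan is to deduce the corollary from the pointwise contraction established in Theorem~\ref{thm:UBU_contr} using a standard coupling-plus-equivalence-of-metrics argument, and to extract existence of the invariant measure from Banach's fixed point theorem applied to an iterate of the UBU kernel. First I would pass from the pointwise bound to $\mathcal{W}_\rho$: fix an $\varepsilon$-optimal coupling $\pi$ of $(\mu_0,\nu_0)$ for $\mathcal{W}_\rho$, sample $((X_0,V_0),(X_0',V_0'))\sim \pi$, and evolve the two copies with the coupled families $(\xi^{(i)}_k,\xi^{(i)\prime}_k)_{k,i}$ constructed in Section~\ref{sec:dist_coupl} (the same coupling that is used for the BU analysis, which then drives the UBU scheme through the decomposition $(\mathcal{U}_{1/2}\mathcal{B}\mathcal{U}_{1/2})^k=\mathcal{U}_{1/2}(\mathcal{B}\mathcal{U})^{k-1}(\mathcal{B}\mathcal{U}_{1/2})$). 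The resulting pair is a coupling of $(\mu_k,\nu_k)$, so after taking expectations and using $(1-ch)^k\le e^{-ckh}$ together with Theorem~\ref{thm:UBU_contr}, letting $\varepsilon\downarrow 0$ yields $\mathcal{W}_\rho(\mu_k,\nu_k)\le \mathbf{C}e^{-ckh}\mathcal{W}_\rho(\mu_0,\nu_0)$.

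The second inequality is then an immediate consequence of the equivalence between $\rho$ and the Euclidean distance stated in Section~\ref{sec:dist}: the constant $\mathbf{M}$ from \eqref{eq:M} is precisely the comparison constant relating $\mathcal{W}_1$ and $\mathcal{W}_\rho$. Applying the comparison on both sides of the $\mathcal{W}_\rho$ bound produces $\mathcal{W}_1(\mu_k,\nu_k)\le \mathbf{C}\mathbf{M}e^{-ckh}\mathcal{W}_1(\mu_0,\nu_0)$.

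For existence and uniqueness of an invariant measure, let $P_h$ be the one-step transition kernel of the UBU scheme. The contraction factor $\mathbf{C}$ in Theorem~\ref{thm:UBU_contr} is a fixed constant, possibly exceeding $1$, so $P_h$ itself need not be a strict $\mathcal{W}_\rho$-contraction. However, since $(1-ch)^k\to 0$, one can choose $k_0\in\mathbb{N}$ large enough that $\mathbf{C}(1-ch)^{k_0}<1$. The iterate $P_h^{k_0}$ is then a strict contraction on the space of probability measures on $\mathbb{R}^{2d}$ with finite $\rho$-moment, which is complete with respect to $\mathcal{W}_\rho$ because $\rho$ is equivalent to the Euclidean distance. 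Banach's fixed point theorem yields a unique fixed point $\mu_{h,\infty}$ of $P_h^{k_0}$; applying $P_h$ to the identity $\mu_{h,\infty}=\mu_{h,\infty}P_h^{k_0}$ and using uniqueness shows $\mu_{h,\infty}P_h=\mu_{h,\infty}$, so $\mu_{h,\infty}$ is the unique invariant measure of $P_h$ as well. Specialising the first two inequalities to $\nu_0=\mu_{h,\infty}$ delivers the convergence statements. The only subtlety worth flagging is this passage to an iterate, which is needed precisely because $\mathbf{C}>1$; apart from that the proof is a direct transcription of the pointwise result, and no further estimates are required.
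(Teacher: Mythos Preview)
Your proposal is correct and follows the same approach the paper indicates (it gives no separate proof for this corollary, relying on the analogous Euler corollary, whose proof reads: ``The results follow immediately from Theorem~\ref{thm:contra_EM} and the fact that $\rho$ is equivalent to the Euclidean distance. Existence of a unique invariant measure holds by Banach fixed point theorem.''). Your treatment is in fact more careful than the paper's, since you explicitly handle the subtlety that $\mathbf{C}>1$ forces passage to an iterate $P_h^{k_0}$ before invoking Banach's fixed point theorem---a point the paper leaves implicit.
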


\begin{remark}
    As for the Euler scheme, we observe that in the strong convex case, i.e., $R=0$, the rate $c$ reduces to $c=\min(\kappa \gamma^{-1}/24, \gamma/64)$ and the constant $\mathbf{C}$ is given by $\mathbf{C}=1+2\gamma h \max(1, L_K \gamma^{-2})$. Assuming that $L_G$ satisfies $L_G\le \sqrt{3/104}\kappa$ and choosing $\gamma=\sqrt{3/8\kappa}$, we obtain a contraction rate of order $\sqrt{\kappa}$. Note that compared to the convergence result of the continuous Langevin dynamics \cite[Remark 2]{schuh2022global}, we only loose a constant prefactor in the contraction rate and in the condition for the smallness of the perturbation to the Gaussian case.
\end{remark}

\subsection{Assumptions for error analysis}

Assumption \ref{ass} is the only condition necessary for the convergence analysis and order one bias estimates for the numerical schemes we consider. However to establish higher-order bounds with an improved dimension dependence we introduce Assumptions \ref{ass:hessian_lip} and \ref{ass:strong_hessian_lip} and some motivating examples. We will state the main results in the paper in cases where each of the Assumptions is satisfied.

\begin{assumption}\label{ass:hessian_lip}
The potential $U:\mathbb{R}^{d} \to \mathbb{R}$ is three times continuously differentiable and there exists $L_{1} > 0$ such that for all $x,y \in \mathbb{R}^{d}$,
\[
\|\nabla^{2}U(x) - \nabla^{2}U(y)\| \leq L_{1}\|x-y\|,
\]
this implies that
\[
\|\nabla^{3}U(x)[v,v']\| \leq L_{1}\|v\|\|v'\|,
\]
which was used in \cite{sanz2021wasserstein}.
\end{assumption}

Assumption \ref{ass:hessian_lip} is not strong enough to achieve an improved dimension dependence, which is observed in many applications of interest (see \cite{chada2023unbiased}). A stronger assumption can be used, specifically, the strongly Hessian Lipschitz property introduced in \cite{chen2023does} and used in \cite{paulin2024}, which uses the following tensor norm.
\begin{definition}
    For $A \in \mathbb{R}^{d \times d \times d}$, let us define
    \[
    \|A\|_{\{1,2\}\{3\}} = \sup_{x\in \mathbb{R}^{d\times d}, y\in \mathbb{R}^d}\left\{\left.\sum^{d}_{i,j,k=1}A_{ijk}x_{ij}y_{k} \right| \sum_{i,j=1}^{d}x^{2}_{ij}\leq 1, \sum^{d}_{k=1}y^{2}_{k}\leq 1\right\}.
    \]
\end{definition}

\begin{assumption}\label{ass:strong_hessian_lip}
$U:\mathbb{R}^{d} \to \mathbb{R}$ is three times continuously differentiable and strongly Hessian Lipschitz if there exists a $L^{s}_{1} > 0$ such that
    \[
    \|\nabla^{3}U(x)\|_{\{1,2\}\{3\}} \leq L^{s}_{1}
    \]
    for all $x \in \mathbb{R}^{d}$.    
\end{assumption}
It is easy to show that Assumption \ref{ass:hessian_lip} is equivalent to a uniform bound on the matrix norm defined by
\[
\|\nabla^{3}U(x)\|_{\{1\},\{2\},\{3\}} :=  \sup\left\{\left.\sum^{d}_{i,j,k=1}[\nabla^{3}U(x)]_{ijk}x_{i}y_{j}z_{k} \right| \sum_{i}x_i^2\leq 1,\sum_{j}y_j^2\le 1, \sum_{k}z_k^2\le 1\right\}
\]
for all $x \in \mathbb{R}^{d}$. Due to \cite[Lemma 8]{paulin2024} we have the following equivalency relationship of the norms
\[
\|\cdot\|_{\{1\},\{2\},\{3\}} \leq \|\cdot\|_{\{1,2\},\{3\}} \leq \sqrt{d}\|\cdot\|_{\{1\},\{2\},\{3\}}.
\]
\begin{remark}
    [Examples]
    It is easy to show that Assumption \ref{ass:strong_hessian_lip} is satisfied without additional dimension dependency for product distributions as the tensor $\nabla^{3}U(\cdot)$ is diagonal.

    In \cite[Section 6]{chen2023does} they introduce a wide range of applications which have a small strongly Hessian Lipschitz constant including Bayesian statistical models such as Bayesian ridge regression and generalised linear models. They also show logistic regression problems and two-layer neural networks satisfy Assumption \ref{ass:strong_hessian_lip} with a small constant. An explicit estimate of the constant for Bayesian multinomial regression found in \cite[Lemma H.6]{chada2023unbiased}. The two-layer neural network problem is of particular interest as an application of interacting particle system-based methods (see \cite{mei2018mean,Hu2021,bou2023nonlinear}). 

    In the case of interacting particle systems, examples of interaction potential taken from Molecular dynamics problems which satisfy Assumption \ref{ass:hessian_lip} are the Morse potential and the harmonic bonding potential (see \cite[Chapter 1]{leimkuhler2015molecular}).
\end{remark}

\subsection{Complexity guarantees for the UBU scheme}
Under the respective assumptions we bound the distance between the invariant measure $\mu_{h,\infty}$ of the UBU scheme and the target measure $\mu_{\infty}$. 

\begin{theorem}\label{thm:compl_UBU}
    Let $U$ satisfy Assumption \ref{ass} and let $h>0$ satisfy \eqref{eq:condition_h_BU} and $\gamma > 0$ satisfy \eqref{eq:gamma_BU} then
    \begin{align*}
        &\mathcal{W}_{\rho}(\mu_{\infty},\mu_{h,\infty})\leq \frac{15\mathbf{C}h\sqrt{d}}{c}\left(\gamma^{-1}L + \alpha (5L + \gamma L^{1/2})h\right),
    \end{align*}
    if $U$ satisfies additionally Assumption \ref{ass:hessian_lip} then
    \begin{align*}
        &\mathcal{W}_{\rho}(\mu_{\infty},\mu_{h,\infty})\leq \frac{12{\mathbf{C}}h^{2}\sqrt{d}}{c}\left(2\gamma^{-1}(\sqrt{3}L_{1}\sqrt{d}+L^{3/2} + \gamma L) + \alpha\left(5L + \gamma L^{1/2}\right)\right),
    \end{align*}
    and further if $U$ satisfies Assumption \ref{ass:strong_hessian_lip} then
    \begin{align*}
        &\mathcal{W}_{\rho}(\mu_{\infty},\mu_{h,\infty})\leq \frac{12{\mathbf{C}}h^{2}\sqrt{d}}{c}\left(2\gamma^{-1}(\sqrt{3}L^{s}_{1}+L^{3/2} + \gamma L) + \alpha\left(5L + \gamma L^{1/2}\right)\right),
    \end{align*}
    for distance function $\rho$ given in Section \ref{sec:dist}, contraction rate $c$ defined by \eqref{eq:c_BU} and preconstant $\mathbf{C}$ defined by \eqref{eq:contr_UBU_C}.
\end{theorem}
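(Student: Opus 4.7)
The plan is to reduce the stationary bias $\mathcal{W}_{\rho}(\mu_{\infty},\mu_{h,\infty})$ to a single-step local error and then amplify that local error by the UBU contraction of \Cref{thm:UBU_contr}. Writing $P_h^{\mathrm{cont}}$ for the continuous Langevin semigroup at time $h$ and $P_h^{\mathrm{UBU}}$ for one UBU step, I would exploit that $\mu_\infty$ is $P_h^{\mathrm{cont}}$-invariant to telescope
\begin{align*}
\mathcal{W}_{\rho}\bigl(\mu_\infty,(P_h^{\mathrm{UBU}})^k\mu_\infty\bigr)
&=\mathcal{W}_{\rho}\bigl((P_h^{\mathrm{cont}})^{k}\mu_\infty,(P_h^{\mathrm{UBU}})^k\mu_\infty\bigr)\\
&\le\sum_{j=0}^{k-1}\mathcal{W}_{\rho}\!\left((P_h^{\mathrm{UBU}})^{j}(P_h^{\mathrm{cont}})^{k-j}\mu_\infty,\,(P_h^{\mathrm{UBU}})^{j+1}(P_h^{\mathrm{cont}})^{k-j-1}\mu_\infty\right),
\end{align*}
apply \Cref{thm:UBU_contr} to pull $(P_h^{\mathrm{UBU}})^j$ out of each summand, and use $(P_h^{\mathrm{cont}})^{k-j-1}\mu_\infty=\mu_\infty$ to reduce the $j$-th term to $\mathbf{C}(1-ch)^{j}\mathcal{W}_{\rho}(P_h^{\mathrm{cont}}\mu_\infty,P_h^{\mathrm{UBU}}\mu_\infty)$. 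Summing the geometric series, absorbing the tail via $\mathcal{W}_{\rho}((P_h^{\mathrm{UBU}})^k\mu_\infty,\mu_{h,\infty})\le\mathbf{C}(1-ch)^k\mathcal{W}_{\rho}(\mu_\infty,\mu_{h,\infty})$, and letting $k\to\infty$ then yields
\begin{align*}
\mathcal{W}_{\rho}(\mu_\infty,\mu_{h,\infty})\le\frac{\mathbf{C}}{ch}\,\mathcal{W}_{\rho}(P_h^{\mathrm{cont}}\mu_\infty,P_h^{\mathrm{UBU}}\mu_\infty).
\end{align*}
Finiteness of the left-hand side, which is needed to absorb the tail, follows from the Gaussian-type tails of $\mu_\infty$ under \Cref{ass} together with the integrability of $\mu_{h,\infty}$.

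The one-step bias $\mathcal{W}_{\rho}(P_h^{\mathrm{cont}}\mu_\infty,P_h^{\mathrm{UBU}}\mu_\infty)$ I would estimate by a synchronous coupling in which both processes start at $(x,v)\sim\mu_\infty$ and are driven by one Brownian motion, with the UBU increments $\xi^{(1)},\xi^{(2)}$ from \eqref{eq:Z12def} chosen so that each $\mathcal{U}$ half-step matches the exact Ornstein--Uhlenbeck flow pathwise. The diffusion integrals then cancel, and the error reduces entirely to the mismatch between the continuous force integral $\int_0^h e^{-\gamma(h-s)}\nabla U(X_s)\,ds$ and the single kick $he^{-\gamma h/2}\nabla U(X_{h/2}^{\sharp})$ of the $\mathcal{B}$-step, where $X_{h/2}^{\sharp}$ is the exact $\mathcal{U}$-flow position at time $h/2$. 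Using equivalence of $\rho$ with the Euclidean metric, I would translate the estimate into a bound on $\alpha|X_h^{\mathrm{cont}}-X_h^{\mathrm{UBU}}|+|(X_h^{\mathrm{cont}}-X_h^{\mathrm{UBU}})+\gamma^{-1}(V_h^{\mathrm{cont}}-V_h^{\mathrm{UBU}})|$, which is then computed assumption by assumption.

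Under \Cref{ass} alone I would bound $|\nabla U(X_s)-\nabla U(X_{h/2}^{\sharp})|\le L|X_s-X_{h/2}^{\sharp}|$, control the intermediate displacements by $Ch(|v|+\sqrt{h/\gamma}\,|\xi|)$, and take expectation using the stationary moments $\mathbb{E}_{\mu_\infty}[|v|^2]=d$ and $\mathbb{E}_{\mu_\infty}[|\nabla U(x)|^2]\le Ld$ (the latter from integration by parts against the Gibbs density). This produces a one-step $\rho$-error of order $h^{2}\sqrt{d}\,(\gamma^{-1}L+\alpha h(5L+\gamma L^{1/2}))$, which after multiplication by $\mathbf{C}/(ch)$ gives the first inequality. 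Under \Cref{ass:hessian_lip} I would Taylor-expand $\nabla U$ around $X_{h/2}^{\sharp}$ to second order: the palindromic UBU composition annihilates the first-order error by midpoint symmetry (as for deterministic Verlet), leaving a Hessian-Lipschitz remainder bounded by $L_{1}|X_s-X_{h/2}^{\sharp}|^{2}$, whose expected size under $\mu_\infty$ produces the $L_1\sqrt{d}$ term and improves the overall $h$-order to $h^{2}$. Under \Cref{ass:strong_hessian_lip} the same remainder is estimated in the tensor norm $\|\nabla^{3}U\|_{\{1,2\}\{3\}}\le L_{1}^{s}$, and the two paired Brownian-driven position increments contract the cubic form into a Hilbert--Schmidt pairing, removing one factor of $\sqrt{d}$.

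The main obstacle is the higher-order one-step analysis in the last two cases: one must group the mean-zero stochastic contributions so that their expected $\rho$-magnitude is genuinely $O(h^{3})$ rather than $O(h^{5/2})$, for which the independence of the Brownian increments within a single step together with the palindromic symmetry of the UBU composition are both essential, in the spirit of \cite{sanz2021wasserstein}. Once these per-step estimates are in place, substitution into the telescoped bound produces the three claimed inequalities, with the constants $\mathbf{C}$ and $c$ inherited from \Cref{thm:UBU_contr} and with the parameter $\alpha$ arising from the structure of the distance $\rho$ built in Section~\ref{sec:dist}.
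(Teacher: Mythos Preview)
Your telescoping-plus-contraction framework is sound, and for the first inequality (Assumption~\ref{ass} only) a single-step argument does give the right $O(h)$ bound: the one-step $q$-error is $O(\gamma^{-1}Lh^{2}\sqrt{d})$, and division by $ch$ reproduces the claim. The paper uses the same mechanism, though it telescopes in blocks rather than step by step.

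For the second and third inequalities, however, your approach falls half an order short. Under Assumption~\ref{ass:hessian_lip} or~\ref{ass:strong_hessian_lip}, the one-step $q$-error carries the stochastic remainder
\begin{align*}
I_4=\sqrt{2\gamma}\int_{h/2}^{h}\!\int_{h/2}^{s}\!\int_{h-s'}^{s'}\nabla^{2}U(X_{s''})\,\rmd B_{s''}\,\rmd s'\,\rmd s,
\end{align*}
which is centred but has $\mathbb{E}|I_4|\asymp(\mathbb{E}|I_4|^{2})^{1/2}=O(h^{5/2}L\sqrt{d})$. Since $\mathcal{W}_\rho$ is controlled by the expected \emph{absolute} difference under the synchronous coupling, not by the expected difference, this term cannot be pushed to $O(h^{3})$: neither the palindromic symmetry of UBU nor the independence of the two half-step noise blocks \emph{within} one step affects $\mathbb{E}|I_4|$. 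Your final bound would therefore be $\mathbf{C}/(ch)\cdot O(h^{5/2})=O(h^{3/2}/c)$, not the stated $O(h^{2}/c)$.

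The paper avoids this by telescoping over blocks of $\tilde{l}=\lceil(3h\max\{\gamma,2L/\gamma\})^{-1}\rceil$ steps and bounding the block-local error via \Cref{lem:UiT_UBU}. The decisive point is that the remainders $I_4^{(i)}$ on the disjoint intervals $[ih,(i+1)h]$ are orthogonal in $L^{2}$, so
\begin{align*}
\mathbb{E}\Big|\sum_{i=0}^{\tilde{l}-1}I_4^{(i)}\Big|\le\Big(\sum_{i=0}^{\tilde{l}-1}\mathbb{E}|I_4^{(i)}|^{2}\Big)^{1/2}=O\big(\sqrt{\tilde{l}\,h^{5}}\big)=O(h^{2}),
\end{align*}
since $\tilde{l}h=O(1)$. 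It is the independence of Brownian increments \emph{across} steps---exactly the averaging the paper highlights in its discussion of \cite{sanz2021wasserstein}---that recovers the missing half-order; the single-step decomposition you propose discards precisely this structure.
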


Let $\nu_k$ denote the law of the UBU scheme after $k$ steps with initial distribution $\nu_0$ on $\mathbb{R}^{2d}$. By the previous result, we bound the distance between $\nu_k$ and $\mu_{\infty}$.

\begin{theorem}\label{thm:compl_UBU_2}
    Let $U$ satisfy Assumption \ref{ass} and let $h>0$ satisfy \eqref{eq:condition_h_BU} and $\gamma > 0$ satisfy \eqref{eq:gamma_BU} then
    \begin{align*}
        &\mathcal{W}_{1}(\nu_k,\mu_{h,\infty})\leq \frac{15\mathbf{C}\mathbf{N} h\sqrt{d}}{c}\left(\gamma^{-1}L + \alpha (5L + \gamma L^{1/2})h\right)+ \mathbf{M} \mathbf{C}e^{-chk} \mathcal{W}_{1}(\nu_0, \mu_{h,\infty}),
    \end{align*}
    if $U$ satisfies additionally Assumption \ref{ass:hessian_lip} then
    \begin{align*}
        \mathcal{W}_{1}(\nu_k,\mu_{h,\infty})&\leq \frac{12{\mathbf{C}}\mathbf{N}h^{2}\sqrt{d}}{c}\left(2\gamma^{-1}(L_{1}\sqrt{d}+L^{3/2} + \gamma L) + \alpha\left(5L + \gamma L^{1/2}\right)\right)
        \\ & + \mathbf{M}\mathbf{C}e^{-chk} \mathcal{W}_{1}(\nu_0, \mu_{h,\infty}),
    \end{align*}
    and further if $U$ satisfies Assumption \ref{ass:strong_hessian_lip} then
    \begin{align*}
        \mathcal{W}_{1}(\nu_k,\mu_{h,\infty})&\leq \frac{12\mathbf{C}\mathbf{N}h^{2}\sqrt{d}}{c}\left(2\gamma^{-1}(L^{s}_{1}+L^{3/2} + \gamma L) + \alpha\left(5L + \gamma L^{1/2}\right)\right)
        \\ & + \mathbf{M}\mathbf{C}e^{-chk} \mathcal{W}_{1}(\nu_0, \mu_{h,\infty}),
    \end{align*}
    for distance function $\rho$ given in Section \ref{sec:dist}, contraction rate $c$ defined by \eqref{eq:c_BU}, the preconstants $\mathbf{C}$, $\mathbf{M}$ and $\mathbf{N}$ are defined by \eqref{eq:contr_UBU_C},\eqref{eq:M} and \eqref{eq:N} respectively.

\end{theorem}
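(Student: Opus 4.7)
The plan is to combine the $L^1$-Wasserstein convergence corollary stated immediately after Theorem~\ref{thm:UBU_contr} with the three bias estimates of Theorem~\ref{thm:compl_UBU}. The key observation is that $\mu_{h,\infty}$ is the unique invariant measure of the UBU chain, so the contraction inequality can be applied directly with $\mu_{h,\infty}$ fixed as the second argument, producing the exponential term in each of the three displays. No fresh coupling or discretization analysis is needed at this stage; the role of the theorem is purely to assemble the results.

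First, I would invoke the $L^1$-Wasserstein convergence corollary for the UBU scheme with initial distribution $\nu_0$ and with the invariant measure $\mu_{h,\infty}$ playing the role of the second marginal (so its law is preserved step-by-step). This yields directly
\[
\mathcal{W}_1(\nu_k,\mu_{h,\infty}) \le \mathbf{M}\,\mathbf{C}\, e^{-chk}\, \mathcal{W}_1(\nu_0,\mu_{h,\infty}),
\]
with $c$ from \eqref{eq:c_BU}, $\mathbf{C}$ from \eqref{eq:contr_UBU_C}, and $\mathbf{M}$ from \eqref{eq:M}, matching the second summand on the right-hand side of each of the three inequalities to be established.

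Second, under each of the three regularity regimes (Assumption~\ref{ass} alone; with Assumption~\ref{ass:hessian_lip}; with Assumption~\ref{ass:strong_hessian_lip}) I would lift the $\mathcal{W}_\rho$ bias bound of Theorem~\ref{thm:compl_UBU} to a $\mathcal{W}_1$ bias bound via the equivalence $\mathcal{W}_1 \le \mathbf{N}\,\mathcal{W}_\rho$ with $\mathbf{N}$ from \eqref{eq:N}. For example, under Assumption~\ref{ass} alone this gives
\[
\mathcal{W}_1(\mu_\infty,\mu_{h,\infty}) \le \frac{15\,\mathbf{C}\,\mathbf{N}\, h\sqrt{d}}{c}\bigl(\gamma^{-1}L + \alpha(5L + \gamma L^{1/2})h\bigr),
\]
which is precisely the first summand in the first case of the theorem; the analogous conversions under Assumptions~\ref{ass:hessian_lip} and \ref{ass:strong_hessian_lip} reproduce the $h^2\sqrt{d}$ first summands of the second and third cases. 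Adding this non-negative bias bound to the exponential contraction estimate above yields the stated inequalities term by term.

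The genuinely hard work sits upstream --- in the coupling and distance construction behind Theorem~\ref{thm:UBU_contr}, and in the careful strong error analysis behind Theorem~\ref{thm:compl_UBU} where the authors exploit independence of the Brownian increments across consecutive UBU iterates to extract the improved $h^2\sqrt{d}$ dimension dependence under Assumptions~\ref{ass:hessian_lip} and \ref{ass:strong_hessian_lip}. Relative to those proofs, the present theorem is a short bookkeeping argument: the only mild technical point is tracking the conversions between $\mathcal{W}_\rho$ and $\mathcal{W}_1$ via $\mathbf{M}$ and $\mathbf{N}$ so that the preconstants agree exactly with those displayed in the statement.
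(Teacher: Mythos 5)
Your plan assembles exactly the same ingredients the paper uses: the UBU contraction in $\mathcal{W}_1$ (the corollary after Theorem~\ref{thm:UBU_contr}), the three bias estimates from Theorem~\ref{thm:compl_UBU}, and the metric equivalence \eqref{eq:dist_equiv} giving $\mathcal{W}_1 \le \mathbf{N}\,\mathcal{W}_\rho$. The paper's own proof simply cites the triangle inequality, Theorem~\ref{thm:compl_UBU}, Theorem~\ref{thm:UBU_contr}, and \eqref{eq:dist_equiv}, so in spirit you have the right argument.

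One imprecision is worth flagging. The statement as printed bounds $\mathcal{W}_1(\nu_k,\mu_{h,\infty})$, but the sentence introducing the theorem says ``we bound the distance between $\nu_k$ and $\mu_\infty$,'' and the analogous Euler statement (Theorem~\ref{thm:complexity_guarantee}) bounds $\mathcal{W}_1(\mu_\infty,\nu_k)$, so the left-hand side is almost certainly intended to be $\mathcal{W}_1(\nu_k,\mu_\infty)$. Read literally, your closing step --- ``adding this non-negative bias bound to the exponential contraction estimate'' --- is a truism: the contraction estimate already bounds $\mathcal{W}_1(\nu_k,\mu_{h,\infty})$, so tacking on a non-negative number just weakens it and proves nothing new. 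For the intended statement the step that actually does work is the triangle inequality through the intermediate invariant measure,
\begin{align*}
\mathcal{W}_1(\nu_k,\mu_\infty) \le \mathcal{W}_1(\nu_k,\mu_{h,\infty}) + \mathcal{W}_1(\mu_{h,\infty},\mu_\infty),
\end{align*}
with the first term controlled by Theorem~\ref{thm:UBU_contr} (lifted to $\mathcal{W}_1$ via $\mathbf{M}$, $\mathbf{C}$) and the second by Theorem~\ref{thm:compl_UBU} (lifted via $\mathbf{N}$). If you replace the ``add a non-negative quantity'' sentence with this explicit triangle-inequality split, your argument matches the paper's proof line for line, and the preconstants $\mathbf{C}$, $\mathbf{M}$, $\mathbf{N}$ fall out exactly as you tracked them.
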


\begin{remark}[Complexity guarantees]\label{remark:complexity_UBU} We establish the same complexity guarantees as the Euler scheme when we do not have additional smoothness (only Assumption \ref{ass} holds). When Assumption \ref{ass:hessian_lip} holds by the same reasoning as Remark \ref{remark:complexity_euler}, if we choose $h \propto \sqrt{\epsilon/d}$  and the number of steps $k$ of order $k\propto \log(\Delta(0)/\varepsilon)/(ch)$, where $\Delta(0)=\mathcal{W}_{1} (\mu_{h,\infty}, \nu_0)$. We then achieve a $\varepsilon$-accuracy in the order of $\sqrt{d/\epsilon}$ steps. If additionally Assumption \ref{ass:strong_hessian_lip} holds we can achieve  $\varepsilon$-accuracy in an improved order of $d^{1/4}/\sqrt{\epsilon}$ steps.

\end{remark}

\begin{remark}[Particle model and propagation of chaos for the UBU scheme]
    As for the Euler scheme (see Remark~\ref{rem:particle_comp}), we can also get bounds on the strong accuracy for the UBU scheme applied to the particle model. Together with the propagation of chaos result given in \cite[Theorem 17]{schuh2022global}, it is possible to bound under the assumptions in Theorem~\ref{thm:compl_UBU} and a smallness assumption on the interaction potential the distance between the law $\mu_k^N$ of the particle system with $N$ after $k$-UBU steps and the measure $\mu_*$ given in \eqref{eq:limitmeas} by 
    \begin{align*}
        \mathcal{W}_{\ell^N_1}(\mu_k^N, \mu_*^{\otimes N})\le {C} \Big(e^{-chk} \mathcal{W}_1(\mu_0, \mu_*) + \sqrt{d} h^2 + N^{-1/2}\Big)
    \end{align*}
    for some constant $ C>0$ which is independent of $N$, $d$ and $h$ and rate $c>0$ which is up to a constant factor of the same form as the rate in \eqref{eq:c_BU} . We note that this bound has a better order in $h$ compared to the complexity bounds of nonlinear HMC given in \cite{bou2023nonlinear}.
\end{remark}

\subsection{Numerical illustration of the coupling}

We implement synchronous and reflection coupling as defined in the analysis (see Section \ref{sec:BU_coupling}) for two model problems and the BU scheme (and UBU equivalently). These model problems were also considered in \cite{bou2020coupling,bou2023convergence} and contour plots of their respective potentials are given in Figure \ref{fig:contour}. First of which is the banana-shaped potential, which is defined for $(x,y) \in \mathbb{R}^{2}$ by 
\begin{align*}
U(x,y) = (1 - x)^2 + 10(y - x^2)^2,
\end{align*}
which is unimodal, but has a very flat minima and does not satisfy the strong-convexity assumption. The second is a 10-mode Gaussian mixture model where each of the ten Gaussian distributions has standard deviation $\sigma = 0.5$ and mean  given as in \cite[Table 1]{Liang2001}. 

In Figure \ref{fig:banana} and Figure
\ref{fig:GMM} the contraction properties are illustrated for the two model cases under synchronous and reflection coupling. Interestingly we observe very slow convergence for synchronous coupling on the Banana potential model for small values of the friction parameter, in particular when the coupled chains are close together in the very flat basin. However, when reflection coupling is used this is not the case. The initial bump in \ref{fig:banana} corresponds to the additional prefactor we have in the convergence results in Wasserstein distance with respect to the Euclidean distance. After this initial phase, we exhibit exponential convergence, which is illustrated in Figure \ref{fig:banana} and \ref{fig:GMM} on the log-scale, we remark that at small distances on the log-scale the estimators have very high relative variance, hence noise is present even after 100,000 independent runs.

\begin{figure*}[t!]
    \centering
    \begin{subfigure}[t]{0.5\textwidth}
        \centering
        \includegraphics[height=2.0in]{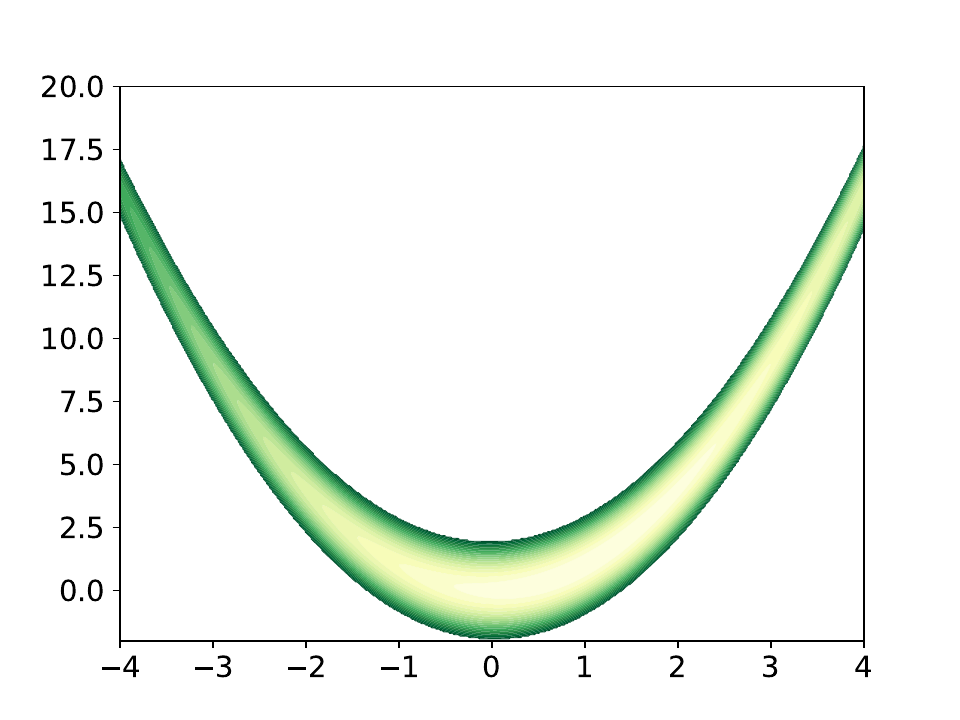}
        \caption{}
    \end{subfigure}%
    ~ 
    \begin{subfigure}[t]{0.5\textwidth}
        \centering
        \includegraphics[height=2.0in]{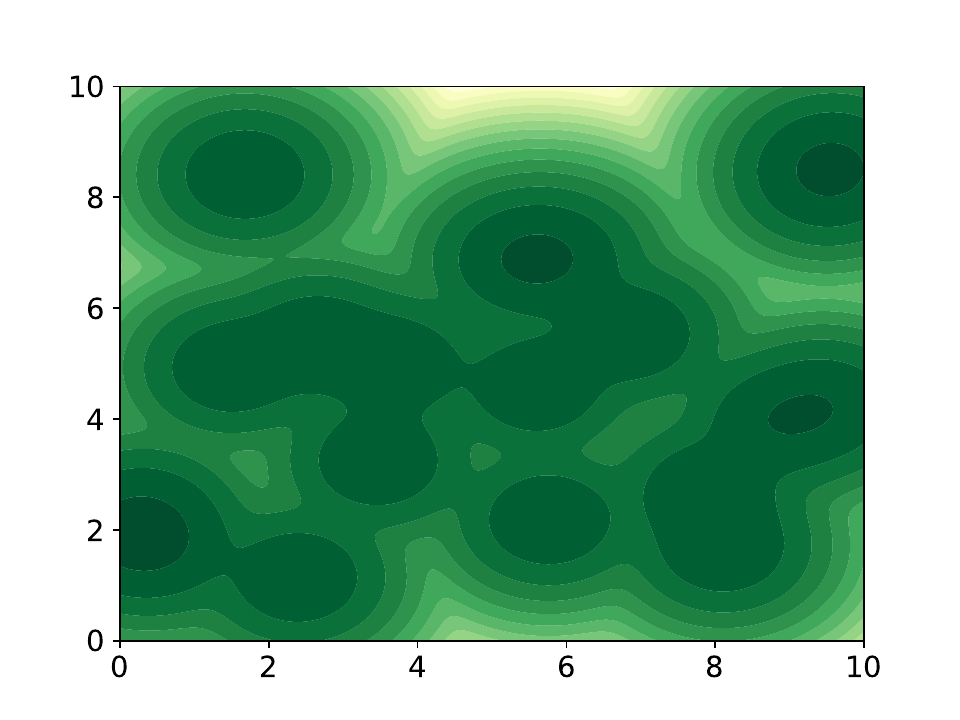}
        \caption{}
    \end{subfigure}
    \caption{a) Contour plot of banana potential, where area outside of basin has been coloured white. b) Contour plot of potential for the Gaussian mixture model.}
    \label{fig:contour}
\end{figure*}

\begin{figure}[H]
	\centering
	\includegraphics[width=0.45\textwidth]{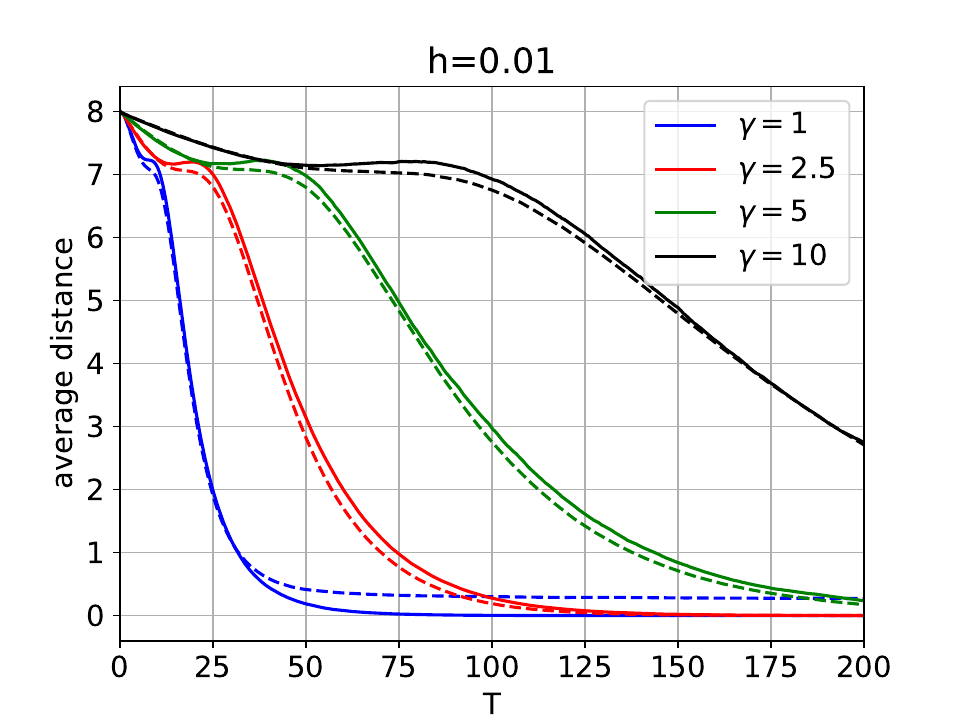}
        \includegraphics[width=0.45\textwidth]{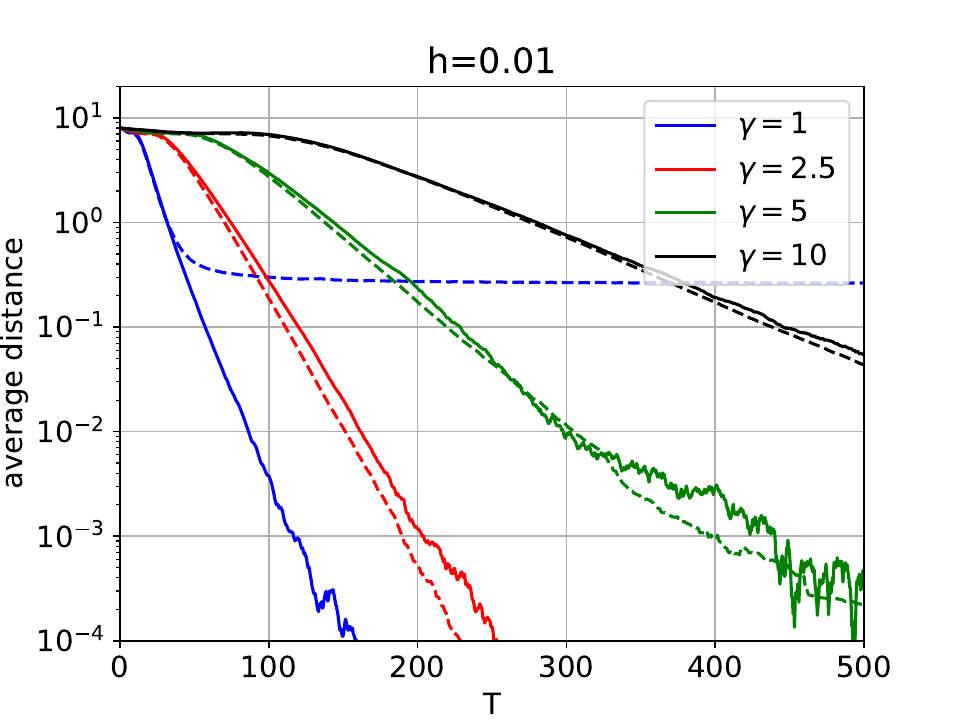}
        \caption{Banana potential: Bold lines on the plot are where reflection coupling is used and dashed lines are where synchronous coupling is used. Initializing two trajectories coupled using synchronous coupling and our reflection coupling construction for the BU scheme initialized at $[4,16]$ and $[-4,16]$ respectively for the banana potential model. Plotting the average distance between the trajectories versus time (number of iterations multiplied by stepsize) where we have averaged the results over 100,000 independent runs. Different colours correspond to different values of the friction parameter $\gamma >0$, which are provided in the legend.}
        \label{fig:banana}
\end{figure}

\begin{remark}
    In Figure \ref{fig:banana} for small values of the friction, close to the Hamiltonian regime, synchronous coupling performs very poorly. The basin is flat and the coupled dynamics exhibit very oscillatory behaviour as illustrated in the Figure \ref{fig:low-friction}; a snapshot of the dynamics of two synchronously coupled particles in the low-friction regime. 

    However, reflection coupling performs much better in this regime; a typical coupling event is illustrated in Figure \ref{fig:low-friction}.

    \begin{figure*}[t!]
    \centering
    \begin{subfigure}[t]{0.5\textwidth}
        \centering
        \includegraphics[height=2.0in]{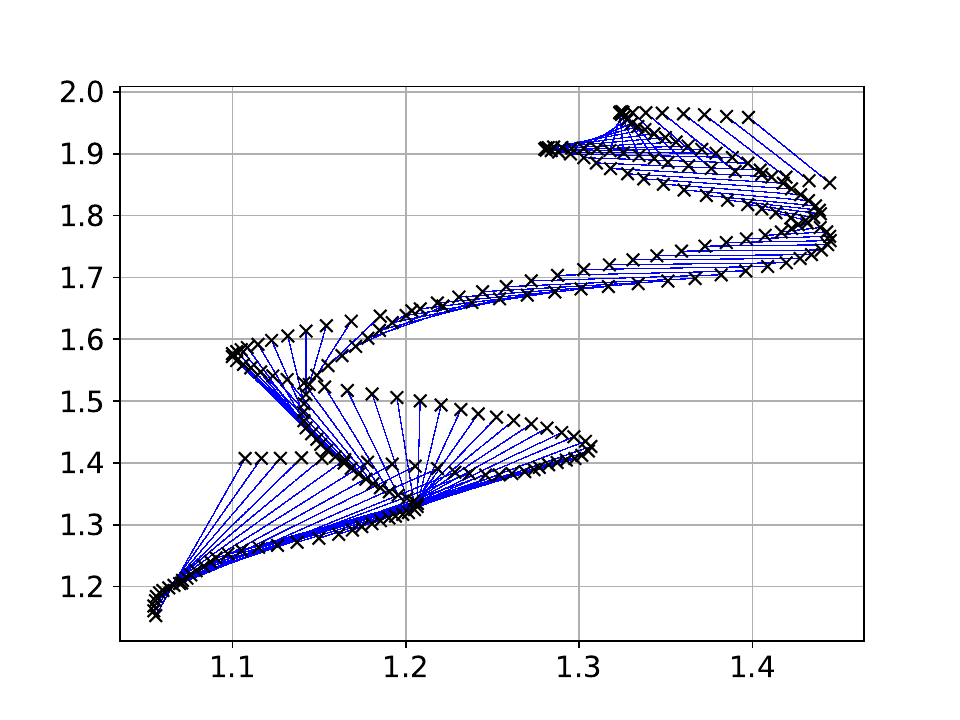}
        \caption{}
    \end{subfigure}%
    ~ 
    \begin{subfigure}[t]{0.5\textwidth}
        \centering
        \includegraphics[height=2.0in]{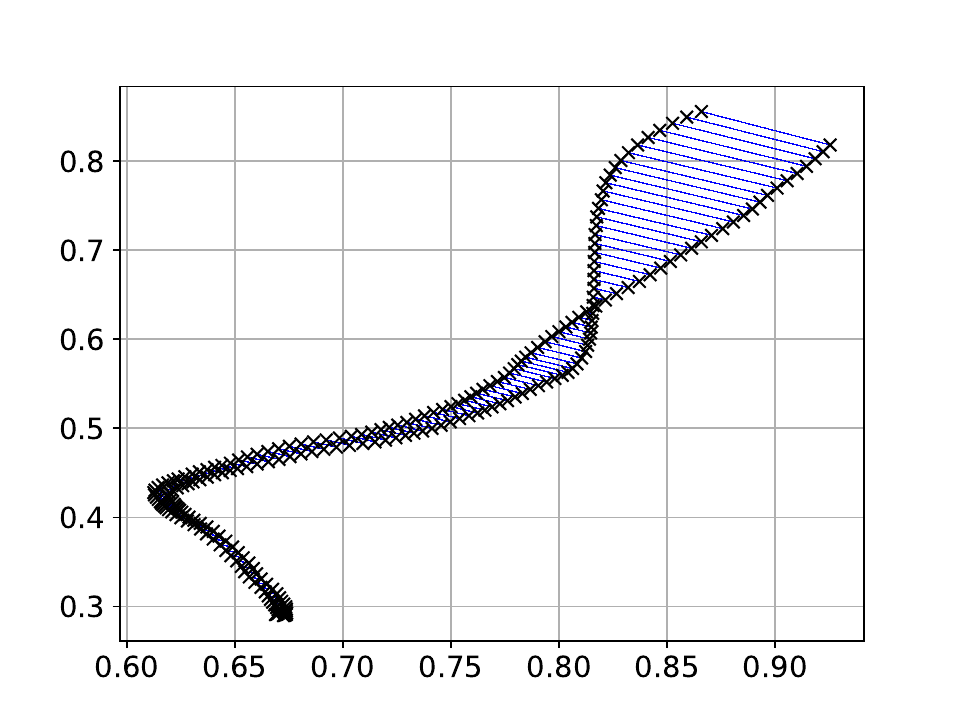}
        \caption{}
    \end{subfigure}
    \caption{Snapshots of the coupled dynamics in the low-friction regime for the Banana potential. a) Synchronously coupled dynamics. b) Reflection coupled dynamics.}
        \label{fig:low-friction}
\end{figure*}

\end{remark}

\begin{figure}[H]
	\centering
	\includegraphics[width=0.45\textwidth]{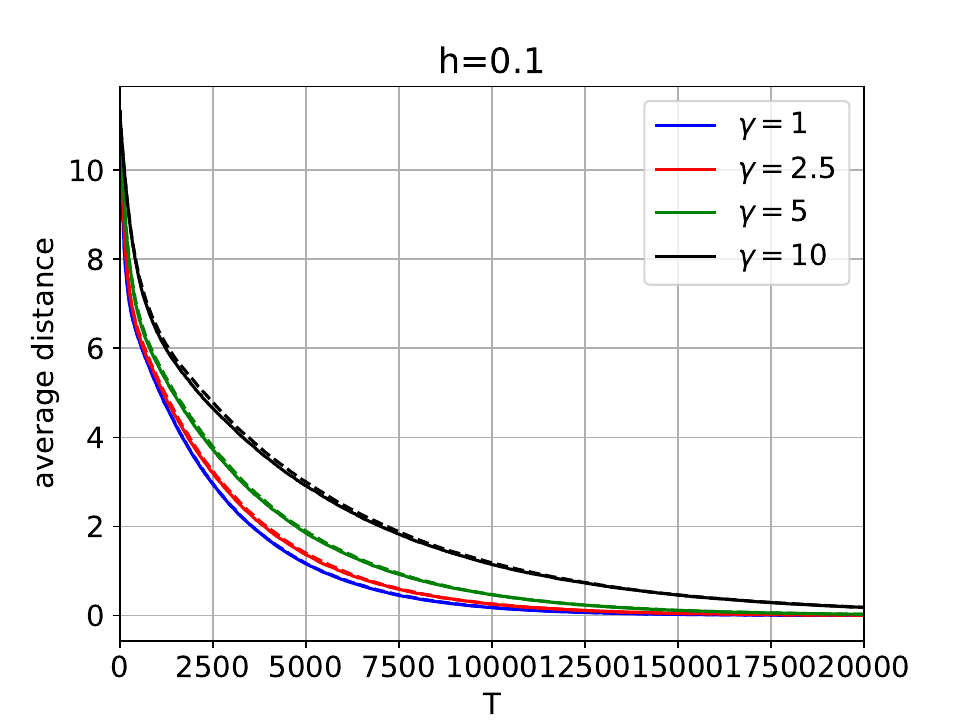}
        \includegraphics[width=0.45\textwidth]{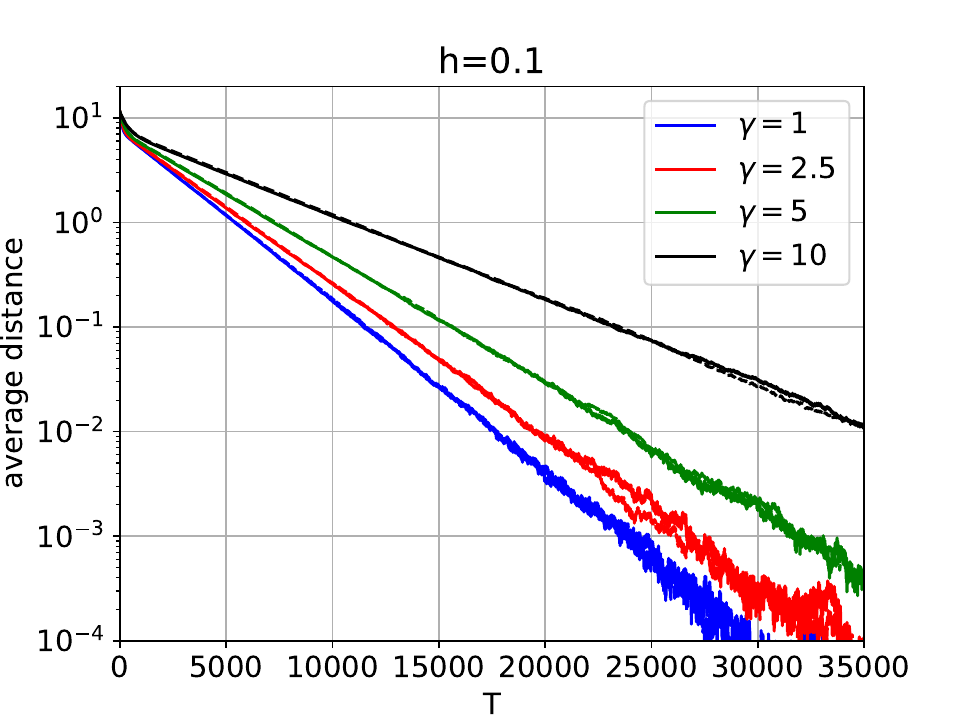}
        \caption{Gaussian Mixture Model: Bold lines on the plot are where reflection coupling is used and dashed lines are where synchronous coupling is used. Initializing two trajectories coupled using synchronous coupling and our reflection coupling construction for the BU scheme initialized at $[1,1]$ and $[9,9]$ respectively for the Gaussian mixture model. Plotting the average distance between the trajectories versus time (number of iterations multiplied by stepsize) where we have averaged the results over 100,000 independent runs. Different colours correspond to different values of the friction parameter $\gamma >0$, which are provided in the legend.}
        \label{fig:GMM}
\end{figure}

\section{Coupling and distance construction}\label{sec:dist_coupl}
\subsection{Distance function} \label{sec:dist}
As in the construction of  the distance function in \cite[Section 4]{schuh2022global} {(see also \Cref{fig:metric})}, we consider two metrics
$r_l,r_s:\mathbb{R}^{2d}\times \mathbb{R}^{2d}\to [0,\infty)$ given by 
\begin{align*}
r_l((x,v),(x',v'))^2&:=\gamma^{-2} (x-x')^T K (x-x') +\frac{1}{2}|(1-2\tau)(x-x')+\gamma^{-1}(v-v')|^2
\\ & +\frac{1}{2}\gamma^{-2}|v-v'|^2,
\end{align*}
and
\begin{align*}
r_s((x,v),(x',v')):=\alpha |x-x'|+|x-x'+\gamma^{-1}(v-v')|,
\end{align*}
for $(x,v),(x',v')\in\mathbb{R}^{2d}$, where the constant $\tau$ and $\alpha$ are given by 
\begin{align}
&\tau:= \min(1/8, \gamma^{-2}\kappa/4) 
\qquad \text{and} \label{eq:tau}
\\ &\alpha:=2L\gamma^{-2}. \label{eq:alpha}
\end{align}
Note that the assumption we impose on $\gamma$ in \Cref{sec:mainresults} guarantees that $\tau>0$.
The matrix $K$ is a positive-definite matrix with smallest eigenvalue $\kappa>0$ and is given by splitting $\nabla U(x)=Kx+\nabla G(x)$, where $G$ is a convex function outside a ball of radius $R$.
Before constructing the metric $\rho:\mathbb{R}^{2d}\times\mathbb{R}^{2d}\to[0,\infty)$ we note that the distances $r_l$ and $r_s$ are equivalent, i.e., it holds
\begin{align}
2\epsilon r_l((x,v),(x',v'))\le r_s((x,v),(x',v')) \le \mathcal{E}^{-1} r_l((x,v),(x',v')) \label{eq:equi_dist}
\end{align}
with
\begin{align}
\epsilon &:= \frac{1}{2}\min(1, \frac{2 \alpha}{3 \sqrt{L_K}\gamma^{-1}},\alpha) \label{eq:epsilon}
\\ \mathcal{E} & := \min(\frac{\sqrt{\kappa}\gamma^{-1}}{\sqrt{8}\alpha},\frac{1}{2}). \label{eq:mathcalE}
\end{align}
{For a detailed calculation, we refer to \cite[Section 4.1]{schuh2022global}.}
Define 
\begin{align*}
\Delta((x,v),(x',v')):=r_s((x,v),(x',v'))-\epsilon r_l((x,v),(x',v')),
\end{align*}
for $(x,v),(x',v')\in\mathbb{R}^{2d}$ and 
\begin{align*}
D_{\mathcal{K}}:=\sup_{ \substack{((x,v),(x',v'))\in\mathbb{R}^{4d} \\ (x-x',v-v') \in\mathcal{K} }} \Delta((x,v),(x',v')),
\end{align*}
where the compact set $\mathcal{K}$ on $\mathbb{R}^{2d}$ is given by
\begin{align*}
\mathcal{K}:=\{(z,w)\in\mathbb{R}^{2d}: \kappa\gamma^{-2}|z|^2+ (1/2)|z+\gamma^{-1} w|^2+(1/2)|\gamma^{-1} w|^2 \le \mathcal{R} \}
\end{align*}
with
\begin{align} \label{eq:mathcalR}
\mathcal{R}:= \tau^{-1}L_G R^2\gamma^{-2}.
\end{align}
Further, we define the constant $R_1$ by
\begin{align} \label{eq:R_1}
R_1:=\sup_{ \substack{((x,v),(x',v')): \\ \Delta((x,v),(x',v')) \le D_{\mathcal{K}} }} r_s((x,v),(x',v')).
\end{align}

Then, we define the metric $\rho:\mathbb{R}^{2d}\times\mathbb{R}^{2d}\to[0,\infty)$ by
\begin{align} \label{eq:rho}
\rho ((x,v),(x',v'))= f(\Delta((x,v),(x',v'))\wedge D_{\mathcal{K}}+\epsilon r_l((x,v),(x',v')))
\end{align}
for $(x,v),(x',v')\in\mathbb{R}^{2d}$. 
{We refer to \Cref{fig:metric} to illustrate this construction.}
The function $f$ is an increasing concave function with $f(0)=0$ and is defined by
\begin{align}\label{eq:f}
f(r)=\int_0^r \phi(s)\psi(s) \rmd s,
\end{align}
where 
\begin{align*}
& \phi(s)=\exp \Big(-128 \alpha \gamma^2\frac{(s\wedge R_1)^2}{2}\Big) && \Phi(s)=\int_0^s \phi(x)\rmd x
\\ & \psi(s)= 1-\frac{\hat{c}}{2}\gamma \int_0^{s\wedge R_1} \Phi(x)\phi(x)^{-1} \rmd x	&& \hat{c}=\frac{1}{\gamma \int_0^{R_1} \Phi(s)\phi(s)^{-1} \rmd s}.
\end{align*}
We note that for $r\in[0,R_1)$, the function $f$ satisfies
\begin{align} \label{eq:f_conseq}
f''(r)=-128 \alpha \gamma^2 r f'(r) -\frac{\hat{c}\gamma}{2}\int_0^r \phi(s) \rmd s.
\end{align}
Further, for all $r\ge 0$
\begin{align} \label{eq:f_conseq4}
f'(R_1)r\le f'(r) r \le f(r) \le \Phi(r) \le r,
\end{align}
since $\psi(r)\in[1/2,1]$.
We refer to \cite{schuh2022global} where a proof that $\rho$ defines indeed a metric is given. In particular it holds
\begin{align} \label{eq:dist_equiv}
    |(x,v)-(x',v')|\le \mathbf{N} \rho ((x,v),(x',v'))\le  \mathbf{M}|(x,v)-(x',v')| 
\end{align}
with
\begin{align}
    & \mathbf{M}= f'(R_1)^{-1}\frac{2 \max(\gamma(1+\alpha),1)}{\epsilon\min(\sqrt{2\kappa},1)}  \label{eq:M}
    \\ &\mathbf{N}=f'(R_1)^{-1} \frac{\gamma}{\epsilon \min(\sqrt{\kappa}, \sqrt{1/2})}. \label{eq:N}
\end{align}

Further, observe that in the strong convex case, i.e., $R=0$, the construction of the distances reduces to $\rho((x,v),(x',v'))=r_l((x,v),(x',v'))$.

\subsection{Coupling construction for the Euler scheme}
Next, we define the coupling by using the idea of the coupling construction from \cite{bou2020coupling}. 
Consider two states $(x,v)$ and $(x',v')$. We define the next coupling step of two copies given by \eqref{eq:EM} depending whether the two current positions $(x,v)$ and $(x',v')$ are close to each other or far apart. 

In particular, the chains are coupled through the sequence of random variables $(\xi_k,\xi_k')_{k\in \mathbb{N}}$ on a common probability space and which satisfy $\xi_k,\xi_k'\sim \mathcal{N}(0,I_d)$ for all $k\in\mathbb{N}$.

\paragraph{Synchronous coupling:}
Given $(\mathbf{X}_k,\mathbf{V}_k),(\mathbf{X}_k',\mathbf{V}_k')\in\mathbb{R}^{2d}$. 
If 
\begin{align*}
D_{\mathcal{K}}+\epsilon r_l((\mathbf{X}_k,\mathbf{V}_k),(\mathbf{X}_k',\mathbf{V}_k')) \le  r_s ((\mathbf{X}_k,\mathbf{V}_k),(\mathbf{X}_k',\mathbf{V}_k') ),
\end{align*}
i.e., if the two states are far apart, we take the same random variables $\xi_{k+1}=\xi_{k+1}'$ and the next step $((\mathbf{X}_{k+1},\mathbf{V}_{k+1}),(\mathbf{X}_{k+1}',\mathbf{V}_{k+1}'))$ is given by
\begin{align*}
&\begin{cases}
\mathbf{X}_{k+1}= \mathbf{X}_k+h\mathbf{V}_{k} \\
\mathbf{V}_{k+1}= \mathbf{V}_{k} - h \nabla U(\mathbf{X}_{k})-h\gamma \mathbf{V}_k+\sqrt{2 \gamma h} \xi_{k+1},
\end{cases}
\\&
\begin{cases}
\mathbf{X}_{k+1}'= \mathbf{X}_k'+h\mathbf{V}_{k}' \\
\mathbf{V}_{k+1}'= \mathbf{V}_{k}' - h \nabla U(\mathbf{X}_{k}')-h\gamma \mathbf{V}_k'+\sqrt{2 \gamma h} \xi_{k+1}.
\end{cases}
\end{align*} 

\paragraph{Contractive coupling:}
If $D_{\mathcal{K}}+\epsilon r_l((\mathbf{X}_k,\mathbf{V}_k),(\mathbf{X}_k',\mathbf{V}_k')) >  r_s ((\mathbf{X}_k,\mathbf{V}_k),(\mathbf{X}_k',\mathbf{V}_k') )$, then let $\xi_{k+1}$ be a normally distributed random variable. Let $\mathcal{U}$ be an independent uniformly distributed random variable on $[0,1]$ and $\beta$ be given by 
\begin{align} \label{eq:beta}
\beta=\frac{1}{\sqrt{2\gamma^{-1} h}}. 
\end{align}
We define $\xi_{k+1}'$ by
\begin{align}\label{eq:coupl_contr}
\xi_{k+1}'=\begin{cases} \xi_{k+1}+\beta q_k & \text{if } \mathcal{U}\le \frac{\varphi_{0,1}(e_k\cdot\xi_{k+1}+\beta|q_k|)}{\varphi_{0,1}(e_k\cdot \xi_{k+1})},
\\ \xi_{k+1}-2(e_k\cdot \xi_{k+1})e_k & \text{otherwise},
\end{cases}
\end{align}
where $q_k=\mathbf{X}_k-\mathbf{X}_k'+\gamma^{-1}(\mathbf{V}_{k}-\mathbf{V}_k')$, $e_k=q_k/|q_k|$, and $\varphi_{0,1}$ denotes the density of the standard normal distribution.
We set $\hat{q}_k=\beta q_k$ and 
\begin{align*}
\Xi_{k+1}=\xi_{k+1}-\xi_{k+1}'=\begin{cases} -\hat{q}_k & \text{if } \mathcal{U}\le \frac{\varphi_{0,1}(e_k\cdot\xi_{k+1}+\beta|q_k|)}{\varphi_{0,1}(e_k\cdot \xi_{k+1})},
\\ 2(e_k\cdot \xi_{k+1})e_k & \text{otherwise}.
\end{cases}
\end{align*}
Note that on the line $q_k=0$ this coupling simplifies to a synchronous coupling. This corresponds to the coupling for the time-continuous Langevin dynamics in \cite{eberle2019couplings, schuh2022global}. Moreover, for $h\to 0$ the above-constructed coupling converges to the one in \cite{eberle2019couplings, schuh2022global}. Moreover, if $q_k\neq 0$, it holds $\xi_{k+1}-\xi_{k+1}'=-\beta q_k$ with maximal probability. Otherwise, we consider a reflection coupling, which is reflected at the hyperspace $q_k=0$.
We note that this indeed defines a coupling, see \cite[Section 2.3.2.]{bou2020coupling}.

\subsection{Coupling construction for the BU scheme}\label{sec:BU_coupling}

Similarly, the construction for the coupling for the BU scheme relies on the idea of the coupling from \cite{chak2023reflection}. Consider two states $(x,v)$ and $(x',v')$. The coupled chain $(\mathbf{X}_k,\mathbf{V}_k, \mathbf{X}_k',\mathbf{V}_k')_{k\in \mathbb{N}}$ of two copies of the BU scheme is given by coupling the sequence of random variables $(\xi^{(1)}_k,\xi^{(2)}_k)_{k\in \mathbb{N}}$ and  $(\xi^{(1)'}_k,\xi^{(2)'}_k)_{k\in \mathbb{N}}$ on a common probability space such that $(\xi^{(1)}_k,\xi^{(2)}_k),(\xi^{(1)'}_k,\xi^{(2)'}_k)\sim \mathcal{N}(0_{2d},I_{2d})$ for all $k\in\mathbb{N}$.

Given $(\mathbf{X}_{k},\mathbf{V}_k), (\mathbf{X}_k',\mathbf{V}_k')\in \mathbb{R}^{2d}$. Let $(\xi^{(1)}_{k+1},\xi^{(2)}_{k+1})\sim \mathcal{N}(0_{2d}, I_{2d})$. If 
\begin{align*}
    D_{\mathcal{K}}+\epsilon r_l((\mathbf{X}_k,\mathbf{V}_k), (\mathbf{X}_k',\mathbf{V}_k'))\le r_s((\mathbf{X}_k,\mathbf{V}_k),(\mathbf{X}_k',\mathbf{V}_k')), 
\end{align*}
we couple the random variables synchronously, i.e., we set $\xi^{(1)'}_{k+1}=\xi^{(1)}_{k+1}$ and $\xi^{(2)'}_{k+1}=\xi^{(2)}_{k+1}$.
Hence, for $Z_k=\mathbf{X}_k-\mathbf{X}_k'$ and $W_k=\mathbf{V}_k-\mathbf{V}_k'$ it holds
\begin{align*}
    &\begin{cases}
        Z_{k+1}=Z_k+ \frac{1-\exp(-\gamma h )}{\gamma}W_k -\frac{1-\eta}{\gamma}h(\nabla U(\mathbf{X}_{k})-\nabla U(\mathbf{X}'_{k}))
        \\ W_{k+1}=\exp(-\gamma h ) W_k - h \exp(-\gamma h) (\nabla U(\mathbf{X}_k)-\nabla U(\mathbf{X}_k')).
    \end{cases}
\end{align*}

If 
$ D_{\mathcal{K}}+\epsilon r_l((\mathbf{X}_k,\mathbf{V}_k), (\mathbf{X}_k',\mathbf{V}_k'))> r_s((\mathbf{X}_k,\mathbf{V}_k),(\mathbf{X}_k',\mathbf{V}_k'))$, 
we set $\xi^{(2)'}_{k+1}=\xi^{(2)}_{k+1}$ and construct $\xi^{(1)'}_{k+1}$ in the following way: Let $\mathcal{U}\sim \text{Unif}[0,1]$ be an independent uniformly distributed random variable and let $\beta$ be given by \eqref{eq:beta}. We define $\xi^{(1)'}$ as in \eqref{eq:coupl_contr}, i.e., 
\begin{align}\label{eq:coupl_contr_BU}
\xi_{k+1}^{(1)'}=\begin{cases} \xi_{k+1}^{(1)}+\beta q_k & \text{if } \mathcal{U}\le \frac{\varphi_{0,1}(e_k\cdot\xi^{(1)}_{k+1}+\beta|q_k|)}{\varphi_{0,1}(e_k\cdot \xi^{(1)}_{k+1})},
\\ \xi_{k+1}^{(1)}-2(e_k\cdot \xi_{k+1}^{(1)})e_k & \text{otherwise},
\end{cases}
\end{align}
where $q_k=\mathbf{X}_k-\mathbf{X}_k'+\gamma^{-1}(\mathbf{V}_{k}-\mathbf{V}_k')$, $e_k=q_k/|q_k|$, and $\varphi_{0,1}$ denotes the density of the standard normal distribution. Further, as for the Euler scheme, we set $\hat{q}_k=\beta q_k$ and the difference of $\xi_{k+1}^{(1)}$ and $\xi_{k+1}^{(1)'}$ satisfies
\begin{align*}
\Xi_{k+1}=\xi_{k+1}^{(1)}-\xi_{k+1}^{(1)'}=\begin{cases} -\hat{q}_k & \text{if } \mathcal{U}\le \frac{\varphi_{0,1}(e_k\cdot\xi_{k+1}^{(1)}+\beta|q_k|)}{\varphi_{0,1}(e_k\cdot \xi_{k+1}^{(1)})},
\\ 2(e_k\cdot \xi_{k+1}^{(1)})e_k & \text{otherwise}.
\end{cases}
\end{align*}
This construction defines a coupling, see \cite[Section 2.3.2.]{bou2020coupling}.

\section{Proofs}\label{sec:proofs}
\subsection{Euler-Maruyama} \label{sec:proofs_euler}

To prove \Cref{thm:contra_EM}, we first show local contraction for the distance $r_l$ if the distance is sufficiently large.  
\begin{proposition} \label{thm:case1}
Let the potential $U$ be of the form $U := x^{T}Kx + G(x)$, where the symmetric and positive definite matrix $K$ satisfies $\kappa I_{d}\prec K \prec L_{K}I_{d}$ and $G$ is convex outside a Euclidean ball, i.e., $(\nabla G(x)-\nabla G(y))\cdot (x-y) \ge 0$ for all $x,y \in \mathbb{R}^{d}$ such that $|x-y| > R$, now consider two iterates of the Euler-Maruyama scheme $(\mathbf{X}_{k},\mathbf{V}_{k})_{k \in \mathbb{N}}$ and $(\mathbf{X}'_{k},\mathbf{V}'_{k})_{k \in \mathbb{N}}$ with synchronously coupled noise increments and metric $r_{l}$ between the iterates.  If $r^{2}_{l}((\mathbf{X}_{k},\mathbf{V}_{k}),(\mathbf{X}'_{k},\mathbf{V}'_{k}))\ge \mathcal{R}$ at iteration $k \in \mathbb{N}$ with $\mathcal{R}$ given in \eqref{eq:mathcalR}, $ h < \min\{\frac{\gamma}{32L_K},\frac{1}{8\gamma }\}$ and $L_G\gamma^{-2}  \le \kappa/(16 L_G)$ we have that
\[
r^{2}_{l}((\mathbf{X}_{k+1},\mathbf{V}_{k+1}),(\mathbf{X}'_{k+1},\mathbf{V}'_{k+1})) \leq (1-\tau\gamma h)r^{2}_{l}((\mathbf{X}_{k},\mathbf{V}_{k}),(\mathbf{X}'_{k},\mathbf{V}'_{k})),
\]
where $\tau = \min\{\frac{\kappa}{4\gamma^{2}},\frac{1}{8}\}$. If $R=0$, $\mathcal{R}=0$ and the restriction on $\gamma$ improves to $(4+\frac{3}{4})L_G \gamma^{-2}\le 1$.
\end{proposition}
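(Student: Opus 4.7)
The plan is to exploit that in the regime $r_l^2 \ge \mathcal R$ the two copies are well separated, so the synchronous coupling $\xi_{k+1} = \xi_{k+1}'$ applies and the noise cancels. Writing $Z_k = \mathbf X_k - \mathbf X'_k$, $W_k = \mathbf V_k - \mathbf V'_k$, $P_k = \gamma^{-1}W_k$, and $\Delta G_k = \nabla G(\mathbf X_k) - \nabla G(\mathbf X'_k)$, the difference recursion becomes the deterministic system
\begin{align*}
Z_{k+1} &= Z_k + h\gamma P_k,\\
P_{k+1} &= (1-\gamma h) P_k - h\gamma^{-1}(KZ_k + \Delta G_k).
\end{align*}
A direct computation then yields the clean form $Q_{k+1} = Q_k - h\bigl[2\tau\gamma P_k + \gamma^{-1}(KZ_k + \Delta G_k)\bigr]$, which mirrors the continuous-time action of the Langevin generator on $Q$.

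Next, decompose $r_l^2 = \gamma^{-2}Z^T K Z + \tfrac12|Q|^2 + \tfrac12|P|^2$ and expand $r_l^2(Z_{k+1},W_{k+1}) - r_l^2(Z_k,W_k)$ via $|a+b|^2 = |a|^2 + 2a\cdot b + |b|^2$ in each of the three pieces. The key structural identity is that the three resulting $Z_k^T K P_k$ cross terms (with coefficients $+2h\gamma^{-1}$, $-h\gamma^{-1}$, $-h\gamma^{-1}$) cancel, which is exactly the reason for the $\gamma^{-2}$ weight on $Z^T K Z$ in $r_l^2$. After this cancellation the linear-in-$h$ part of the increment equals
\begin{align*}
&-h\Bigl[2\tau(1-2\tau)\gamma\, Z_k\cdot P_k + (1+2\tau)\gamma|P_k|^2 + (1-2\tau)\gamma^{-1}Z_k^T K Z_k\\
&\qquad + (1-2\tau)\gamma^{-1} Z_k\cdot\Delta G_k + 2\gamma^{-1}P_k\cdot\Delta G_k\Bigr],
\end{align*}
which reproduces the continuous-time derivative of $r_l^2$ along synchronous coupling.

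I would then match this against the target $-\tau\gamma h\, r_l^2$ by splitting on $|Z_k|>R$ vs.\ $|Z_k|\le R$. In the first region, monotonicity of $\nabla G$ outside the ball gives $Z_k\cdot\Delta G_k \ge 0$, and the mixed term is handled by Young's inequality, $|P_k\cdot\Delta G_k|\le L_G|P_k||Z_k|$, with the resulting $|Z_k|^2$ and $|P_k|^2$ contributions absorbed by the $Z^T K Z$ and $|P|^2$ dissipation thanks to $K\succeq\kappa I$ and the smallness condition $L_G\gamma^{-2}\le\kappa/(16L_G)$; the $|Z_k|^2$ weight appearing inside $|Q|^2$ inside $r_l^2$ is controlled by the same $Z^T K Z$ slack since $\tau\le\kappa/(4\gamma^2)$. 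In the second region monotonicity can fail, but the worst-case positive perturbation is $(1-2\tau)h\gamma^{-1}L_G R^2 \le \tau\gamma h\,\mathcal R \le \tau\gamma h\,r_l^2$, which is exactly how $\mathcal R = \tau^{-1}L_G R^2\gamma^{-2}$ is calibrated; the cross term $P_k\cdot\Delta G_k$ is again absorbed via Young. For $R=0$ the second region is empty, the boundary contribution disappears, and only the first-region estimate is needed, giving the stated relaxed condition $(4+\tfrac34)L_G\gamma^{-2}\le 1$.

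The remaining task is absorbing the $O(h^2)$ remainder, namely $h^2 P_k^T K P_k$ from $Z^T K Z$, $\tfrac{h^2}{2}|2\tau\gamma P_k + \gamma^{-1}(KZ_k+\Delta G_k)|^2$ from $|Q|^2$, and $\tfrac{\gamma^2 h^2}{2}|P_k|^2 + h^2 P_k\cdot(KZ_k+\Delta G_k) + \tfrac{h^2}{2}\gamma^{-2}|KZ_k+\Delta G_k|^2$ from $|P|^2$. Using $\|K\|\le L_K$ and $|\Delta G_k|\le L_G|Z_k|$, each piece is bounded by a constant multiple of either $h^2 L_K\gamma^{-2} Z_k^T K Z_k$ or $h^2\gamma^2|P_k|^2$; the step-size thresholds $h\gamma\le 1/8$ and $h L_K\le\gamma/32$ then make each remainder a small fraction of the linear dissipation already established, completing the proof of $r_l^2(k+1)\le(1-\tau\gamma h)r_l^2(k)$. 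The main obstacle is precisely this bookkeeping: the rate $\tau\gamma$ is sharp, so the step-size thresholds must be chosen so every quadratic absorption leaves enough of the linear dissipation intact.
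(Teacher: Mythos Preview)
Your proposal is correct and follows essentially the same approach as the paper: the paper packages the identical computation in block-matrix form, writing $r_l^2(k+1) = (Z_k,W_k)^T P^T M P\,(Z_k,W_k)$ with $P = I + hP_1$ and bounding $h(MP_1 + P_1^T M) + h^2 P_1^T M P_1$ against $-\tau\gamma h\,M$, whereas you expand coordinate-wise in $(Z,P,Q)$. The substantive ingredients---the cancellation of the $Z^T K P$ cross terms, the Young/Cauchy--Schwarz absorption of $P\cdot\Delta G$ into $\gamma^{-3}L_G^2|Z|^2$ (then controlled via $L_G^2\gamma^{-2}\le\kappa/16$), the $\mathcal R$-calibrated handling of the $|Z_k|\le R$ region, and the $O(h^2)$ absorption through the thresholds $h\gamma\le 1/8$, $hL_K\le\gamma/32$---are exactly those of the paper's proof.
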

Due to controlling the additional discretization error the bound on $\gamma$ is worse than the bound in the continuous dynamics given in \cite{schuh2022global}. For $h$ tending to zero, it is possible to adapt the proof such that in the limit we can actually recover the condition for $\gamma$ from the continuous dynamics.

\begin{proof}
    We have that
\[
r_{l}^{2}((\mathbf{X}_{k+1},\mathbf{V}_{k+1}),(\mathbf{X}'_{k+1},\mathbf{V}'_{k+1})) = (Z_{k},W_{k}) P^{T} M P \cdot (Z_{k},W_{k}),
\]
where 
\[
M = \begin{pmatrix}
  \gamma^{-2} K + (1-2\tau)^{2}/2I_d & (1-2\tau)/2\gamma I_d\\
      (1-2\tau)/2\gamma I_d &  \gamma^{-2}I_d
\end{pmatrix},
\textnormal{ and }
P = \begin{pmatrix}
  I_d & hI_d\\
      -h(K + Q) &  (1-\gamma h) I_d
\end{pmatrix}.
\]
The matrix $K$ is given by the quadratic term in the potential and $Q$ is defined by 
\[
Q = \int^{1}_{t=0}\nabla^{2}G(\mathbf{X}_{k} + t(\mathbf{X}'_{k}-\mathbf{X}_{k}))\rmd t,
\]
where $G$ is the non-quadratic term in the potential and $Q \succ 0$ for $|\mathbf{X}_{k}-\mathbf{X}'_{k}|>R$ and $L_{G}I_{d} \succ Q \succ -L_{G} I_{d}$ otherwise. 
It holds
\begin{align*}
    r_{l}^{2}((\mathbf{X}_{k+1},\mathbf{V}_{k+1}),(\mathbf{X}'_{k+1},\mathbf{V}'_{k+1})) &  = r_l^2((\mathbf{X}_{k},\mathbf{V}_{k}),(\mathbf{X}'_{k},\mathbf{V}'_{k}))
    \\ & +(Z_k,W_k)^T ( h (M P_1+P_1^T M)+ h^2 P_1^T M P_1  ) (Z_k,W_k)
\end{align*}
with 
\begin{align*}
    P_1=  \begin{pmatrix}
        0 & I_d \\ -(K+Q) & -\gamma I_d
    \end{pmatrix} .
\end{align*}
It is sufficient to show that for all $(z,w)\in \mathbb{R}^{2d}$ with $r_l^2((z,w))\ge \mathcal{R}$, $(z,w)^T ( h (M P_1+P_1^T M)+ h^2 P_1^T M P_1  ) (z,w)\le - \gamma \tau h (z,w)^TM(z,w) $.
It holds
\begin{align*}
    h (M P_1+P_1^T M)&= h \begin{pmatrix}
        -\frac{1-2\tau}{\gamma}(K+Q) & -\gamma^{-2}Q-\tau (1-2\tau)I_d \\ -\gamma^{-2}Q-\tau (1-2\tau)I_d& (-2\tau \gamma^{-1} -\gamma^{-1}) I_d
    \end{pmatrix}
    \\ & \prec h \begin{pmatrix}
        -\frac{1-2\tau}{\gamma}(K+Q)+ 2\gamma^{-3} Q^2 & -\tau (1-2\tau)I_d \\ -\tau (1-2\tau)I_d& (-2\tau \gamma^{-1} -\frac{\gamma^{-1}}{2}) I_d
    \end{pmatrix},
\end{align*}
since for all $(z,w)\in \mathbb{R}^{2d}$, $z^T(-Q\gamma^{-2})w\le \gamma^{-3}z^TQ ^2 z + 1/4 \gamma^{-1}  |w|^2 $ and  $w^T(-Q\gamma^{-2})z\le \gamma^{-3}z^TQ ^2 z + 1/4 \gamma^{-1} |w|^2$.
Further,
\begin{align*}
    h^2 P_1^T M P_1 & = h^2 \begin{pmatrix}
        (K+Q)^2\gamma^{-2} & 
        {\frac{1+2\tau}{2\gamma}(K+Q)} \\ 
        {\frac{1+2\tau}{2\gamma}(K+Q)} & K \gamma^{-2} + \frac{1+ 4\tau^2}{2}I_d
    \end{pmatrix}
    \\ & \prec h^2
    \begin{pmatrix}
        (K+Q)^2\gamma^{-2}+ \frac{1}{2\gamma^{2}}(K+Q)^2 & 0 \\ 0 & K \gamma^{-2} + \frac{3+ 4\tau^2}{2}I_d
    \end{pmatrix}
    \\ & \prec h^2
    \begin{pmatrix}
        (3/2)(K+Q)^2\gamma^{-2} & 0 \\ 0 & K \gamma^{-2} + 2I_d
    \end{pmatrix},
\end{align*}
since { $z^T \frac{1+2\tau}{2\gamma}(K+Q) w \le \frac{(1+\tau)^2}{8\gamma^{2}}z^T(K+Q)^2z+ \frac{1}{2}|w|^2$ and $\tau \le 1/8$}. Further, we observe that $h^2 (3/2) (K+Q)^2\prec h^2 (3 K^2  + 3 Q^2) \prec h^2 (3 L_K K + 3 Q^2)$, where we used in the second step that $K$ is symmetric and positive definite and hence we can take the square root of $K$. Putting the previous estimates together, we obtain
\begin{align*}
    & h (M P_1+P_1^T M)+ h^2 P_1^T M P_1 
    \\ & \prec h \begin{pmatrix}
        \frac{1-2\tau}{\gamma}(-K-Q)+ 2\gamma^{-3} Q^2 & -\tau (1-2\tau)I_d \\ -\tau (1-2\tau)I_d& (-2\tau \gamma^{-1} -\frac{\gamma^{-1}}{2}) I_d
    \end{pmatrix} + h^2
    \begin{pmatrix}
        \frac{3L_K}{\gamma^2} K + \frac{3}{ \gamma^2 } Q^2  & 0 \\ 0 & \frac{K}{\gamma^2} + 2I_d
    \end{pmatrix}.
\end{align*}
By the condition on $h$ it holds $ h^2 \frac{3 L_K}{\gamma^2}K\prec \frac{3(1-4\tau)}{64\gamma}K $, $h(L_K \gamma^{-2} +2)\le \frac{1}{2\gamma}$ and $3 h L_G^2 \gamma^{-2} \le \frac{3}{8} \gamma^{-3} L_G^2$, and hence
\begin{align*}
    & h (M P_1+P_1^T M)+ h^2 P_1^T M P_1 
    \\ & \prec h \begin{pmatrix}
        \frac{(1-2\tau)}{\gamma}(-K-Q)+\frac{3(1-4\tau)}{64\gamma}K + (2+\frac{3}{8})\gamma^{-3} L_G^2 I_d & -\tau (1-2\tau)I_d \\ -\tau (1-2\tau)I_d& -2\tau \gamma^{-1} I_d
    \end{pmatrix}.
\end{align*}
By assumption on $G$ and $\gamma$ and the choice of $\tau$, we observe  
\begin{align*}
    \frac{(1-2\tau)}{\gamma}z^T (-Q) z \le \frac{(1-2\tau)}{\gamma} L_G \1_{|z|\le R}|z|^2 \le L_G \gamma^{-1} R^2
\end{align*}
and 
\begin{align*}
    -\frac{61(1-4\tau)}{64\gamma}z^T K z +\frac{19}{8}\gamma^{-3} L_G^2|z|^2 \le -\frac{61}{128\gamma}z^T K z +\frac{19}{8}\gamma^{-3} L_G^2|z|^2&\le -\frac{1}{4\gamma}\kappa |z|^2 
    \\ & \le -2 \tau \gamma \frac{(1-2\tau)^2}{2}|z|^2,
\end{align*}
since by \eqref{eq:tau}, $\tau \gamma(1-2\tau)^2\le \tau \gamma\le \kappa/(4\gamma)$ in the last step.
Hence, 
\begin{align*}
    &(z,w)^T\Big(h (M P_1+P_1^T M)+ h^2 P_1^T M P_1\Big) (z,w)
    \\ & \le h(z,w)^T  \begin{pmatrix}
        -2\tau \gamma \Big( \gamma^{-2}K +\frac{(1-2\tau)^2}{2}\Big)  & -\tau (1-2\tau)I_d \\ -\tau (1-2\tau)I_d& -2\tau \gamma^{-1} I_d
    \end{pmatrix} (z,w) + hL_G \gamma^{-1} R^2
    \\ & \le -2\tau \gamma h  (z,w)^TM(z,w)+ hL_G \gamma^{-1} R^2 \le -\tau \gamma h  (z,w)^TM(z,w),
\end{align*}
where the last step holds by the definition of $\mathcal{R}$ given in \eqref{eq:mathcalR}. We note that for $R=0$, $Q$ is positive definite and we can bound directly $-\frac{1-2\tau}{\gamma}Q+(2+\frac{3}{8})\gamma^{-3} Q^2\prec 0$, which yields the result.
\end{proof}

\begin{proof}[Proof of Theorem~\ref{thm:contra_EM}]
Fix $k\in \mathbb{N}$. Consider $\mathbf{X}_k,\mathbf{V}_k,\mathbf{X}_k',\mathbf{V}_k'\in \mathbb{R}^d$.
We write $Z_k=\mathbf{X}_k-\mathbf{X}_k'$, $W_k=\mathbf{V}_k-\mathbf{V}_k'$ and $q_k=Z_k+\gamma^{-1}W_k$.

\textit{Case 1:} If $D_{\mathcal{K}}+\epsilon r_l((\mathbf{X}_k,\mathbf{V}_k),(\mathbf{X}_k',\mathbf{V}_k')) \le  r_s ((\mathbf{X}_k,\mathbf{V}_k),(\mathbf{X}_k',\mathbf{V}_k') )$ holds, then the distance satisfies $\rho((\mathbf{X}_k,\mathbf{V}_k),(\mathbf{X}_k',\mathbf{V}_k'))=f(D_{\mathcal{K}}+\epsilon r_l((\mathbf{X}_k,\mathbf{V}_k),(\mathbf{X}_k',\mathbf{V}_k')))$ and we consider a synchronous coupling, i.e., 
$\xi_{k+1}=\xi_{k+1}'$. 
We observe
\begin{align*}
\rho_{k+1}:=\rho((\mathbf{X}_{k+1},\mathbf{V}_{k+1}),(\mathbf{X}_{k+1}',\mathbf{V}_{k+1}'))\le f(D_{\mathcal{K}}+\epsilon r_l(k+1)), 
\end{align*}
where we abbreviate $r_l(k+1)=r_l((\mathbf{X}_{k+1},\mathbf{V}_{k+1}),(\mathbf{X}_{k+1}',\mathbf{V}_{k+1}'))$. 
By Proposition~\ref{thm:case1} it holds
\begin{align*}
r_l(k+1) & \le \sqrt{1-\tau \gamma h}r_l(k) \leq \left(1-\frac{\tau \gamma h}{2}\right)r_l(k).
\end{align*}
Note that the condition on $h$ follows directly from \eqref{eq:condition_h}.
Hence, since $f$ is concave and $f'(R_1)\le f'(D_{\mathcal{K}}+\epsilon r_l(k))$,
\begin{equation} \label{eq:contr_largedist}
\begin{aligned}
\mathbb{E}[\rho_{k+1}-\rho_k]&\le \mathbb{E}[f(D_{\mathcal{K}}+\epsilon r_l(k+1))-f(D_{\mathcal{K}}+\epsilon r_l(k))]
\\ & \le f'(R_1)\epsilon\mathbb{E}[r_l(k+1)-r_l(k)]
 \le -f'(R_1) c_1 h r_l(k),
\end{aligned}
\end{equation}
where $c_{1} = \epsilon \tau \gamma/2$. By \eqref{eq:equi_dist} and $f(r) \le r$,
\begin{align*}
-f'(R_1) c_1 h r_l(k)\le -f'(R_1)h \frac{c_1 r_l(k)}{D_{\mathcal{K}}+\epsilon r_l(k)}\rho_k\le -f'(R_1) c_1 h \mathcal{E}\rho_k.
\end{align*}
Hence, 
\begin{align*}
    \mathbb{E}[\rho_{k+1}]\le (1-f'(R_1) c_1 \mathcal{E} h) \rho_k=\Big(1-h\min\Big(f'(R_1) \epsilon \mathcal{E}  \frac{\gamma}{16},f'(R_1) \epsilon \mathcal{E} \frac{\kappa\gamma^{-1}}{8}\Big)\Big) \rho_k.
\end{align*}

\textit{Case 2:} If $D_{\mathcal{K}}+\epsilon r_l((\mathbf{X}_k,\mathbf{V}_k),(\mathbf{X}_k',\mathbf{V}_k')) >  r_s ((\mathbf{X}_k,\mathbf{V}_k),(\mathbf{X}_k',\mathbf{V}_k') )$, the coupling \eqref{eq:coupl_contr} is applied and $\rho((\mathbf{X}_k,\mathbf{V}_k),(\mathbf{X}_k',\mathbf{V}_k'))=f( r_s((\mathbf{X}_k,\mathbf{V}_k),(\mathbf{X}_k',\mathbf{V}_k')))$.
To show contraction for small distances, the proof is divided in three steps, i.e., first we consider $|Z_k|\ge 4|q_k|$, then $|Z_k|< 4|q_k|$ and $|q_k|\ge \sqrt{2\gamma^{-1} h}$ and finally $|Z_k|< 4|q_k|$ and $|q_k|< \sqrt{2\gamma^{-1} h}$. In the following we abbreviate $r_s(k)=r_s((\mathbf{X}_k,\mathbf{V}_k),(\mathbf{X}_k',\mathbf{V}_k'))$.

\textit{Step 1:} Let $|Z_k|\ge 4|q_k|$. Note that
\begin{align*}
    \alpha|Z_{k+1}|&-\alpha|Z_k|+(1/2)\gamma h \alpha |Z_k|
    \\ & = \alpha[|(1-\gamma h) Z_k + \gamma h (Z_k + \gamma^{-1} W_k)|- (1-\gamma h) |Z_k |- (h\gamma/2)|Z_k|]
    \\ & \le \gamma h \alpha|q_k| - (\alpha h \gamma /2)|Z_k|.
\end{align*}
By concavity of $f$, \Cref{ass}, \eqref{eq:EM} and \eqref{eq:alpha}
\begin{align}
\mathbb{E}[\rho_{k+1}-\rho_k]&\le \mathbb{E}[f(r_s(k+1))-f(r_s(k))]
\le f'(r_s(k)) \mathbb{E}[r_s(k+1)-r_s(k)] \nonumber
\\& \le f'(r_s(k)) \mathbb{E}\Big[-\frac{h\gamma}{2}\alpha |Z_k|+ \alpha h \gamma |q_k| + \sqrt{2 \gamma^{-1}  h}|\hat{q}_k + \Xi_{k+1}|-|q_k|\Big] \nonumber
\\ & \le f'(r_s(k)) \Big(-\frac{h\gamma}{2}\alpha |Z_k|+ \alpha h \gamma |q_k| + \sqrt{2 \gamma^{-1}  h}\mathbb{E}[|\hat{q}_k + \Xi_{k+1}|]-|q_k|\Big). \label{eq:expec_step1}
\end{align}
We set $\hat{r}_k=|\hat{q}_k|$. For the term in expectation it holds
\begin{align}
\mathbb{E}[|\hat{q}_k + \Xi_{k+1}|] & = \int_{-\hat{r}_k/2}^{\infty} |\hat{r}_k+2 u|(\varphi_{0,1}(u)-\varphi_{0,1}(u+\hat{r}_k))\rmd u \nonumber
\\ & = \int_{-\hat{r}_k/2}^{\infty} (\hat{r}_k+2 u)\varphi_{0,1}(u)\rmd u- \int_{\hat{r}_k/2}^{\infty} (2 u-\hat{r}_k)\varphi_{0,1}(u)\rmd u \nonumber
\\ & = \int_{-\hat{r}_k/2}^{\hat{r}_k/2} (\hat{r}_k+2 u)\varphi_{0,1}(u)\rmd u+ \int_{\hat{r}_k/2}^{\infty} 2\hat{r}_k\varphi_{0,1}(u)\rmd u \nonumber
\\ & = \int_{-\hat{r}_k/2}^{\hat{r}_k/2} \hat{r}_k\varphi_{0,1}(u)\rmd u+ \int_{\hat{r}_k/2}^{\infty} 2\hat{r}_k\varphi_{0,1}(u)\rmd u= \hat{r}_k=|\hat{q}_k|. \label{eq:exp_1moment}
\end{align}
Inserting this estimate in \eqref{eq:expec_step1} and using twice $|Z_k|\ge 4|q_k|$, it holds 
\begin{align*}
\mathbb{E}[\rho_{k+1}-\rho_k]& 
 \le  -f'(r_s(k)) \frac{h\gamma}{4}\alpha |Z_k| \le - f'(r_s(k))\min\Big( \frac{h\gamma }{8}, \frac{h\gamma\alpha}{2} \Big)r_s(k).
\end{align*}
By \eqref{eq:f_conseq4}, it holds 
\begin{align*}
\mathbb{E}[\rho_{k+1}]&\le \Big(1-f'(R_1) \min\Big( \frac{h\gamma }{8}, \frac{h\gamma\alpha}{2} \Big)\Big)\rho_k.
\end{align*}
\textit{Step 2:} Let $|Z_k| < 4 |q_k|$ and $|q_k|\ge \sqrt{2\gamma^{-1} h}$.
By \eqref{eq:EM}, Lipschitz continuity of $\nabla U$, \eqref{eq:alpha}, \eqref{eq:exp_1moment} and Taylor expansion
\begin{align}
\mathbb{E}[\rho_{k+1}-\rho_k]&\le \mathbb{E}[f(\alpha |Z_k|+ \alpha h \gamma |q_k|+\sqrt{2 \gamma^{-1}  h}|\hat{q}_k + \Xi_{k+1}|)- f(\alpha|Z_k|+|q_k|) ] \nonumber
\\ &= f'(r_s(k)) \mathbb{E}[(\alpha h \gamma |q_k|+\sqrt{2 \gamma^{-1}  h}|\hat{q}_k + \Xi_{k+1}|-|q_k|)] \nonumber
\\ & + \mathbb{E}\Big[\int_{r_s(k)}^{\bar{r}_s(k)} (\bar{r}_s(k)-t)f''(t)\rmd t\Big] \nonumber
\\ & = f'(r_s(k)) (\alpha h \gamma |q_k|)
+ \mathbb{E}\Big[\int_{r_s(k)}^{\bar{r}_s(k)} (\bar{r}_s(k)-t)f''(t)\rmd t\Big], \label{eq:2nd_case}
\end{align}
where $\bar{r}_s(k)=\alpha |Z_k|+ \alpha h \gamma |q_k|+\sqrt{2 \gamma^{-1}  h}|\hat{q}_k + \Xi_{k+1}|$.
To bound the term in expectation, we split 
\begin{align}
\mathbb{E}\Big[\int_{r_s(k)}^{\bar{r}_s(k)} (\bar{r}_s(k)-t)f''(t)\rmd t\Big]&= \mathbb{E}\Big[\int_{r_s(k)}^{\bar{r}_s(k)} (\bar{r}_s(k)-t)f''(t)\rmd t \1_{\bar{A}}\Big] \nonumber
\\ & +\mathbb{E}\Big[\int_{r_s(k)}^{\bar{r}_s(k)} (\bar{r}_s(k)-t)f''(t)\rmd t \1_{\bar{A}^c}\Big], \label{eq:exp_secderiv}
\end{align}
where
\begin{align*}
\bar{A}=\Big\{\Xi_{k+1}=-\hat{q}_k \Big\} \quad \text{and} \quad \bar{A}^c=\Big\{\Xi_{k+1}\neq -\hat{q}_k \Big\}=\Big\{\Xi_{k+1}= 2(e_k\cdot \xi_{k+1})e_k \Big\}.
\end{align*}
For the first term in \eqref{eq:exp_secderiv} we observe
\begin{align} 
&\mathbb{E}\Big[\int_{r_s(k)}^{\bar{r}_s(k)} (\bar{r}_s(k)-t)f''(t)\rmd t \1_{\bar{A}}\Big] \nonumber
\\&=\mathbb{E}\Big[\int_{\frac{1}{2}(r_s(k)+\bar{r}_s(k))}^{\bar{r}_s(k)} (\bar{r}_s(k)-t)f''(t)\rmd t \ \1_{\bar{A}}\Big] 
 +\mathbb{E}\Big[\int_{r_s(k)}^{\frac{1}{2}(r_s(k)+\bar{r}_s(k))} (\bar{r}_s(k)-t)f''(t)\rmd t \ \1_{\bar{A}}\Big] \nonumber
 \\ & \le \mathbb{E}\Big[\frac{3(\bar{r}_s(k)-r_s(k))^2}{8}\max_{t\in[\frac{1}{2}(\bar{r}_s(k)+r_s(k)),r_s(k)]}f''(t) \ \1_{\bar{A}}\Big] \nonumber
 \\ & + \mathbb{E}\Big[\frac{(\bar{r}_s(k)-r_s(k))^2}{8}\max_{t\in[\bar{r}_s(k),\frac{1}{2}(\bar{r}_s(k)+r_s(k))]}f''(t) \ \1_{\bar{A}}\Big] \nonumber
 \\ & \le \frac{3(2\gamma^{-1} h)(1-\alpha \gamma h)^2|\hat{q}_k|^2}{8} \max_{t\in[r_s(k)-(1-\alpha h \gamma)/2|q_k|,r_s(k)]}f''(t)\mathbb{E}[ \ \1_{\bar{A}}]. \label{eq:case2_contrcoup}
\end{align}
Note that in the last step we ignored the last summand since $f''(t)$ is negative.

For the second term in \eqref{eq:exp_secderiv} we define the set 
\begin{align*}
A'=\Big\{\sqrt{2\gamma^{-1} h }|\hat{q}_k+\Xi_{k+1}|\le |q_k|-\sqrt{2\gamma^{-1} h}  \Big\}.
\end{align*}
Then,
\begin{align}
\mathbb{E}& \Big[\int_{r_s(k)}^{\bar{r}_s(k)} (\bar{r}_s(k)-t)f''(t)\rmd t \ \1_{\bar{A}^c}\Big] \nonumber
\\ & = \mathbb{E}\Big[\int_{r_s(k)}^{\frac{1}{2}(\bar{r}_s(k)+r_s(k))} (\bar{r}_s(k)-t)f''(t)\rmd t \ \1_{\bar{A}^c}\Big]+\mathbb{E}\Big[\int_{\frac{1}{2}(\bar{r}_s(k)+r_s(k))}^{\bar{r}_s(k)} (\bar{r}_s(k)-t)f''(t)\rmd t \ \1_{\bar{A}^c}\Big] \nonumber
\\ & \le \mathbb{E}\Big[\frac{3(\bar{r}_s(k)-r_s(k))^2}{8} \max_{t\in[\frac{1}{2}(\bar{r}_s(k)+r_s(k)), r_s(k)]}f''(t) \ \1_{\bar{A}^c}\ \1_{A'}\Big] \nonumber
\\ & +\mathbb{E}\Big[\frac{(\bar{r}_s(k)-r_s(k))^2}{8} \max_{t\in[\bar{r}_s(k), \frac{1}{2}(\bar{r}_s(k)+ r_s(k))]}f''(t)\ \1_{\bar{A}^c}\ \1_{A'}\Big] \nonumber
\\ & \le \mathbb{E}\Big[\frac{3(\bar{r}_s(k)-r_s(k))^2}{8} \max_{t\in[\frac{1}{2}(\bar{r}_s(k)+r_s(k)), r_s(k)]}f''(t) \ \1_{\bar{A}^c}\ \1_{A'}\Big]. \label{eq:case2_reflcoup}
\end{align}
To bound the second derivative of $f$ in \eqref{eq:case2_contrcoup} and \eqref{eq:case2_reflcoup}, we observe for $t\in [\frac{1}{2}(\bar{r}_s(k)+r_s(k)), r_s(k)]$ the bounds $t\ge (1/2)r_s(k)$, $\exp(-128 \alpha \gamma^2 \frac{t^2}{2})\ge \exp(-128 \alpha \gamma^2\frac{r_s(k)^2}{2})$ and $\psi(t)\ge \frac{1}{2}\psi(r_s(k))$. Further for $s>t$, $\exp(-128 \alpha \gamma^2\frac{t^2}{2})\ge \exp(-128 \alpha \gamma^2\frac{s^2}{2})$. Hence by \eqref{eq:f_conseq} for $t\in [\frac{1}{2}(\bar{r}_s(k)+r_s(k)), r_s(k)]$,
\begin{align}
f''(t) & =-128 \alpha \gamma^2 t f'(t)-\frac{\hat{c}\gamma}{2}\int_0^t \phi(s) \rmd s \le -128 \alpha \gamma^2 \frac{r_s(k)}{2} f'(t)-\frac{\hat{c}\gamma}{2}\int_0^{\frac{r_s(k)}{2}} \phi(s) \rmd s \nonumber
\\ &\le -128 \alpha \gamma^2 \frac{r_s(k)}{2}\frac{1}{2} f'(r_s(k))-\frac{\hat{c}\gamma}{4}\int_0^{r_s(k)} \phi(s) \rmd s \nonumber
\\ & \le -128 \alpha \gamma^2 \frac{r_s(k)}{4} f'(r_s(k))-\frac{\hat{c}\gamma}{4}f(r_s(k)). \label{eq:conseq_f2}
\end{align}
Inserting these estimates in  \eqref{eq:case2_contrcoup} and \eqref{eq:case2_reflcoup}, applying $|q_k|\ge \sqrt{2\gamma^{-1} h}$ and using that by \eqref{eq:condition_h} $\alpha\gamma h \le 1/2$, we obtain
\begin{align} \label{eq:2nd_case1}
\mathbb{E} & \Big[\int_{r_s(k)}^{\bar{r}_s(k)} (\bar{r}_s(k)-t)f''(t)\rmd t \1_{\bar{A}}\Big] \nonumber
\\ & = 
\mathbb{E}\Big[\int_{r_s(k)}^{\bar{r}_s(k)} (\bar{r}_s(k)-t)f''(t)\rmd t \1_{\bar{A}}\Big]
 + \mathbb{E} \Big[\int_{r_s(k)}^{\bar{r}_s(k)} (\bar{r}_s(k)-t)f''(t)\rmd t \ \1_{\bar{A}^c}\Big]  \nonumber
\\ & \le \frac{3(2\gamma^{-1} h)}{32} \Big(-128 \alpha \gamma^2 \frac{r_s(k)}{4} f'(r_s(k))-\frac{\hat{c}\gamma}{4}f(r_s(k))\Big) \mathbb{E}[ \ \1_{\bar{A}}] \nonumber
\\ &  +\Big(-128 \alpha \gamma^2\frac{r_s(k)}{4} f'(r_s(k))-\frac{\hat{c}\gamma}{4}f(r_s(k))\Big) \mathbb{E}\Big[\frac{3(\bar{r}_s(k)-r_s(k))^2}{8}  \ \1_{\bar{A}^c}\ \1_{A'}\Big].
\end{align}
For the expectation in the last term, it holds by \eqref{eq:coupl_contr} and the definition of $A'$
\begin{align*}
 \mathbb{E} & \Big[\frac{3(\bar{r}_s(k)-r_s(k))^2}{8}  \ \1_{\bar{A}^c}\ \1_{A'}\Big] 
 \\ &  = \int_{-\infty}^{\infty}\frac{3(\alpha h \gamma |q_k|+\sqrt{2\gamma^{-1}h} ||\hat{q}_k|+2u|-|q_k|)^2}{8} \ \1_{\{||\hat{q}_k|+2u|\le |\hat{q}_k|-1\}} (\varphi(u)-\varphi(u+|\hat{q}_{k}|))^+\rmd u
 \\ & = \int_{-\infty}^{-1/2}\frac{3(\alpha h \gamma |q_k|+\sqrt{2\gamma^{-1}h} 2u )^2}{8} (\varphi(u)-\varphi(u+|\hat{q}_{k}|))^+\rmd u
 \\ & = \frac{3\gamma^{-1} h}{4}\int_{-\infty}^{-1/2}(\alpha h \gamma |\hat{q}_k|+ 2u )^2 (\varphi(u)-\varphi(u+|\hat{q}_{k}|))^+\rmd u
  \\ & \ge  \frac{3\gamma^{-1} h}{4}\int_{-\infty}^{-1/2}(\alpha \sqrt{h \gamma/2} \gamma R_1 + 2u )^2 (\varphi(u)-\varphi(u+|\hat{q}_{k}|))^+\rmd u
\\ & \ge  \frac{3\gamma^{-1} h}{4}\int_{-\infty}^{-1/2}(1/2 + 2u )^2 (\varphi(u)-\varphi(u+|\hat{q}_{k}|))^+\rmd u
\\ & \ge  \frac{3\gamma^{-1} h}{4}\int_{-\infty}^{-1/2}(1/4 ) (\varphi(u)-\varphi(u+|\hat{q}_{k}|))^+\rmd u,
\end{align*}
since by \eqref{eq:R_1}, $|q_k|\le r_s(k)\le R_1$ and by the assumption \eqref{eq:condition_h} on $h$,  $\alpha \sqrt{h\gamma/2}\gamma R_1\le 1/2$. Then, by \eqref{eq:coupl_contr}
\begin{align*}
 \mathbb{E} & \Big[\frac{3(\bar{r}_s(k)-r_s(k))^2}{8}  \ \1_{\bar{A}^c}\ \1_{A'}\Big] + \frac{3(2\gamma^{-1} h) }{32}\mathbb{E}[\1_{\bar{A}}]
 \\ & \ge  \frac{3\gamma^{-1} h}{4}\int_{-\infty}^{-1/2}\frac{1}{4} (\varphi(u)-\varphi(u+|\hat{q}|))^+\rmd u +\frac{3(2\gamma^{-1} h )}{32}\int_{-\infty}^{\infty} (\varphi(u)\wedge \varphi(u+|\hat{q}|))\rmd u
 \\ & \ge \frac{3 \gamma^{-1} h}{16}\int_{-\infty}^{-1/2}  \varphi(u)\rmd u \ge \frac{9}{160}\gamma^{-1} h.
\end{align*}
Inserting this estimate into the sum of \eqref{eq:2nd_case1} and plugging it back into \eqref{eq:2nd_case}, we obtain
\begin{align*}
\mathbb{E}[\rho_{k+1}-\rho_k]
 & \le  f'(r_s(k)) (\alpha h \gamma |q_k|)+ \Big(-128 \alpha \gamma^2 \frac{r_s(k)}{2}\frac{1}{2} f'(r_s(k))-\frac{\hat{c}\gamma}{4}f(r_s(k))\Big) \frac{9}{160}\gamma^{-1} h
\\ & \le -\frac{9}{640}\hat{c}h f(r_s(k))=-\frac{9}{640}\hat{c}h \rho_k.
\end{align*}

\textit{Step 3:} Let $|Z_k| < 4 |q_k|$ and $|q_k|< \sqrt{2\gamma^{-1} h}$.
By \eqref{eq:EM} and \Cref{ass}, it holds
\begin{align}
\mathbb{E}&[\rho_{k+1} - \rho_k]\le \mathbb{E}[f(r_s(k+1))- f(r_s(k))] \nonumber
\\ &\le \mathbb{E}[f(\alpha|Z_k|+\alpha h \gamma |q_k| +\sqrt{2\gamma^{-1} h } |\hat{q}_k+ \Xi_{k+1}|))- f(\alpha|Z_k|+|q_k|)] \nonumber
\\ & \le \mathbb{E}[f'(r_s(k))(\alpha h \gamma |q_k|+\sqrt{2\gamma^{-1} h } |\hat{q}_k+ \Xi_{k+1}|-|q_k| )] 
+ \mathbb{E}\Big[\int_{r_s(k)}^{\bar{r}_s(k) }   (\bar{r}_s(k)-t)f''(t)\rmd t \Big] \nonumber
\\ & \le f'(r_s(k))\alpha h \gamma |q_k| + \mathbb{E}\Big[\int_{r_s(k)}^{\bar{r}_s(k)}  (\bar{r}_s(k)-t)f''(t)\rmd t \Big], \label{eq:3rd_case}
\end{align}
where \eqref{eq:exp_1moment} is applied in the last step and $\bar{r}_s(k)=\alpha |Z_{k}|+\alpha h \gamma |q_k|+\sqrt{2\gamma^{-1}h } |\hat{q}_k+ \Xi_{k+1}|$. To bound the second term, consider the set 
\begin{align*}
A=\Big\{ |q_k|+ 2\sqrt{2\gamma^{-1} h} \le \sqrt{2\gamma^{-1} h }|\hat{q}_k+ \Xi_{k+1}|\le  |q_k|+ 6\sqrt{2\gamma^{-1} h}  \Big\}.
\end{align*}
Then, using the non-positivity of $f''$ we bound the expectation by 
\begin{align*}
\mathbb{E} \Big[\int_{r_s(k)}^{\bar{r}_s(k)}  (\bar{r}_s(k)-t)f''(t)\rmd t \Big]
 &\le \mathbb{E} \Big[\int_{r_s(k)}^{\bar{r}_s(k)}  (\bar{r}_s(k)-t)f''(t)\rmd t \ \1_{A} \Big].
\\ & \le \mathbb{E}\Big[\int_{\frac{1}{2}(r_s(k)+\bar{r}_s(k)) }^{\bar{r}_s(k)}  (\bar{r}_s(k)-t)f''(t)\rmd t \ \1_{A} \Big]
\\ & \le \mathbb{E}\Big[\frac{(\bar{r}_s(k)-r_s(k))^2}{8} \max_{t\in [\frac{1}{2}(r_s(k)+\bar{r}_s(k)), \bar{r}_s(k)] }f''(t)\ \1_{A} \Big].
\end{align*} 
By construction of $A$, it holds $\frac{1}{2}(r_s(k)+\bar{r}_s(k))\ge\sqrt{2\gamma^{-1} h}+r_s(k)$. Then by \eqref{eq:f_conseq}, it holds  for all $t\in [\frac{1}{2}(r_s(k)+\bar{r}_s(k)), \bar{r}_s(k)]$
\begin{align}
f''(t)
& = \Big(-f'(t)128 \alpha \gamma^2 t- \frac{\hat{c}\gamma}{2}\int_0^t \phi(s)\rmd s \Big) \nonumber
\\ & \le \Big(-\frac{1}{2}\psi(r_s(k))\phi(t)128 \alpha \gamma^2\sqrt{2\gamma^{-1} h} - \frac{\hat{c}\gamma}{2}\int_0^{r_s(k)+\sqrt{2h\gamma^{-1}}}\phi(s)\rmd s\Big) \nonumber
\\ & \le \Big(-\frac{1}{2}\psi(r_s(k))\phi(r_s(k))128 \alpha \gamma^2\frac{\phi(\bar{r}_s(k))}{\phi(r_s(k))}\sqrt{2\gamma^{-1} h} - \frac{\hat{c}\gamma}{2}\int_0^{\sqrt{2h\gamma^{-1}}}\phi(s)\rmd s\Big). \label{eq:f_conseq3}
\end{align}
Note that the first inequality holds since $t\ge \sqrt{2\gamma^{-1} h}$ and $\psi(t)\ge \psi(r_s(k))/2$ since $\psi(x)\in[1/2,1]$ for all $x\ge 0$.
Since by \eqref{eq:condition_h} it holds $8(6+\alpha h \gamma)\le 50$, $(6+\alpha h \gamma)(8+\alpha h \gamma)\le 50$,
$128 \alpha \gamma^2(50\alpha+50)(\gamma^{-1} h) \le 2/3$ and $128 \alpha \gamma^2 h\gamma^{-1} \le 2/3$, we obtain
\begin{align*}
\frac{\phi(\bar{r}_s(k))}{\phi(r_s(k))} & = \exp\Big(- 128 \alpha \gamma^2 \frac{\bar{r}_s(k)^2-r_s(k)^2 }{2}\Big)
\\ & \ge  \exp\Big(- 128 \alpha \gamma^2 \frac{(r_s(k)+ \sqrt{2\gamma^{-1} h}(6+\alpha h \gamma))^2-r_s(k)^2 }{2}\Big)
\\ & \ge  \exp\Big(- 128 \alpha \gamma^2 \frac{(2(4\alpha+1)2\gamma^{-1} h(6+\alpha h \gamma)+ 2\gamma^{-1} h(6+\alpha h \gamma)^2) }{2}\Big)
\\ & \ge  \exp\Big(- 128 \alpha \gamma^2 \frac{(2(4\alpha)2\gamma^{-1} h(6+\alpha h \gamma)+ 2\gamma^{-1} h(6+\alpha h \gamma)(8+\alpha h \gamma)) }{2}\Big)\ge \frac{1}{2},
\end{align*}
and
\begin{align}
\frac{r_s(k)}{\sqrt{2\gamma^{-1}h}}\int_0^{\sqrt{2h\gamma^{-1}}}\phi(s)\rmd s & \ge \frac{r_s(k)}{\sqrt{2\gamma^{-1}h}}\sqrt{2h\gamma^{-1}}\phi(\sqrt{2h\gamma^{-1}})  \ge \int_0^{r_s(k)} \phi(s)\rmd s 
\phi(\sqrt{2h\gamma^{-1}}) \nonumber
\\ & \ge f(r_s(k)) \exp\Big(-128 \alpha \gamma^2 \frac{1}{2}(2h\gamma^{-1})\Big)\ge \frac{1}{2} f(r_s(k)). \label{eq:f_conseq2}
\end{align}
Inserting these two bounds in $f''(t)$ and taking the maximum over $t\in [\frac{1}{2}(r_s(k)+\bar{r}_s(k)), \bar{r}_s(k)]$ it holds
\begin{align*}
&\max_{t\in [\frac{1}{2}(r_s(k)+\bar{r}_s(k)), \bar{r}_s(k)] } f''(t)
\le -\frac{1}{4}f'(r_s(k))128 \alpha \gamma^2\sqrt{2\gamma^{-1} h} - \frac{\hat{c}\gamma}{4}\frac{\sqrt{2\gamma^{-1}h}}{r_s(k)}f(r_s(k))
\end{align*}
which yields
\begin{align*}
\mathbb{E}& \Big[\int_{r_s(k)}^{\bar{r}_s(k)}  (\bar{r}_s(k)-t)f''(t)\rmd t \Big] 
\\ & \le \frac{(\alpha h \gamma+2)^2(2\gamma^{-1} h)}{8} \Big(-32 \alpha \gamma^2 f'(r_s(k)) \sqrt{2\gamma^{-1} h} - \frac{\hat{c}\gamma}{4}\frac{\sqrt{2\gamma^{-1} h}}{(4\alpha +1)|q_k|}
f(r_s(k))\Big)\mathbb{E}\Big[ \1_{A} \Big].
\end{align*}
Note that we used $r_s(k)\le (4\alpha+1) |q_k|$.
For the expectation $\mathbb{E}[ \1_{A}]$ it holds 
\begin{align*}
    \mathbb{E}[\1_{A}]& = \int_{-|\hat{q}_k|/2}^{\infty} \1_{\{2\sqrt{2\gamma^{-1} h}+|{q}_k| \le \sqrt{2\gamma^{-1} h} ||\hat{q}_k|+2 u| \le |{q}_k|+ 6\sqrt{2\gamma^{-1} h}\}}(\varphi(u)-\varphi(u+|\hat{q}_k|))\rmd u
    \\ & =\int_{1}^3 \varphi(u)\rmd u - \int_{1+|q_k|/\sqrt{2\gamma^{-1} h}}^{3+|{q}_k|/\sqrt{2\gamma^{-1} h}} \varphi(u)\rmd u =: F(|q_k|),
\end{align*}
where the second step holds since for $u\ge -|\hat{q}_k|/2$, the restriction on the set $A$ implies $1\le u \le 3$.
By \cite[Lemma 3.4]{chak2023reflection} and since $|q_k|\le \sqrt{2\gamma^{-1} h}$, this term is bounded from below by 
\begin{align}
     \mathbb{E}[\1_{A}]&\ge \min\Big( \frac{F( \sqrt{2\gamma^{-1} h})}{\sqrt{2\gamma^{-1} h}}, F'(0)\Big) |q_k| \nonumber
     \\ & = \min \Big(\int_{1}^3 \varphi(u)\rmd u - \int_{1+1 }^{3+1} \varphi(u)\rmd u , \frac{1}{\sqrt{2\pi}}\Big(e^{-\frac{1^2}{2}}-e^{-\frac{3^2}{2}}\Big)\Big) \frac{|q_k|}{\sqrt{2\gamma h^{-1}}}\ge \frac{1}{8}\frac{|q_k|}{\sqrt{2\gamma h^{-1}}}. \label{eq:expecA}
\end{align}
Hence, we obtain
\begin{align*}
\mathbb{E}& \Big[\int_{r_s(k)}^{\bar{r}_s(k)}  (\bar{r}_s(k)-t)f''(t)\rmd t \Big] 
\\ & \le \frac{1}{8}\frac{(\alpha h \gamma+2)^2(2\gamma^{-1} h)}{8} \Big(-32 \alpha \gamma^2 f'(r_s(k))|q_k| - \frac{\hat{c}\gamma}{4(4\alpha +1)}f(r_s(k))\Big)
\\ & \le \Big(-4(\gamma^{-1} h)\alpha \gamma^2 f'(r_s(k))|q_k| -  \frac{1}{8}(\gamma^{-1} h)\frac{\hat{c}\gamma}{4(4\alpha +1)}f(r_s(k))\Big).
\end{align*}
Inserting this bound in \eqref{eq:3rd_case}, we obtain
\begin{align*}
\mathbb{E}[\rho_{k+1} - \rho_k]  
 &\le f'(r_s(k))\alpha h \gamma |q_k|-4(\gamma^{-1} h) \alpha \gamma^2 f'(r_s(k))|q_k| -  (\gamma^{-1} h)\frac{\hat{c}\gamma}{32(4\alpha +1)}f(r_s(k))
\\ & \le -  (\gamma^{-1} h)\frac{\hat{c}\gamma}{32(4\alpha +1)}f(r_s(k)).
\end{align*}
Combining the three steps we obtain for $D_{\mathcal{K}}+\epsilon r_l(k)> r_s(k)$
\begin{align}
    \mathbb{E}[\rho_{k+1}] &\le \mathbb{E}[f(r_s(k+1))]\le (1-c_2h) f(r_s(k))=(1-c_2h)\rho_k
\end{align}
with
\begin{align*}
    c_2=\min\Big(\frac{f'(R_1)\gamma}{8},\frac{f'(R_1)\gamma \alpha}{2},\frac{9\hat{c}}{649}, \frac{\hat{c}}{32(4\alpha +1)} \Big).
\end{align*}
Combining this estimate with the first case ($D_{\mathcal{K}}+\epsilon r_l(k)\le r_s(k)$), we obtain 
\begin{align}
     \mathbb{E}[\rho_{k+1}] &\le(1-ch)\rho_k
\end{align}
with $c$ given in \eqref{eq:c}.

\end{proof}

\begin{lemma}\label{lem:apriori_bounds} Consider the continuous kinetic Langevin dynamics $(X_{t},V_{t})_{t\geq 0}$ with initial distribution $\mu_{\infty}$.
    Let $l\in \mathbb{N}$.
    Then, for all $k\in \{0, \ldots , l-1\}$
    \begin{align*}
        &\mathbb{E}\Big[\int_0^h |X_{kh+s}-X_{kh}|\rmd s\Big]\le \frac{h^2}{2}\sqrt{d} 
        \\ & \mathbb{E}\Big[\Big|\sum_{i=0}^k\int_0^h V_{ih+s}-V_{ih}\rmd s\Big|\Big]\le \frac{h^2}{2}l(\sqrt{L}+\gamma) \sqrt{d}+  \sqrt{2\gamma dlh}h.   
    \end{align*}
\end{lemma}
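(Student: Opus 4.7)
The strategy exploits the stationarity of $(X_t,V_t)_{t\ge 0}$ under $\mu_\infty$: for every $t\ge 0$ we have $(X_t,V_t) \sim \mu\otimes \mathcal{N}(0,I_d)$, so $\mathbb{E}[|V_t|] \le \sqrt{\mathbb{E}[|V_t|^2]} = \sqrt{d}$. The first inequality is then immediate: writing $X_{kh+s}-X_{kh} = \int_0^s V_{kh+u}\,\rmd u$ and applying the triangle inequality, Fubini, and the preceding bound gives
\begin{align*}
\mathbb{E}\Big[\int_0^h |X_{kh+s}-X_{kh}|\rmd s\Big] \le \int_0^h\int_0^s \mathbb{E}[|V_{kh+u}|]\rmd u\,\rmd s \le \tfrac{h^2}{2}\sqrt{d}.
\end{align*}

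For the second inequality, I would integrate the velocity SDE in \eqref{eq:LD} to write
\begin{align*}
V_{ih+s} - V_{ih} = -\int_0^s \nabla U(X_{ih+u})\rmd u - \gamma \int_0^s V_{ih+u}\rmd u + \sqrt{2\gamma}(B_{ih+s} - B_{ih}),
\end{align*}
then integrate over $s\in[0,h]$ and sum over $i\in\{0,\dots,k\}$. The two drift contributions are handled by triangle inequality and Fubini, using $\mathbb{E}[|V_t|]\le \sqrt{d}$ and the a priori bound $\mathbb{E}[|\nabla U(X_t)|]\le \sqrt{Ld}$ coming from stationarity: the integration-by-parts identity $\int|\nabla U|^2 e^{-U}\rmd x = \int \Delta U\, e^{-U}\rmd x$ combined with $\Delta U \le Ld$ (since $\nabla U$ is $L$-Lipschitz) yields $\mathbb{E}_\mu[|\nabla U|^2]\le Ld$, hence $\mathbb{E}[|\nabla U(X_t)|]\le\sqrt{Ld}$ by Cauchy--Schwarz. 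Summing gives a total drift contribution of at most $\frac{h^2}{2}l(\sqrt{L}+\gamma)\sqrt{d}$.

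For the noise term, the key observation is that the stochastic integrals $\int_0^h(B_{ih+s}-B_{ih})\rmd s$ for distinct $i$ depend on Brownian increments over disjoint intervals and are therefore independent centred Gaussians. Using the rewriting
\begin{align*}
\int_0^h (B_{ih+s}-B_{ih})\rmd s = \int_{ih}^{(i+1)h}\bigl((i+1)h - u\bigr)\rmd B_u,
\end{align*}
each coordinate has variance $\int_0^h r^2\rmd r = h^3/3$, so by independence the total sum has coordinate-wise variance $(k+1)h^3/3\le lh^3/3$. Therefore
\begin{align*}
\sqrt{2\gamma}\,\mathbb{E}\Big[\Big|\sum_{i=0}^k\int_0^h(B_{ih+s}-B_{ih})\rmd s\Big|\Big] \le \sqrt{2\gamma \cdot l h^3 d/3}\le h\sqrt{2\gamma d l h}.
\end{align*}
Combining the drift and noise bounds gives the claimed estimate.

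The computations are elementary; the only mildly subtle point is the justification of the integration-by-parts identity for $\mathbb{E}_\mu[|\nabla U|^2]$, which is standard under Assumption \ref{ass} as $\mu\propto e^{-U}$ has Gaussian-like tails and $\nabla U$ grows at most linearly. No other step presents a real obstacle.
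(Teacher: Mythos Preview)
Your proposal is correct and follows essentially the same approach as the paper's proof: integrate the SDE, split into drift and noise terms, use stationarity for the drift moments, and handle the Brownian part via its Gaussian law. The only minor differences are that you supply a self-contained integration-by-parts argument for $\mathbb{E}_{\mu_\infty}[|\nabla U|^2]\le Ld$ (the paper instead cites an external lemma), and you compute the variance of the noise integral via the stochastic-Fubini representation $\int_0^h(B_{ih+s}-B_{ih})\rmd s=\int_{ih}^{(i+1)h}((i+1)h-u)\rmd B_u$, whereas the paper observes that for fixed $s$ the sum $\sum_i(B_{ih+s}-B_{ih})$ is $\mathcal{N}(0,(k+1)sI_d)$ and then integrates in $s$; both routes give the same final bound.
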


\begin{proof}[Proof of \Cref{lem:apriori_bounds}]
    By \eqref{eq:LD}, it holds
    \begin{align} \label{eq:boundX}
        \int_0^h |X_{kh+s}-X_{kh}|\rmd s = \int_0^h |\int_0^s V_{kh+r}\rmd r|\rmd s\le \int_0^t \int_0^s |V_{kh+r}|\rmd r\rmd s.
    \end{align}
    Taking expectation yields 
    \begin{align*}
        \mathbb{E}\Big[\int_0^h |X_{kh+s}-X_{kh}|\rmd s\Big]\le \mathbb{E}\Big[\int_0^t \int_0^s |V_{kh+r}|\rmd r\rmd s\Big]= \int_0^t \int_0^s \mathbb{E}[|V_{kh+r}|]\rmd r\rmd s=\frac{h^2}{2}\sqrt{d},
    \end{align*}
    since for $(X_{kh+r},V_{kh+r})\sim \mu_{\infty}$ for all $k\in\mathbb{N}$ and $ r\in[0,h]$ we have $$\|V_{kh+r}\|_{L^1(\mu_{\infty})}\le \sqrt{d}$$ by \cite[Lemma A.3]{deligiannidis2021randomized}.
    Further by \eqref{eq:LD}
    \begin{align} \label{eq:boundV}
        \Big|\sum_{i=0}^k\int_0^h V_{ih+s}&-V_{ih}\rmd s\Big|  = \Big|\sum_{i=0}^k\int_0^h \int_0^s -\nabla U(X_{ih+r})-\gamma V_{ih+r} \rmd r+ \sqrt{2\gamma}\int_{0}^s \rmd B_{ih+r} \rmd s\Big| \nonumber
        \\ & \le \sum_{i=0}^k\int_0^h \int_0^s (|\nabla U(X_{ih+r})| + \gamma|V_{ih+r}|) \rmd r \rmd s  
       +  \sqrt{2\gamma}\Big|\sum_{i=0}^k\int_0^h (B_{ih+s}-B_{ih})\rmd s\Big|.
    \end{align}
    We note that $\sum_{i=0}^k (B_{ih+s}-B_{ih})$ is a normally distributed random variable with mean zero and covariance matrix $((k+1)s) I_d$. Further, for $(X_{ih+r},V_{ih+r})\sim \mu_{\infty}$ for all $i\in\mathbb{N}$,  $\|V_{ih+r}\|_{L^1(\mu_{\infty})}\le \sqrt{d}$ and $$\|\nabla U(X_{ih+r})\|_{L^1(\mu_{\infty})}\le \sqrt{Ld}$$ by \cite[Lemma A.3]{deligiannidis2021randomized}.
    Then since $k<l$,
    \begin{align*}
        \mathbb{E}[|\sum_{i=0}^k\int_0^h V_{ih+s}-V_{ih}\rmd s|] &\le \mathbb{E}[\sum_{i=0}^k\int_0^h \int_0^s (|\nabla U(X_{ih+r})| + \gamma|V_{ih+r}|) \rmd r \rmd s  
       \\ & +\sqrt{2\gamma}\Big|\sum_{i=0}^k\int_0^h (B_{ih+s}-B_{ih})\rmd s\Big|]
       \\ & \le \frac{h^2}{2}l(\sqrt{Ld}+\gamma \sqrt{d})+ \sqrt{2\gamma dlh}h.
    \end{align*}

\end{proof}

\begin{proof}[Proof of \Cref{thm:strong_accuracy}]
Recall that we use $(\mathbf{X},\mathbf{V})$ to refer to the discretization and $(X,V)$ to refer to the continuous dynamics.

Set $ l=\lceil \frac{1}{h \gamma\max(\alpha, 1+ \alpha/2)} \rceil$. 
    By \eqref{eq:contr_k_step}, we have
    \begin{align*}
        \mathcal{W}_{\rho}(\mu_{\infty},\mu_{h,\infty})&=\mathcal{W}_{\rho}(\mu_{\infty}\pi^l,\mu_{h,\infty}\pi_h^l)\le \mathcal{W}_{\rho}(\mu_{\infty}\pi^l,\mu_{\infty}\pi_h^l)+\mathcal{W}_{\rho}(\mu_{\infty}\pi_h^l,\mu_{h,\infty}\pi_h^l)
        \\ & \le \mathcal{W}_{\rho}(\mu_{\infty}\pi^l,\mu_{\infty}\pi_h^l)+(1-ch)^l\mathcal{W}_{\rho}(\mu_{\infty},\mu_{h,\infty}).
    \end{align*}
    Hence, by $(1-ch)\le e^{-ch}$, $lh\ge \frac{1}{ \gamma\max(\alpha, 1+ \alpha/2)}$ and \eqref{eq:rho},
    \begin{align*}
        \mathcal{W}_{\rho}(\mu_{\infty},\mu_{h,\infty})&\le \frac{1}{1-(1-ch)^l}\mathcal{W}_{\rho}(\mu_{\infty}\pi^l,\mu_{\infty}\pi_h^l)\le (1-e^{-chl})^{-1}\mathcal{W}_{\rho}(\mu_{\infty}\pi^l,\mu_{\infty}\pi_h^l)
        \\ & \le (1-e^{-\frac{c}{h \gamma\max(\alpha, 1+ \alpha/2)}})^{-1}\mathbb{E}[\alpha|X_{hl}-\mathbf{X}_l|+|(X_{hl}-\mathbf{X}_l)+ \gamma^{-1}(V_{hl}-\mathbf{V}_l)|],
    \end{align*}
    with $(X_0,V_0)=(\mathbf{X}_0,\mathbf{V}_0)\sim \mu_{\infty}$ and where $(\mathbf{X}_k,\mathbf{V}_k)_{k\in \mathbb{N}}$ and $(X_s,V_s)_{s\geq 0}$ are synchronously coupled, i.e., $\xi_k=\int_{h(k-1)}^{hk}\rmd B_s$. Define the sequences $(a_k)_{k=0}^l$ and $(b_k)_{k=0}^l$ by 
    \begin{align*}
        a_k=\mathbb{E}[\alpha|X_{hk}-\mathbf{X}_k|]  \qquad \text{and} \qquad  b_k=\mathbb{E}[|(X_{hk}-\mathbf{X}_k)+ \gamma^{-1}(V_{hk}-\mathbf{V}_k)|].
    \end{align*}
    By using \eqref{eq:LD} and \eqref{eq:EM} iteratively and $a_0=b_0=0$, it holds
    \begin{align*}
        a_{k+1}&\le \mathbb{E}\Big[\alpha\Big|X_{kh}-\mathbf{X}_k+ \int_0^h (V_{hk+s}-\mathbf{V}_k)\rmd s\Big|\Big]
        \\ & \le \mathbb{E}\Big[\alpha\Big|X_{0}-\mathbf{X}_0+\sum_{i=0}^k\int_0^h (V_{hi+s}-V_{hi})\rmd s +\sum_{i=0}^k h(V_{hi}-\mathbf{V}_{i}) \Big|\Big]
        \\ & \le \mathbb{E}\Big[\alpha\Big|\sum_{i=0}^k\int_0^h (V_{hi+s}-V_{hi})\rmd s\Big|\Big] +\alpha\sum_{i=0}^k h\mathbb{E}\Big[\Big|V_{hi}-\mathbf{V}_{i} \Big|\Big] \le h\mathbf{M}_2+h\gamma  \sum_{i=1}^k( \alpha b_i+a_i)
    \end{align*}
    where using \Cref{lem:apriori_bounds} and $l h \le 2/(\gamma(1+\alpha))$, the constant $\mathbf{M_2}$ is given by
    \begin{align}
        h^{-1} \mathbb{E}\Big[\alpha\Big|\sum_{i=0}^k\int_0^h (V_{hi+s}-V_{hi})\rmd s\Big|\Big] &\le \frac{\alpha h}{2}l(\sqrt{L}+\gamma) \sqrt{d}+  \alpha \sqrt{2\gamma dlh}  \nonumber
        \\ &  \le \frac{\alpha}{\gamma(1+\alpha)}(\sqrt{L}+\gamma) \sqrt{d}+ 2\alpha\sqrt{\frac{d}{1+\alpha}}  =:\mathbf{M_2}. \label{eq:M2}
    \end{align}
    Further,
    \begin{align}
        b_{k+1} & \le b_k+ \mathbb{E}\Big[\int_0^h \gamma^{-1}  |\nabla U(X_{hk+s})-\nabla U(\mathbf{X}_k)|\rmd s \Big] \le b_k+\mathbb{E}\Big[\int_0^h \gamma^{-1} L |X_{hk+s}-\mathbf{X}_k|\rmd s \Big] \nonumber
        \\ & \le b_k+ \frac{\alpha\gamma h}{2}a_k+  \mathbb{E}\Big[\int_0^h \gamma^{-1} L |X_{hk+s}-X_{kh}|\rmd s \Big] \nonumber
        \\ & \le b_0 + \sum_{i=0}^k \frac{\alpha\gamma h}{2}a_i+  \sum_{i=0}^k\mathbb{E}\Big[\int_0^h \gamma^{-1} L |X_{hi+s}-X_{ih}|\rmd s \Big]. \label{eq:seq_b_l}
    \end{align}
    Then by \Cref{lem:apriori_bounds} which provides bounds for the continuous time process and by the definition of $l$, 
    we have for all $1\le k<l$
    \begin{align}
        \sum_{i=0}^k\mathbb{E}\Big[\int_0^h |X_{ih+s}  -X_{ih}|\rmd s\Big] &\le  \frac{h^2}{2} l\sqrt{d}\le \frac{h}{\gamma(1+\alpha)}\sqrt{d} 
        =: h \mathbf{M}_1 \label{eq:M1}.
    \end{align}
    Since $a_0=0$,
    \begin{align*}
        b_{k+1}\le \sum_{i=1}^k \frac{\alpha\gamma h}{2}a_i+ \gamma^{-1} L h \mathbf{M}_1.
    \end{align*}
    Putting the bound for $a_{k+1}$ and $b_{k+1}$ together, we obtain
    \begin{align*}
        (a_{k+1}+b_{k+1})\le h(\gamma^{-1} L \mathbf{M}_1+\mathbf{M}_2)+ h\gamma \max(\alpha, 1+\alpha/2)\sum_{i=1}^k (a_{i}+b_{i}).
    \end{align*}
    We note that the sequence $(a_k+b_k)_{k\in \mathbb{N}}$ is bounded from above by the sequence $(c_k)_{k\in \mathbb{N}}$ (i.e., $a_k+b_k\le c_k$ for all $k$) satisfying 
    \begin{align*}
        c_{k+1}= h(\gamma^{-1} L \mathbf{M}_1+\mathbf{M}_2)+ h\gamma \max(\alpha, 1+\alpha/2)\sum_{i=1}^k c_i.
    \end{align*}
    and $c_1=h(\gamma^{-1} L \mathbf{M}_1+\mathbf{M}_2)$.
    Set $\lambda=\max(\alpha, 1+\alpha/2)$. For $(c_k)_{k\in \mathbb{N}}$ we observe
    \begin{align*}
        a_{k+1}+b_{k+1}& \le c_{k+1} = h(\gamma^{-1} L \mathbf{M}_1+\mathbf{M}_2)+ h\gamma \lambda\sum_{i=1}^{k-1} c_i +h\gamma \lambda c_k
        \\ & = (1+ h\gamma \lambda)c_k=(1+ h\gamma \lambda)^k c_1\le e^{h\gamma \lambda k}c_1= e^{h\gamma \lambda k} h(\gamma^{-1} L \mathbf{M}_1+\mathbf{M}_2) .
    \end{align*}
    
    Then, by the choice of $l$, \eqref{eq:M1}, \eqref{eq:M2} and \eqref{eq:alpha} , it holds
    \begin{align*}
        e^{h\gamma \lambda k} h(\gamma^{-1} L \mathbf{M}_1+\mathbf{M}_2)&= e^{1}h \Big(\gamma^{-1} L \frac{\sqrt{d}}{\gamma(1+\alpha)}+\frac{\alpha}{\gamma(1+\alpha)}(\sqrt{L}+\gamma)\sqrt{d}+2\alpha \sqrt{\frac{d}{1+\alpha}}\Big)
        \\ & = e^1 h (L\gamma^{-2} \sqrt{d})\Big(\frac{3 +\sqrt{L\gamma^{-2}}}{1+2L\gamma^{-2}}+ \frac{4}{\sqrt{1 +2L\gamma^{-2}}}\Big)\le 20 h L \gamma^{-2} \sqrt{d}
    \end{align*}
    and hence
    \begin{align}
        \mathcal{W}_{\rho}(\mu_{\infty},\mu_{h,\infty}) \le  (1-e^{-\frac{c}{ \gamma\max(\alpha, 1+ \alpha/2)}})^{-1}20 h L \gamma^{-2} \sqrt{d}\le  \Big(1+\frac{ \gamma (1+\alpha)}{c}\Big) 20 h L \gamma^{-2} \sqrt{d}.
    \end{align}
    where we used in the second step that $1/(1-e^{-x})\le 1+1/x$ for $x>0$.
    
\end{proof}

\begin{proof}[Proof of \Cref{thm:complexity_guarantee}]
    Applying the triangle inequality and combining \Cref{thm:contra_EM} and \Cref{thm:strong_accuracy}, it holds for all $k\in\mathbb{N}$
    \begin{align*}
        \mathcal{W}_{\rho}(\mu_{\infty}, \nu \pi_h^k)&\le \mathcal{W}_{\rho}(\mu_{\infty},\mu_{h,\infty})+\mathcal{W}_{\rho}(\mu_{h,\infty}, \nu\pi_h^k)
        \\ & \le h\Big(1+\frac{ \gamma(1+\alpha)}{c}\Big)20 L \gamma^{-2} \sqrt{d} +e^{-chk}\mathcal{W}_{\rho}(\mu_{h,\infty}, \nu).
    \end{align*}
    The bound in $\mathcal{W}_1$ is obtained by using the equivalence of the distance $\rho$ and the Euclidean distance in $\mathbb{R}^{2d}$.
\end{proof}

\subsection{BU and UBU} \label{sec:proofs_UBU}
To show \Cref{thm:BU_contr}, we first prove a local contraction result for the distance $r_l$.
\begin{proposition}\label{prop:case1_BU}
Let the potential $U$ be of the form $U := x^{T}Kx + G(x)$, where the symmetric and positive definite matrix $K$ satisfies $\kappa I_{d}\prec K \prec L_{K}I_{d}$ and $ G$ is convex outside a Euclidean ball, i.e. $(\nabla G(x)-\nabla G(y))\cdot (x-y)\ge 0$ for all $x,y \in \mathbb{R}^{d}$ such that $|x-y| > R$, now consider two iterates of the BU scheme $(\mathbf{X}_{k},\mathbf{V}_{k})_{k \in \mathbb{N}}$ and $(\mathbf{X}'_{k},\mathbf{V}'_{k})_{k \in \mathbb{N}}$ with synchronously coupled noise increments and metric $r_{l}$ between the iterates.  If $r^{2}_{l}((\mathbf{X}_{k},\mathbf{V}_{k}),(\mathbf{X}'_{k},\mathbf{V}'_{k}))\ge \mathcal{R}$ at iteration $k \in \mathbb{N}$, $ h < \min\{\frac{\gamma}{55L_K},\frac{1}{15\gamma }\}$ and $L_G\gamma^{-2}  \le \kappa/(13 L_G)$ we have that
\[
r^{2}_{l}((\mathbf{X}_{k+1},\mathbf{V}_{k+1}),(\mathbf{X}'_{k+1},\mathbf{V}'_{k+1})) \leq \Big(1-\frac{7}{8}\tau\gamma h\Big)r^{2}_{l}((\mathbf{X}_{k},\mathbf{V}_{k}),(\mathbf{X}'_{k},\mathbf{V}'_{k})),
\]
where $\tau = \min\{\frac{\kappa}{6\gamma^{2}},\frac{1}{16}\}$. If $R=0$, $\mathcal{R}=0$ and the restriction on $\gamma$ improves to $L_G \gamma^{-2}\le \frac{1}{6}$.
\end{proposition}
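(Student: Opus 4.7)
My plan is to mirror the proof of Proposition~\ref{thm:case1}. Under the synchronous coupling the noise cancels, so writing $\nabla U(x)-\nabla U(x')=(K+Q_k)(x-x')$ with
$Q_k=\int_0^1 \nabla^2 G(\mathbf{X}_k'+tZ_k)\,dt$ (satisfying $-L_G I_d\preceq Q_k\preceq L_G I_d$, and $Q_k\succeq 0$ whenever $|Z_k|>R$), the BU update on the difference process reads
\begin{equation*}
\begin{pmatrix}Z_{k+1}\\W_{k+1}\end{pmatrix}=P\begin{pmatrix}Z_k\\W_k\end{pmatrix},\quad
P=P_UP_B,\quad
P_B=\begin{pmatrix}I_d & 0\\ -h(K+Q_k) & I_d\end{pmatrix},\quad
P_U=\begin{pmatrix}I_d & \gamma^{-1}(1-\eta)I_d\\ 0 & \eta I_d\end{pmatrix}.
\end{equation*}
Letting $M$ be the symmetric matrix of Proposition~\ref{thm:case1} so that $r_l^2=(Z,W)^T M(Z,W)$, the goal is
\begin{equation*}
(Z,W)^T(P^TMP-M)(Z,W)\;\le\;-\tfrac{7}{8}\tau\gamma h\,(Z,W)^T M(Z,W) \quad\text{whenever }r_l^2\ge\mathcal{R}.
\end{equation*}

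The key step is to reduce to the Euler computation by Taylor-expanding $\eta=e^{-\gamma h}$. The condition \eqref{eq:condition_h_BU} yields $\gamma h\le 1/15$, so
$\gamma^{-1}(1-\eta)=h-\tfrac{\gamma h^2}{2}+O(\gamma^2 h^3)$ and $\eta=1-\gamma h+\tfrac{\gamma^2 h^2}{2}+O(\gamma^3 h^3)$ uniformly. A block-by-block expansion then gives
\begin{equation*}
P=I+hP_1+h^2 R+O(h^3),\qquad
P_1=\begin{pmatrix}0 & I_d\\ -(K+Q_k) & -\gamma I_d\end{pmatrix},\qquad
R=\begin{pmatrix}-(K+Q_k) & -\tfrac{\gamma}{2}I_d \\ \gamma(K+Q_k) & \tfrac{\gamma^2}{2}I_d\end{pmatrix},
\end{equation*}
where $P_1$ is exactly the Euler generator. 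Consequently
\begin{equation*}
P^TMP-M=h(P_1^TM+MP_1)+h^2 P_1^TMP_1+h^2(R^TM+MR)+O(h^3).
\end{equation*}

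For the first two terms I would reuse the block-wise estimates from the proof of Proposition~\ref{thm:case1} verbatim: they contribute at most $-2\tau\gamma h\,M+hL_G\gamma^{-1}R^2\,\mathds{1}_{|Z_k|\le R}$ once the corresponding conditions ($h(L_K\gamma^{-2}+2)\le\tfrac{1}{2\gamma}$ and the convexity threshold) are verified. The BU-specific correction $h^2(R^TM+MR)$ produces extra blocks of the form $h^2\gamma(K+Q_k)\gamma^{-2}$, $h^2\gamma^2/(2\gamma^2)\cdot I_d$, and $h^2\gamma\,\gamma^{-2}K$; after absorbing cross-terms via Young's inequality and using $(K+Q_k)^2\preceq 2L_K K+2Q_k^2$, each such block is dominated by $\tfrac{1}{16}\tau\gamma h\,M$ precisely when the slightly stronger thresholds $h\le \gamma/(55L_K)$ and $L_G^2\le\kappa\gamma^2/13$ are enforced, explaining the worsened constants $\sqrt{13 L_G^2/\kappa}$ and $55 L_K$ compared with Proposition~\ref{thm:case1}. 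The residual $O(h^3)$ piece is similarly negligible thanks to $\gamma h\le 1/15$. Adding everything yields $P^TMP-M\preceq -\tfrac{7}{8}\tau\gamma h M+hL_G\gamma^{-1}R^2\,\mathds{1}_{|Z_k|\le R}$, and invoking $r_l^2\ge\mathcal{R}=\tau^{-1}L_G R^2\gamma^{-2}$ converts the nonnegative remainder into $\tfrac{1}{8}\tau\gamma h\, r_l^2$, closing the bound. In the strongly convex case $R=0$ the matrix $Q_k$ is globally nonnegative, so the $R^2$ remainder disappears; then the constraint on $\gamma$ only needs to neutralise the discretisation correction coming from $R$, relaxing to $L_G\gamma^{-2}\le 1/6$.

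The main obstacle, as in the Euler case, is the bookkeeping of the quadratic block $h^2 P_1^T M P_1 + h^2(R^T M + M R)$: after the cross-terms are dispatched by Young's inequality, the crucial cancellation requires that the coefficient in front of $z^TKz$ coming from the $(1,1)$ block still beats the $\gamma^{-3}Q_k^2$ contribution inherited from the $Q_k$ off-diagonal. This is where the numerical constant $13$ (rather than $16$) appears: the extra $-h^2(K+Q_k)$ and $h^2\gamma(K+Q_k)$ entries in $R$ deplete part of the budget, so the strict convexity reserve $\kappa-13L_G^2/\gamma^2$ must remain positive. Once that inequality is verified, the remaining calculation is essentially identical to Proposition~\ref{thm:case1}, and the proof closes.
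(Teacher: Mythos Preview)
Your overall strategy---write $r_l^2(k+1)=(Z_k,W_k)^T P^TMP\,(Z_k,W_k)$, expand $P^TMP-M$ into a leading Euler-like piece plus BU corrections, and then repeat the block-wise Young inequalities of Proposition~\ref{thm:case1}---is exactly the paper's approach. The paper, however, does \emph{not} Taylor-expand $\eta=e^{-\gamma h}$. It writes $P=I+hP_1$ \emph{exactly}, with an $h$-dependent
\[
P_1=\begin{pmatrix}-\tfrac{1-\eta}{\gamma}(K+Q)&\tfrac{1-\eta}{\gamma h}I\\-\eta(K+Q)&-\tfrac{1-\eta}{h}I\end{pmatrix},
\]
and then peels off the genuinely second-order pieces into explicit block matrices $A_{h^2},B_{h^2},C_{h^2}$ using the elementary bounds $|h\gamma-1+\eta|\le\tfrac{h^2\gamma^2}{2}$ and $|h\gamma\eta-1+\eta|\le\tfrac{3h^2\gamma^2}{2}$. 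This avoids any $O(h^3)$ remainder and keeps all constants explicit. Your Taylor route is in principle equivalent, but ``the residual $O(h^3)$ piece is similarly negligible'' is not free: those terms carry factors of $L_K$, $L_G$, $\gamma$ and must be compared to $\tau\gamma h\,M$ with $\tau$ possibly as small as $\kappa/(6\gamma^2)$, so you would still have to write them out. Also note that the $\tau$ here differs from the Euler $\tau$, so $M$ itself changes and the Euler block estimates cannot be reused \emph{verbatim}---they have to be rechecked with $\tau\le 1/16$.

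There is one genuine accounting error. You conclude with
\[
P^TMP-M\preceq -\tfrac{7}{8}\tau\gamma h\,M+hL_G\gamma^{-1}R^2\,\1_{|Z_k|\le R},
\]
and then claim that $r_l^2\ge\mathcal{R}=\tau^{-1}L_G\gamma^{-2}R^2$ turns the remainder into $\tfrac{1}{8}\tau\gamma h\,r_l^2$. But $hL_G\gamma^{-1}R^2=\tau\gamma h\,\mathcal{R}\le\tau\gamma h\,r_l^2$, not $\tfrac{1}{8}\tau\gamma h\,r_l^2$; with your budget this gives $(-\tfrac{7}{8}+1)\tau\gamma h\,r_l^2>0$, i.e.\ no contraction. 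The paper's budget is $-2C_{\eta,\tau}\tau\gamma h\,M+C_{\eta,\tau}hL_G\gamma^{-1}R^2$ with $C_{\eta,\tau}:=1-2\tau(1-\eta)\in[\tfrac{7}{8},1]$; absorbing the remainder then yields $-C_{\eta,\tau}\tau\gamma h\,r_l^2\le-\tfrac{7}{8}\tau\gamma h\,r_l^2$. In other words, you must arrive at roughly $-\tfrac{7}{4}\tau\gamma h\,M$ (plus the indicator term) \emph{before} invoking $r_l^2\ge\mathcal{R}$, which means the BU correction $h^2(R^TM+MR)$ together with all higher-order debris must cost you at most $\tfrac{1}{4}\tau\gamma h\,M$, not merely $\tfrac{1}{16}$ per block as you sketch. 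Once you fix this budget (and either control the $O(h^3)$ remainder explicitly or switch to the paper's exact-$\eta$ bookkeeping), the argument closes.
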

\begin{proof}
We have that
\[
r_{l}^{2}((\mathbf{X}_{k+1},\mathbf{V}_{k+1}),(\mathbf{X}'_{k+1},\mathbf{V}'_{k+1})) = (Z_{k},W_{k}) P^{T} M P \cdot (Z_{k},W_{k}),
\]
where 
\[
M = \begin{pmatrix}
  \gamma^{-2} K + (1-2\tau)^{2}/2I_d & (1-2\tau)/2\gamma I_d\\
      (1-2\tau)/2\gamma I_d &  \gamma^{-2}I_d
\end{pmatrix},
\textnormal{ and }
P = \begin{pmatrix}
  I_d -\frac{1-\eta}{\gamma}h(K+Q) & \left(\frac{1-\eta}{\gamma}\right)I_d\\
      -h\eta(K + Q) &  \eta I_d
\end{pmatrix},
\]
where $\eta = \exp{\left(-\gamma h\right)}$, $K$ is the matrix defined by the quadratic term in the potential and $Q$ is defined by 
\[
Q = \int^{1}_{t=0}\nabla^{2}G(\mathbf{X}_{k} + t(\mathbf{X}'_{k}-\mathbf{X}_{k}))\rmd t,
\]
where $G$ is the non-quadratic term in the potential and $Q \succ 0$ for $|\mathbf{X}_{k}-\mathbf{X}'_{k}|>R$ and $-L_{G}I_{d} \prec Q \prec L_{G} I_{d}$ otherwise. 
It holds 
\begin{align*}
    r_{l}^{2}((\mathbf{X}_{k+1},\mathbf{V}_{k+1}),(\mathbf{X}'_{k+1},\mathbf{V}'_{k+1})) &  = r_l^2((\mathbf{X}_{k},\mathbf{V}_{k}),(\mathbf{X}'_{k},\mathbf{V}'_{k}))
    \\ & +(Z_k,W_k)^T ( h (M P_1+P_1^T M)+ h^2 P_1^T M P_1  ) (Z_k,W_k)
\end{align*}
with 
\begin{align} \label{eq:P_1}
    P_1=  \begin{pmatrix}
        -\frac{1-\eta}{\gamma}(K+Q) & \frac{1-\eta}{\gamma h}I_d \\ -\eta \left(K + Q \right)& -\frac{1-\eta}{h} I_d
    \end{pmatrix}.
\end{align}
It is sufficient to show that for all $(z,w)\in \mathbb{R}^{2d}$ with $r_l^2((z,w))\ge \mathcal{R}$, $(z,w)^T ( h (M P_1+P_1^T M)+ h^2 P_1^T M P_1  ) (z,w)\le - \gamma \tau h (z,w)^TM(z,w) $.
It holds
\begin{align*}
    &h (M P_1+P_1^T M)= h \begin{pmatrix}
        \frac{1-2\tau}{\gamma}(-K-Q) & -\gamma^{-2}Q - \tau(1-2\tau) I_{d}\\  -\gamma^{-2}Q - \tau(1-2\tau)I_{d}& -\frac{1 + 2 \tau}{\gamma} I_d
    \end{pmatrix}+ \begin{pmatrix}
        A_{h^{2}} & B_{h^{2}}\\ B_{h^{2}}& C_{h^{2}}
    \end{pmatrix}\\
    &\prec h \begin{pmatrix}
        \frac{1-2\tau}{\gamma}(-K-Q)+ 2\gamma^{-3} Q^2 & -\tau (1-2\tau)I_d \\ -\tau (1-2\tau)I_d& (-2\tau \gamma^{-1} -\frac{\gamma^{-1}}{2}) I_d
    \end{pmatrix} + \begin{pmatrix}
        A_{h^{2}} & B_{h^{2}}\\ B_{h^{2}}& C_{h^{2}}
    \end{pmatrix},
\end{align*}
since for all $(z,w)\in \mathbb{R}^{2d}$, $z^T(-Q\gamma^{-2})w\le \gamma^{-3}z^TQ ^2 z + 1/4 \gamma^{-1}  |w|^2 $ and  $w^T(-Q\gamma^{-2})z\le \gamma^{-3}z^TQ ^2 z + 1/4 \gamma^{-1} |w|^2$ and where
\begin{align*}
    A_{h^{2}} &=h(1-\eta)\frac{1-2\tau}{\gamma }(K+Q) -h\frac{1-\eta}{\gamma}\left(\gamma^{-2}K + \frac{(1-2\tau)^{2}}{2} I_{d}\right)(K+Q)\\
    &-h\frac{1-\eta}{\gamma}(K+Q)\left(\gamma^{-2}K + \frac{(1-2\tau)^{2}}{2} I_{d}\right)\\
    B_{h^{2}} &= \frac{(1-\eta - \eta \gamma h)}{\gamma^{3}}K + h(1-\eta)Q/\gamma^{2} + \tau(1-2\tau)\frac{\gamma h - 1 +\eta}{\gamma }I_{d} -h\frac{(1-\eta)(1-2\tau)}{2\gamma^{2}}(K+Q)\\
    C_{h^{2}} &= \frac{(h\gamma - 1 +\eta)(1+2\tau)}{\gamma^{2}}I_{d}
\end{align*}
and using that $|h\gamma -1 + \eta| \leq \frac{h^{2}\gamma^{2}}{2}$, $|h\gamma\eta -1 + \eta| \leq |h\gamma -1 + \eta| + h\gamma(1-\eta) \leq  \frac{3h^{2}\gamma^{2}}{2}$ for $h< \frac{1}{2\gamma}$, $w^TQz\le \frac{1}{2}\gamma^{-1}z^TQ ^2 z + \frac{1}{2} \gamma |w|^2$ and $w^TKz\le \frac{1}{2}\gamma^{-1}z^TK^2 z + \frac{1}{2} \gamma |w|^2$ we have
\begin{align*}
    &\begin{pmatrix}
        A_{h^{2}} & B_{h^{2}}\\ B_{h^{2}}& C_{h^{2}}
    \end{pmatrix} \prec \begin{pmatrix}
       1 & 0\\ 0& 0
    \end{pmatrix}  \otimes \Bigg( A_{h^{2}}+ \frac{3h^{2}Q^{2}}{2\gamma^{2}} + 2\frac{h^{2}K^{2}}{\gamma^{2}} + \frac{h^{2}\tau(1-2\tau)\gamma^{2}}{2} I_{d} \Bigg)
    \\
    &+\begin{pmatrix}
       0 & 0\\ 0& 1
    \end{pmatrix} \otimes \left(\frac{h^{2}(1+2\tau)}{2}I_{d} +  {h^{2}}\left(\frac{7}{2} + \tau(1-2\tau)/2\right) I_{d}\right)
\end{align*}

We have that $z^{T}(K+Q)w \leq \frac{1}{2\gamma}z^{T}(K+Q)^{2}z + \frac{\gamma}{2}|w|^{2}$, $(K+Q)^{T}K(K+Q) \prec L_{K}(K+Q)^{2}$ as $K$ is symmetric and positive definite and hence we can take the square root of $K$. Using these identities and also using the aforementioned inequalities for $h$ and $\gamma$ and $h<\frac{1}{12\gamma}$ we have
\begin{align*}
    &h^2 P_1^T M P_1 =\\
    &\begin{pmatrix}
        \frac{h^{2}(K+Q)\left(2(1-\eta)^{2}K + \gamma^{2}\left((1-2\tau)^{2} + 4\eta\tau(1-2\tau) + \eta^{2}(1+4\tau^{2})\right)\right)(K+Q)}{2\gamma^{4}}& \frac{-h(1-\eta)(K+Q)((1-\eta)K + \gamma^{2}(\tau(2\tau-1) - \eta/2-2\eta\tau^{2})I_{d})}{\gamma^{4}}\\
     \frac{-h(1-\eta)((1-\eta)K + \gamma^{2}(\tau(2\tau-1) - \eta/2-2\eta\tau^{2})I_{d})(K+Q)}{\gamma^{4}} & \frac{(1-\eta)^{2}(2K + \gamma^{2}(1+4\tau^{2})I_{d})}{2\gamma^{4}}
    \end{pmatrix}\\
    &\prec 
     h^{2}\begin{pmatrix}
       \frac{5}{2}(K+Q)^{2}\gamma^{-2} & 0 \\  0 & K\gamma^{-2} + \frac{5}{2}I_{d}
    \end{pmatrix}
\end{align*}
assuming that $\tau \leq 1/16$.
Using the fact that $(K+Q)^{2} \prec 2K^{2} + 2Q^{2} \prec 2L_{K}K + 2Q^{2}$ and $KQ,QK \prec 
L_{K}K+Q^{2}$. Combining the previous estimates we obtain
\begin{align*}
    &h(MP_{1} + P^{T}_{1}M) + h^{2}P^{T}_{1}MP_{1} \prec h \begin{pmatrix}
        \frac{1-2\tau}{\gamma}(-K-Q)+ 2\gamma^{-3} Q^2 & -\tau (1-2\tau)I_d \\ -\tau (1-2\tau)I_d& (-2\tau \gamma^{-1} -\frac{\gamma^{-1}}{2}) I_d
    \end{pmatrix}\\
    &+ h^{2}\begin{pmatrix}
       (1-2\tau)\left(2\tau\frac{1-\eta}{\gamma h}(K+Q) +\frac{\tau \gamma^{2}}{2}I_{d}\right) + 9(Q^{2} + L_{K}K)\gamma^{-2} & 0 \\ 0& K\gamma^{-2} + 7I_{d}
    \end{pmatrix}
\end{align*}
We define $C_{\eta,\tau}:= 1-2\tau(1-\eta)$, where $\frac{7}{8}\leq C_{\eta,\tau}\leq 1$.
By the conditions on $h$ it holds that $h^{2}\frac{9L_{K}}{\gamma^{2}}K \prec h\frac{C_{\eta,\tau}(1-4\tau)}{4\gamma}K$, $h(L_{k}\gamma^{-2}+7)\leq \frac{1}{2\gamma} $ and $9hL^{2}_{G}\gamma^{-2} \leq 2\gamma^{-3}L^{2}_{G}$
\begin{align*}
    &\prec h \begin{pmatrix}
        \frac{C_{\eta,\tau}(1-2\tau)}{\gamma}(-K-Q)+\frac{C_{\eta,\tau}(1-4\tau)}{4\gamma}K+4\gamma^{-3} L^2_{G}I_{d} +h\tau \gamma^{2}(1-2\tau)/2 I_{d} & -\tau (1-2\tau)I_d \\ -\tau (1-2\tau)I_d& -2\tau \gamma^{-1} I_d
    \end{pmatrix}.
\end{align*}
By assumption on $G$ and $\gamma$ and the choice of $\tau$, we observe  
\begin{align*}
    \frac{C_{\eta,\tau}(1-2\tau)}{\gamma}z^T (-Q) z \le \frac{C_{\eta,\tau}(1-2\tau)}{\gamma} L_G \1_{|z|\le R}|z|^2 \le C_{\eta,\tau}L_G \gamma^{-1} R^2
\end{align*}
and due to the condition on $\gamma$ and $\tau < \kappa/6\gamma^{2}$
\begin{align*}
    &-\frac{3C_{\eta,\tau}(1-4\tau)}{4\gamma}z^T K z +4\gamma^{-3} L_G^2|z|^2  + \frac{h\tau}{2} \gamma^{2}(1-2\tau)|z|^{2}\\
    &\le -\frac{3C_{\eta,\tau}}{8\gamma}z^T K z + \frac{h\tau}{2} \gamma^{2}(1-2\tau)|z|^{2} \le -2 \tau \gamma \frac{(1-2\tau)^2}{2}|z|^{2}.
\end{align*}
Hence, 
\begin{align*}
    &(z,w)^T\Big(h (M P_1+P_1^T M)+ h^2 P_1^T M P_1\Big) (z,w)
    \\ & \le C_{\eta,\tau} h(z,w)^T  \begin{pmatrix}
        -2\tau \gamma \Big( \gamma^{-2}K +\frac{(1-2\tau)^2}{2}\Big)  & -\tau (1-2\tau)I_d \\ -\tau (1-2\tau)I_d& -2\tau \gamma^{-1} I_d
    \end{pmatrix} (z,w) + C_{\eta,\tau}hL_G \gamma^{-1} R^2
    \\ & \le -2C_{\eta,\tau}\tau \gamma h  (z,w)^TM(z,w)+ C_{\eta,\tau}hL_G \gamma^{-1} R^2 \le -\frac{7}{8}\tau \gamma h  (z,w)^TM(z,w)
\end{align*}
where the last step holds by the definition of $\mathcal{R}$. We note that for $R=0$, $Q$ is positive definite and we can bound directly $-\frac{C_{\eta,\tau}(1-2\tau)}{\gamma}Q+4\gamma^{-3} Q^2\prec 0$, which yields the result.
\end{proof}

\begin{proof}[Proof of \Cref{thm:BU_contr}]
    Fix $k\in \mathbb{N}$. Consider $\mathbf{X}_k,\mathbf{V}_k,\mathbf{X}_k', \mathbf{V}_k'\in \mathbb{R}^d$. As for the Euler scheme, we write $Z_k= \mathbf{X}_k-\mathbf{X}_k'$, $W_k=\mathbf{V}_k-\mathbf{V}_k'$ and $q_k=Z_k+\gamma^{-1} W_k$. 
    We show contraction separately for the synchronous coupling and the coupling given by \eqref{eq:coupl_contr_BU}.
    
    \textit{Case 1:} If $D_{\mathcal{K}}+\epsilon r_l((\mathbf{X}_k, \mathbf{V}_k),(\mathbf{X}_k', \mathbf{V}_k'))\le r_s((\mathbf{X}_k, \mathbf{V}_k),(\mathbf{X}_k', \mathbf{V}_k')$, then the synchronous coupling is applied and it holds $\rho((\mathbf{X}_k,\mathbf{V}_k),(\mathbf{X}_k',\mathbf{V}_k'))=f(D_{\mathcal{K}}+\epsilon r_l((\mathbf{X}_k,\mathbf{V}_k),(\mathbf{X}_k',\mathbf{V}_k')))$.

    We observe
    \begin{align*}
        \rho_{k+1}:=\rho((\mathbf{X}_{k+1},\mathbf{V}_{k+1}),(\mathbf{X}_{k+1}',\mathbf{V}_{k+1}'))\le f(D_{\mathcal{K}}+\epsilon r_l(k+1)), 
    \end{align*}
    where $r_l(k+1)=r_l((\mathbf{X}_{k+1},\mathbf{V}_{k+1}),(\mathbf{X}_{k+1}',\mathbf{V}_{k+1}'))$
    By Proposition~\ref{prop:case1_BU} and \eqref{eq:condition_h_BU} it holds
    \begin{align*}
        r_l(k+1) & \le \sqrt{1-\tau \gamma h}r_l(k) \leq \left(1-\frac{7\tau \gamma h}{16}\right)r_l(k).
    \end{align*}
    By concavity of $f$, we obtain
    \begin{equation} \label{eq:contr_largedist_BU}
    \begin{aligned}
        \mathbb{E}[\rho_{k+1}-\rho_k]&\le \mathbb{E}[f(D_{\mathcal{K}}+\epsilon r_l(k+1))-f(D_{\mathcal{K}}+\epsilon r_l(k))]
        \\ & \le f'(R_1)\epsilon\mathbb{E}[r_l(k+1)-r_l(k)]
        \le -f'(R_1) c_1 h r_l(k),
    \end{aligned}
    \end{equation}
    where $c_{1} = 7\epsilon \tau \gamma/16$ with $\tau$ given in \Cref{prop:case1_BU}. By \eqref{eq:equi_dist} as well as $f(r)\le r$,
    \begin{align*}
        -f'(R_1) c_1 h r_l(k)\le -f'(R_1)h \frac{c_1 r_l(k)}{D_{\mathcal{K}}+\epsilon r_l(k)}\rho_k\le -f'(R_1) c_1 h \mathcal{E}\rho_k.
    \end{align*}
    Hence, 
    \begin{align*}
        \mathbb{E}[\rho_{k+1}]\le (1-f'(R_1) c_1 \mathcal{E} h) \rho_k.
    \end{align*}
    
    \textit{Case 2:} If $D_{\mathcal{K}}+\epsilon r_l((\mathbf{X}_k, \mathbf{V}_k),(\mathbf{X}_k', \mathbf{V}_k'))> r_s((\mathbf{X}_k, \mathbf{V}_k),(\mathbf{X}_k', \mathbf{V}_k'))$, the coupling \eqref{eq:coupl_contr_BU} is applied. 
    By the definition of the BU-scheme and \eqref{eq:coupl_contr_BU}, it holds for the process $(Z_k,q_k)_{k\in \mathbb{N}}$
    \begin{align}
        \begin{cases}
            Z_{k+1} &= Z_k+ (1-\eta ) (q_k-Z_k)-\frac{h(1-\eta)}{\gamma} (\nabla U(\mathbf{X}_k)-\nabla U(\mathbf{X}_k'))  
            \\ & + \sqrt{2 \gamma ^{-1} h}\Big( 1- \frac{1- \eta }{\gamma h }\Big)  \Xi_{k+1}
            \\ q_{k+1}&= q_k - h\gamma^{-1} (\nabla U(\mathbf{X}_k)-\nabla U(\mathbf{X}_k'))+ \sqrt{2\gamma^{-1} h}\Xi_{k+1}
        \end{cases}
    \end{align}
    with $\eta=e^{-\gamma h}$.
    To show contraction for this scenario, we split the proof in three steps:

    \textit{Step 1:} Let $|Z_k|\ge 4 |q_k|$. 
    For $|Z_{k+1}|$ and $|q_k|$ it holds
    \begin{align*}
        |Z_{k+1}|&\le |Z_k|\eta + \Big|\eta- \frac{1-\eta}{\gamma h}\Big||q_k|+\frac{h(1-\eta)}{\gamma}L|Z_k| 
    + \sqrt{2\gamma^{-1} h}\Big( 1- \frac{1-\eta}{\gamma h }\Big) |\hat{q}_k+ \Xi_{k+1}|, 
        \\  |q_{k+1}|&\le h\gamma^{-1} L|Z_{k}| + \sqrt{2\gamma^{-1} h}|\hat{q}_k+ \Xi_{k+1}|.
    \end{align*}
    We observe
    \begin{align*}
        \mathbb{E}[\rho_{k+1}-\rho_k]&\le f'(r_s(k)) \mathbb{E}[r_s(k+1)-r_s(k)]
        \\ & \le f'(r_s(k))\mathbb{E}\Big[\alpha |Z_k|\Big( \eta + \frac{h\gamma}{2}+ \frac{h\gamma \alpha (1-\eta)}{2}\Big)  + \alpha \Big|\eta-\frac{1- \eta}{\gamma h}\Big| |q_k|
        \\ & \qquad +\sqrt{2 \gamma ^{-1} h}\Big( 1+ |1- \frac{1- \eta }{\gamma h }|\alpha \Big)|\hat{q}_k+  \Xi_{k+1}|-|q_k|-\alpha |Z_k| \Big]
    \end{align*}
    By \eqref{eq:exp_1moment}, it holds $\mathbb{E}[|\hat{q}_k+\Xi_{k+1}|]=\hat{q}_k$. Hence, 
    \begin{align*}
        \mathbb{E}[\rho_{k+1}-\rho_k]&\le f'(r_s(k))\Big[\alpha |Z_k|\Big[\Big( \eta + \frac{h\gamma}{2}+ \frac{h\gamma \alpha (1-\eta)}{2}\Big) -1\Big] 
        \\ & +  \alpha \Big(\Big|\eta-\frac{1- \eta}{\gamma h}\Big|+\Big|1- \frac{1- \eta }{\gamma h }\Big| \Big)|q_k|\Big]
        \\ & \le f'(r_s(k))\Big[\alpha |Z_k|\Big[\Big( \eta + \frac{h\gamma}{2}+ \frac{h\gamma \alpha (1-\eta)}{2}\Big) -1\Big] + \alpha(1-\eta ) |Z_k|/4\Big],
    \end{align*}
    where we used in the second step $\eta=e^{-\gamma h} \le \frac{1-\eta}{\gamma h}\le 1$ and $|q_k|\le |Z_k|/4$.
    Using that by \eqref{eq:condition_h_BU}, $ \frac{h\gamma}{2}+ \frac{h\gamma \alpha}{2}\le \frac{1}{12}+\frac{1}{24}=\frac{1}{8}$, we obtain
    \begin{align*}
        \mathbb{E}[\rho_{k+1}-\rho_k]&\le f'(r_s(k))\alpha |Z_k|\Big[\frac{h\gamma}{2}\eta+\Big(\frac{3}{4}-\frac{h\gamma}{2}- \frac{h\gamma \alpha}{2}\Big)(\eta -1)\Big]
        \\ & \le f'(r_s(k))\alpha |Z_k|\Big(\frac{h\gamma}{2}\eta+\frac{5}{8}\eta (-\gamma h)\Big)\le f'(r_s(k))\alpha |Z_k|\Big(-\frac{h\gamma}{8}\Big)\eta
        \\ & \le -f'(R_1) \eta \min\Big(\frac{h\gamma}{16},\frac{h\gamma \alpha}{4}\Big) \rho_k.
    \end{align*}
    
    \textit{Step 2:} Let $|Z_k|<4 |q_k|$ and $|q_k|\ge \sqrt{2\gamma^{-1} h}$. 
    First, using \eqref{eq:condition_h_BU} we observe that similarly to the first case
    \begin{align*}
        \alpha |Z_k|[(\eta-1)+ \frac{h\gamma }{2} + \frac{\alpha \gamma h }{2}(1-\eta)-1 ]\le 0 
    \end{align*}
    and hence, we can bound
    \begin{align}
        r_s(k+1)&\le \alpha |Z_k|+ \alpha \Big|\frac{1-\eta}{\gamma h}-\eta\Big| |q_k| + \sqrt{2\gamma^{-1} h } \Big(1+ \Big|1-\frac{1-\eta}{\gamma h}\Big|\alpha\Big) |\hat{q}_k+\Xi_{k+1}|  =:\bar{r}_s(k). \label{eq:bar_r_s}
    \end{align}
    Then, by concavity of $f$
    \begin{align}
        \mathbb{E}[\rho_{k+1}-\rho_k]&\le \mathbb{E}[f(\bar{r}_s(k))-f(r_s(k))] \nonumber
        \\ & \le f'(r_s(k))\mathbb{E}[\bar{r}_s(k)-r_s(k)]+ \mathbb{E}\Big[\int_{r_s(k)}^{\bar{r}_s(k)} (\bar{r}_s(k)-t) f''(t) \rmd t\Big] \label{eq:expec_case2}
    \end{align}
    For the first term, it holds by \eqref{eq:exp_1moment} and $\eta \le \frac{1-\eta}{\gamma h}\le 1$,
    \begin{align*}
        f'(r_s(k))\mathbb{E}[\bar{r}_s(k)-r_s(k)]\le f'(r_s(k))\alpha (1-\eta)|q_k|.
    \end{align*}
    To bound the second term we define
    \begin{align}
        \bar{A}= \{\Xi_{k+1} =-\hat{q}_k\} \quad \text{and} \quad \bar{A}^c = \{\Xi_{k+1} \neq -\hat{q}_k\}=\{\Xi_{k+1} = 2(e_k\cdot \xi_{k+1}^{(1)})e_k\}
    \end{align}
    and split the term in 
    \begin{align*}
        \mathbb{E}\Big[\int_{r_s(k)}^{\bar{r}_s(k)} & (\bar{r}_s(k)-t) f''(t) \rmd t\Big]
        \\ & = \mathbb{E}\Big[\int_{r_s(k)}^{\bar{r}_s(k)} (\bar{r}_s(k)-t) f''(t) \rmd t \1_{\bar{A}}\Big]+\mathbb{E}\Big[\int_{r_s(k)}^{\bar{r}_s(k)} (\bar{r}_s(k)-t) f''(t) \rmd t\1_{\bar{A}^c}\Big].
    \end{align*}
    Since $f''$ is non-positive, the first term satisfies
    \begin{align}
        \mathbb{E}\Big[&\int_{r_s(k)}^{\bar{r}_s(k)} (\bar{r}_s(k)-t) f''(t) \rmd t \1_{\bar{A}}\Big]\le \mathbb{E}\Big[\int_{r_s(k)}^{\frac{1}{2}(\bar{r}_s(k)+r_s(k))} (\bar{r}_s(k)-t) f''(t) \rmd t \1_{\bar{A}}\Big] \nonumber
        \\ &  \le \mathbb{E}\Big[\frac{3(\bar{r}_s(k)-r_s(k))^2}{8}\max_{t\in [\frac{1}{2}(\bar{r}_s(k)+r_s(k)), r_s(k)]} f''(t) \1_{\bar{A}}\Big] \nonumber
        \\&  \le \frac{3(\alpha|\eta- \frac{1-\eta}{\gamma h}|-1)^2|q_k|^2}{8}\max_{t\in [r_s(k)+(\alpha|\eta- \frac{1-\eta}{\gamma h}|-1)/2|q_k|, r_s(k)]} f''(t) \mathbb{E}[\1_{\bar{A}}]. \label{eq:expec_case2_1}
    \end{align}
    For the second term we define the set
    \begin{align*}
        A'=\{\sqrt{2\gamma h^{-1} }|\hat{q}_k+ \Xi_{k+1} |\le |q_k|-\sqrt{2\gamma^{-1} h}\}.
    \end{align*}
    Then,
    \begin{align}
        \mathbb{E}\Big[\int_{r_s(k)}^{\bar{r}_s(k)} (\bar{r}_s(k)-t) f''(t) \rmd t\1_{\bar{A}^c}\Big]\le \mathbb{E}\Big[\frac{3(\bar{r}_s(k)-r_s(k))^2}{8}\max_{t\in [\frac{1}{2}(\bar{r}_s(k)+r_s(k)),r_s(k)]}f''(t)  \1_{\bar{A}^c}\1_{A'}\Big].\label{eq:expec_case2_2}
    \end{align}
    Next, we use the observation \eqref{eq:f_conseq2} for $f''(t)$ for $t\in [\frac{1}{2}(r_s(k)+\bar{r}_s(k)),r_s(k)]$.
    By inserting the estimate in \eqref{eq:expec_case2_1} and \eqref{eq:expec_case2_2} and using $|q_k|\ge \sqrt{2\gamma^{-1} h }$, $|\eta- \frac{1-\eta}{\gamma h}|\le 1-\eta\le \gamma h$ and $\alpha \gamma h \le 1/2$, we obtain
    \begin{align}
        \mathbb{E}\Big[\int_{r_s(k)}^{\bar{r}_s(k)} (\bar{r}_s(k)-t) f''(t) \rmd t \1_{\bar{A}}\Big] 
         & \le \frac{3\gamma^{-1} h}{16}\Big(-128 \alpha \gamma^2 \frac{r_s(k)}{4}f'(r_s(k)) -  \frac{\hat{c}\gamma}{4}f(r_s(k))\Big) \mathbb{E}[\1_{\bar{A}}] \label{eq:expec_case2_3}
    \end{align}
    and 
    \begin{align*}
         \mathbb{E}\Big[&\int_{r_s(k)}^{\bar{r}_s(k)}(\bar{r}_s(k)-t) f''(t) \rmd t\1_{\bar{A}^c}\Big]
         \\ & \le \Big(-128 \alpha \gamma^2 \frac{r_s(k)}{4}f'(r_s(k)) -  \frac{\hat{c}\gamma}{4}f(r_s(k))\Big)\mathbb{E}\Big[\frac{3(\bar{r}_s(k)-r_s(k))^2}{8}\1_{\bar{A}^c}\1_{A'}\Big].
    \end{align*}
    The expectation on the right-hand side is bounded by 
    \begin{align*}
        \mathbb{E}&\Big[\frac{3(\bar{r}_s(k)-r_s(k))^2}{8}\1_{\bar{A}^c}\1_{A'}\Big]
        \\ & = \int_{-\infty}^{\infty} \frac{3(\alpha|\eta - \frac{1-\eta}{\gamma h }||q_k|+ \sqrt{2\gamma^{-1} h}(1+|1-\frac{1-\eta}{\gamma h}|\alpha)||\hat{q}_k|+2u|-|q_k|)^2}{8}\1_{\{||\hat{q}_k|+2u|\le |\hat{q}_k|-1\}}
        \\ & \qquad \qquad \cdot(\varphi(u)-\varphi(u+|\hat{q}_k|))^+ \rmd u
        \\ & = \int_{-\frac{|\hat{q}_k|}{2}}^{-\frac{1}{2}} \frac{3(\alpha|\eta - \frac{1-\eta}{\gamma h }||q_k|+ \sqrt{2\gamma^{-1} h}(1+|1-\frac{1-\eta}{\gamma h}|\alpha)2u)^2}{8}(\varphi(u)-\varphi(u+|\hat{q}_k|))^+ \rmd u
        \\ & \ge \frac{3\gamma^{-1}h}{4}\int_{-\frac{|\hat{q}_k|}{2}}^{-\frac{1}{2}}  \Big(\alpha|\eta - \frac{1-\eta}{\gamma h }|\frac{R_1}{\sqrt{2\gamma^{-1} h}}+ 2u\Big)^2(\varphi(u)-\varphi(u+|\hat{q}_k|))^+ \rmd u
        \\ & \ge \frac{3\gamma^{-1}h}{4}\int_{-\frac{|\hat{q}_k|}{2}}^{-\frac{1}{2}}  \Big(\alpha(1-\eta)\frac{R_1}{\sqrt{2\gamma^{-1} h}}+ 2u\Big)^2(\varphi(u)-\varphi(u+|\hat{q}_k|))^+ \rmd u
        \\ & \ge \frac{3\gamma^{-1}h}{4}\int_{-\frac{|\hat{q}_k|}{2}}^{-\frac{1}{2}}  \Big(1/2+ 2u\Big)^2(\varphi(u)-\varphi(u+|\hat{q}_k|))^+ \rmd u
        \\ & \ge \frac{3\gamma^{-1}h}{4}\int_{-\frac{|\hat{q}_k|}{2}}^{-\frac{1}{2}}  \frac{1}{4}(\varphi(u)-\varphi(u+|\hat{q}_k|))^+ \rmd u,
    \end{align*}
    where in the second last step we used $\frac{\alpha(1-\eta) R_1}{\sqrt{2\gamma^{-1} h}}\le \frac{\alpha\gamma h R_1}{\sqrt{2\gamma^{-1} h}} \le 1/2$. 
    Combining this bound and the expectation in \eqref{eq:expec_case2_3} yields
    \begin{align*}
        \mathbb{E}\Big[\frac{3(\bar{r}_s(k)-r_s(k))^2}{8}\1_{\bar{A}^c}\1_{A'}\Big]+ \frac{3\gamma^{-1} h}{16}\mathbb{E}[\1_{\bar{A}}]\ge \frac{3}{16} \gamma^{-1} h \int_{-\infty}^{-1/2} \varphi(u) \rmd u \ge  \frac{9}{160} \gamma^{-1} h.
    \end{align*}
    Inserting this bound back into the sum of \eqref{eq:expec_case2_1} and \eqref{eq:expec_case2_2} and inserting the sum back into \eqref{eq:expec_case2}, we obtain
    \begin{align*}
        \mathbb{E}[\rho_{k+1}-\rho_k]&\le f'(r_s(k)) \alpha (1-\eta)|q_k|+ \frac{9}{160}\gamma^{-1} h\Big(-128 \alpha \gamma^2 \frac{r_s(k)}{4}f'(r_s(k)) -  \frac{\hat{c}\gamma}{4}f(r_s(k))\Big)
        \\ & \le -\frac{9}{640}\hat{c}h f(r_s(k))= -\frac{9}{640}\hat{c}h \rho_k.
    \end{align*}

    \textit{Step 3:} Let $|Z_k|< 4 |q_k|$ and $|q_k|< \sqrt{2\gamma^{-1} h}$.  As in the second step it holds
    \begin{align}
        \mathbb{E}[\rho_{k+1}-\rho_k] 
        \le f'(r_s(k)) \alpha \gamma h |q_k|+ \mathbb{E}\Big[\int_{r_s(k)}^{\bar{r}_s(k)} (\bar{r}_s(k)-t)f''(t) \rmd t\Big] ,\label{eq:expec_case3}
    \end{align}
    where $\bar{r}_s(k)$ is given by \eqref{eq:bar_r_s}. 
    The last term is bounded by 
    \begin{align*}
        \mathbb{E}\Big[\int_{r_s(k)}^{\bar{r}_s(k)} (\bar{r}_s(k)-t)f''(t) \rmd t\Big] &\le \mathbb{E}\Big[\int_{\frac{1}{2}(\bar{r}_s(k)+r_s(k))}^{\bar{r}_s(k)} (\bar{r}_s(k)-t)f''(t) \rmd t \1_{A}\Big]
        \\ & \le \mathbb{E}\Big[ \frac{(\bar{r}_s(k)-r_s(k))^2}{8} \max_{t\in [r_s(k), \frac{1}{2}(\bar{r}_s(k)+r_s(k))]}f''(t)  \1_{A}\Big],
    \end{align*}
    where the set $A$ is given by 
    \begin{align} \label{eq:A_BU}
        A=\{|q_k|+2\sqrt{2\gamma^{-1} h} \le \sqrt{2\gamma^{-1}  h}|\hat{q}_k+\Xi_{k+1}|\le |q_k|+ 6 \sqrt{2\gamma^{-1} h }\}.
    \end{align}
    By construction of $A$, it holds
    \begin{align*}
        \frac{1}{2}(r_s(k)+\bar{r}_s(k))\ge \frac{1}{2}|q_k|+ \frac{1}{2}\sqrt{2\gamma^{-1} h} \Big(1+\Big|1-\frac{1-\eta}{\gamma h}\Big| \alpha\Big) |\hat{q}_k + \Xi_{k+1}|\ge \sqrt{2\gamma^{-1} h}.
    \end{align*}
    By \eqref{eq:condition_h_BU}, it holds $8(7\alpha\gamma h +6)\le 50$ and $(7\alpha \gamma h +6)(7\alpha \gamma h+8)\le 50$ and therefore by \eqref{eq:A_BU}
    \begin{align*}
        &\bar{r}_s(k)^2 -r_s(k)^2
        \\ & = 2(\alpha |Z_k|+|q_k|)\Big[\alpha \Big|\eta-\frac{1-\eta}{\gamma h}\Big||q_k| + \Big(1+\alpha \Big|1- \frac{1-\eta}{\gamma h}\Big|\Big)\sqrt{2\gamma^{-1} h }|\hat{q}_k+\Xi_{k+1}|-|q_k|\Big]
        \\ & \quad + \Big(\alpha \Big|\eta-\frac{1-\eta}{\gamma h}\Big||q_k| + \Big(1+\alpha \Big|1- \frac{1-\eta}{\gamma h}\Big|\Big)\sqrt{2\gamma^{-1} h }|\hat{q}_k+\Xi_{k+1}|-|q_k|\Big)^2
        \\ & \le 2(4\alpha+1)|q_k|\Big(\alpha (1-\eta)|q_k|+ 6 \alpha (1-\eta)\sqrt{2\gamma^{-1} h}+ 6 \sqrt{2\gamma^{-1} h}\Big)
        \\ & \quad +\Big(\alpha (1-\eta)|q_k|+ 6 \alpha (1-\eta)\sqrt{2\gamma^{-1} h}+ 6 \sqrt{2\gamma^{-1} h}\Big)^2
        \\ & \le (2\gamma^{-1} h)\Big( 2(4\alpha+1)(7\alpha\gamma h +6)+ (7\alpha\gamma h +6)^2\Big)
        \\ & \le  (2\gamma^{-1} h)\Big( 8\alpha(7\alpha\gamma h +6)+ (7\alpha\gamma h +6)(7\alpha\gamma h +8)\Big)\le (2\gamma^{-1} h)(50 \alpha + 50).
    \end{align*}
    By \eqref{eq:condition_h_BU}, we observe
    \begin{align*}
         \frac{\phi(\bar{r}_s(k))}{\phi(r_s(k))}&=\exp\Big(-128\alpha \gamma^2 \frac{\bar{r}_s(k)^2 -r_s(k)^2}{2}\Big) \ge \exp\Big(-128\alpha \gamma^2 \frac{(2\gamma^{-1} h)(50 \alpha + 50)}{2}\Big) 
         \\ & \ge \exp(-2/3)\ge 1/2
    \end{align*}
    and also \eqref{eq:f_conseq2} holds. Using these bounds and \eqref{eq:f_conseq3}, yields for all $t\in [\frac{1}{2}(\bar{r}_s(k)+r_s(k)),\bar{r}_s(k)]$
    \begin{align*}
        f''(t)\le -\frac{1}{4}f'(r_s(k)) 128 \alpha \gamma^2 \sqrt{2\gamma^{-1} h} -\frac{\hat{c}\gamma}{4}\frac{\sqrt{2 \gamma^{-1} h}}{(4\alpha +1)|q_k|}f(r_s(k)),
    \end{align*}
    where we used that $r_s(k)\le (4\alpha+1)|q_k|$. Hence, since on the set $A$, $(\bar{r}_s(k)-r_s(k))^2\ge 8\gamma^{-1} h$, it holds
    \begin{align*}
         \mathbb{E}&\Big[\int_{r_s(k)}^{\bar{r}_s(k)} (\bar{r}_s(k)-t)f''(t) \rmd t\Big]
\le \frac{4(2\gamma^{-1}h)^{3/2}}{8} \Big( -f'(r_s(k)) 32 \alpha \gamma^2  -\frac{\hat{c}\gamma}{4}\frac{1}{(4\alpha +1)}f(r_s(k))\Big)\mathbb{E}[\1_{A}].
    \end{align*}
    The term $\mathbb{E}[\1_{A}]$ is bounded as in \eqref{eq:expecA}. Hence, 
    \begin{align*}
        \mathbb{E}&\Big[\int_{r_s(k)}^{\bar{r}_s(k)} (\bar{r}_s(k)-t)f''(t) \rmd t\Big] \le -4\gamma^{-1}  h \alpha \gamma^2 f'(r_s(k)) |q_k|- \frac{1}{32(4\alpha +1)} h \hat{c} f(r_s(k)).
    \end{align*}
    Inserting this bound back into \eqref{eq:expec_case3}, we obtain 
    \begin{align*}
        \mathbb{E}[\rho_{k+1}- \rho_k]\le - \frac{1}{32(4\alpha +1)} h \hat{c} f(r_s(k)).
    \end{align*}
    Putting the three steps together and combining the result with the first case, we obtain
    \begin{align*}
        \mathbb{E}[\rho_{k+1}- \rho_k]\le (1- ch) \rho_k
    \end{align*}
    with $c$ given in \eqref{eq:c_BU}.
\end{proof}

\begin{proof}[Proof of \Cref{thm:UBU_contr}]
    Since the scheme satisfies $(\mathcal{U}_{1/2}\mathcal{B}\mathcal{U}_{1/2})^{n} = \mathcal{U}_{1/2}(\mathcal{B}\mathcal{U})^{n-1}\mathcal{B}\mathcal{U}_{1/2}$, where $\mathcal{U}_{1/2}$ is a half step, we use the contraction result of \Cref{thm:BU_contr} for the steps $(\mathcal{B}\mathcal{U})^{n-1}$ and it remains to control the error induced by the steps $\mathcal{U}_{1/2}$ and $\mathcal{B}\mathcal{U}_{1/2}$. 
    Using a synchronous coupling, we show that there exists a constant $\mathbf{C}$ such that for any $x,v,x',v'\in \mathbb{R}^d$ and $\xi^{(1)}, \xi^{(2)}\sim \mathcal{N}(0,I_d)$ 
    \begin{align*}
        &\rho((\mathbf{X}_U, \mathbf{V}_U), (\mathbf{X}_U', \mathbf{V}_U'))\le \mathbf{C}\rho((x,v),(x',v')), \qquad \text{and}
        \\ & \rho((\mathbf{X}_{BU}, \mathbf{V}_{BU}), (\mathbf{X}_{BU}', \mathbf{V}_{BU}'))\le \mathbf{C}\rho((x,v),(x',v')), 
    \end{align*}
    where 
    \begin{align*}
        & (\mathbf{X}_U, \mathbf{V}_U)=\mathcal{U}\Big(x,v,\frac{h}{2},\xi^{(1)}, \xi^{(2)}\Big), && (\mathbf{X}_{BU}, \mathbf{V}_{BU})=\mathcal{B}\Big(\mathcal{U}\Big(x,v,\frac{h}{2},\xi^{(1)}, \xi^{(2)}\Big),h\Big), 
        \\  & (\mathbf{X}_U', \mathbf{V}_U')=\mathcal{U}\Big(x',v',\frac{h}{2},\xi^{(1)}, \xi^{(2)}\Big), && (\mathbf{X}_{BU}', \mathbf{V}_{BU}')=\mathcal{B}\Big(\mathcal{U}\Big(x',v',\frac{h}{2},\xi^{(1)}, \xi^{(2)}\Big),h\Big).
    \end{align*}
    We write
    \begin{align*}        
        &Z_U=\mathbf{X}_U-\mathbf{X}_U', \quad  W_U=\mathbf{V}_U-\mathbf{V}_U', && Z_{BU}=\mathbf{X}_{BU}-\mathbf{X}_{BU}', \quad W_{BU}=\mathbf{V}_{BU}-\mathbf{V}_{BU}',
        \\ & q_U=Z_U+\gamma^{-1} W_U, && q_{BU}=Z_{BU}+\gamma^{-1} W_{BU} 
    \end{align*}
    and $z=x-x'$, $w=v-v'$ and $q=z+\gamma^{-1} w$.

    First, we assume that $x,v,x',v'\in \mathbb{R}^d$ are such that $\rho((x,v),(x',v))=f(r_s((x,v),(x',v)))$.
    By the construction of the $\mathcal{U}$ step it holds
    \begin{align*}
        \alpha |Z_{U}|+|q_U|&= \alpha|z+ (1-\exp^{-\gamma h /2})(q-z)|+|q|
        \\ &\le \alpha |z|\exp^{-\gamma h /2}+(1+\alpha\gamma h/2)|q|\le (1+\alpha \gamma h/2) (\alpha |z|+|q|).
    \end{align*}
    Since $f$ is concave and $(1+\alpha \gamma h/2) \ge 1$, it holds
    \begin{align*}
        \rho ((\mathbf{X}_U,\mathbf{V}_U),(\mathbf{X}_U',\mathbf{V}_U'))&\le f(r_s((\mathbf{X}_U,\mathbf{V}_U),(\mathbf{X}_U',\mathbf{V}_U'))
         \le f((1+\alpha \gamma h/2)r_s((x,v),(x',v')))
         \\ &\le (1+\alpha \gamma h/2)f(r_s((x,v),(x',v')))=(1+\alpha \gamma h/2)\rho((x,v),(x',v)).
    \end{align*}
    Analogously, it holds
    \begin{align*}
        \alpha |Z_{BU}|+|q_{BU}|&= \alpha|z+ (1-e^{-\gamma h /2})(q-z)|+|q-h\gamma^{-1} e^{-\gamma h/2}(\nabla U(x)-\nabla U(x'))|
        \\ &\le \alpha |z|e^{-\gamma h/2}+ h\gamma^{-1}L e^{-\gamma h/2}|z|+(1+\alpha\gamma h/2)|q|
        \\ & \le \alpha(1+h\gamma/2) e^{-\gamma h/2}|z|+ (1+\alpha\gamma h/2)|q| \le (1+\alpha \gamma h/2) (\alpha |z|+|q|).
    \end{align*}
    Hence, since $f$ is concave and $(1+\alpha \gamma h/2) \ge 1$, 
    \begin{align*}
        \rho ((\mathbf{X}_{BU},\mathbf{V}_{BU}),(\mathbf{X}_{BU}',\mathbf{V}_{BU}')) & \le f(r_s((\mathbf{X}_{BU},\mathbf{V}_{BU}),(\mathbf{X}_{BU}',\mathbf{V}_{BU}')))
         \\ &\le (1+\alpha \gamma h/2)\rho((x,v),(x',v)).
    \end{align*}
    Next, let $x,v,x',v'\in \mathbb{R}^d$ be such that $\rho((x,v),(x',v'))=f(D_{\mathcal{K}}+\epsilon r_l((x,v),(x',v')))$ holds.
    We observe that
    \begin{align*}
        r_l^2 ((\mathbf{X}_U,\mathbf{V}_U),(\mathbf{X}_U',\mathbf{V}_U'))= (z,w)^T (I_{2d} +P_2)^T M (I_{2d}+P_2) (z,w),
    \end{align*}
    with $M$ given in the proof of \Cref{prop:case1_BU} and
    \begin{align*}
        P_2 = \begin{pmatrix}
            0_d & (1-e^{\gamma h/2})\gamma^{-1} I_d \\ 0_d & (e^{-\gamma h/2}-1) I_d
        \end{pmatrix}
    \end{align*}
    where $0_d$ is the $d\times d$ zero matrix.
    We want to show that $(z,w)^T (P_2^T M+ MP_2 + P_2MP_2) \cdot (z,w)\le C r_l^2(z,w) $ for some positive constant $C>0$.
    It holds
    \begin{align*}
        P_2^T M+ MP_2 + P_2MP_2= \frac{1-e^{-\gamma h/2}}{\gamma}\begin{pmatrix}
           0_d & \gamma^{-2} K + \frac{-2\tau(1-2\tau)}{2}I_d \\  \gamma^{-2} K + \frac{-2\tau(1-2\tau)}{2}I_d & -\frac{2\tau+1}{\gamma}e^{-\gamma h/2} I_d
        \end{pmatrix}.
    \end{align*}
    For $(z,w)\in\mathbb{R}^{2d}$ it holds
    \begin{align*}
        w^T \gamma^{-2} K\cdot z+ z^T \gamma^{-2} K\cdot w \le \gamma^{-1} z^T K\cdot z+ \gamma^{-3} w^T K \cdot w. 
    \end{align*}
    Hence, 
    \begin{align*}
        &(z,w)^T (P_2^T M+ MP_2 + P_2MP_2) \cdot (z,w) 
        \\& \le \frac{1-e^{-\gamma h/2}}{\gamma}\Big( \gamma^{-1} z^T K\cdot z+ \gamma^{-3} w^T K \cdot w - 2 \tau (1-2\tau)z\cdot w -\frac{2\tau+1}{\gamma}e^{-\gamma h/2}|w|^2\Big) 
        \\& \le \frac{1-e^{-\gamma h/2}}{\gamma}\Big( -2\tau \gamma (z,w)^T M \cdot (z,w) + 2\gamma^{-1} z^T K\cdot z+ \gamma^{-3} w^T K \cdot w\Big)
        \\ & \le \frac{1-e^{-\gamma h/2}}{\gamma}2\gamma\max(1, L_K \gamma^{-2})(z,w)^T M \cdot (z,w) \le h \gamma\max(1, L_K \gamma^{-2})(z,w)^T M \cdot (z,w).
    \end{align*}
    Thus,
    \begin{align*}
        D_{\mathcal{K}}+\epsilon r_l((\mathbf{X}_U,\mathbf{V}_U),(\mathbf{X}_U',\mathbf{V}_U')) &\le D_{\mathcal{K}}+ \sqrt{1+h \gamma\max(1, L_K \gamma^{-2})} \epsilon r_l((x,v),(x',v')) 
        \\ & \le   \sqrt{1+h \gamma\max(1, L_K \gamma^{-2})}(D_{\mathcal{K}}+ \epsilon r_l((x,v),(x',v'))).
    \end{align*}
    By concavity of $f$
    \begin{align*}
        \rho ((\mathbf{X}_U,\mathbf{V}_U),(\mathbf{X}_U',\mathbf{V}_U'))& \le   \sqrt{1+h \gamma\max(1, L_K \gamma^{-2})} \rho((x,v),(x',v')).
    \end{align*}
    Similarly, we observe that
    \begin{align*}
        r_l^2 ((\mathbf{X}_{BU},\mathbf{V}_{BU}),(\mathbf{X}_{BU}',\mathbf{V}_{BU}'))&= (z,w)^T (P_{BU_{1/2}})^T M (P_{BU_{1/2}}) (z,w)
        \\ & =(z,w)^T ((I_{2d}+P_3)P_{BU})^T M ((I_{2d}+P_3)P_{BU}) (z,w),
    \end{align*}
    where $P_{BU}$ corresponds to a $\mathcal{BU}$ step and $P_{BU_{1/2}}$ to a $\mathcal{BU}_{1/2}$ and $P_3$ is given by
    \begin{align*}
        P_3 = \begin{pmatrix}
             0_d & \gamma^{-1} (1-\tilde{\eta}/\eta)I_d \\ 0_d &  (\tilde{\eta}/\eta-1)I_d
         \end{pmatrix}
    \end{align*}
    with $\eta= e^{-\gamma h}$ and $\tilde{\eta}=e^{-\gamma h/2}$.
    We observe that 
    \begin{align*}
        (I_{2d}+P_3)^T M (I_{2d}+P_3)&= M+ \begin{pmatrix}
            0_d & \gamma^{-1}(\frac{\tilde{\eta}}{\eta}-1)(-\gamma^{-2} K+ \tau(1-2\tau)I_d) \\ 0_d & \gamma^{-2}(\frac{\tilde{\eta}}{\eta}-1)\frac{1+2\tau}{2}I_d
        \end{pmatrix}
        \\ & +\begin{pmatrix}
            0_d & 0_d \\ \gamma^{-1}(\frac{\tilde{\eta}}{\eta}-1)(-\gamma^{-2} K+ \tau(1-2\tau)I_d) & \gamma^{-2}(\frac{\tilde{\eta}}{\eta}-1)\frac{1+2\tau}{2}I_d
        \end{pmatrix}
        \\ & + \begin{pmatrix}
            0_d & 0_d \\ 0_d & \gamma^{-2}(1-\frac{\tilde{\eta}}{\eta})^2(K\gamma^{-2} +2\tau^2I_d)
        \end{pmatrix}
        \\ & \prec \Big(1+ 2\tau (\frac{\tilde{\eta}}{\eta}-1)\Big)M+ \begin{pmatrix}
            (\frac{\tilde{\eta}}{\eta}-1)\gamma^{-2} K  & 0_d \\ 0_d & \gamma^{-2}(\frac{\tilde{\eta}}{\eta}-1)(\gamma^{-2} K + I_d)
        \end{pmatrix}
        \\ & + \begin{pmatrix}
            0_d & 0_d \\ 0_d & \gamma^{-2}(1-\frac{\tilde{\eta}}{\eta})^2(K\gamma^{-2} +2\tau^2I_d)
        \end{pmatrix},
    \end{align*}
    since 
    \begin{align}
        \gamma^{-1}\Big(\frac{\tilde{\eta}}{\eta}-1\Big)\gamma^{-2} z^T K w \le \gamma^{-2}\Big(\frac{\tilde{\eta}}{\eta}-1\Big)\frac{1}{2} z^T K z+\gamma^{-4}\Big(\frac{\tilde{\eta}}{\eta}-1\Big)\frac{1}{2}w^T K w.
    \end{align}
    By \eqref{eq:condition_h_BU} and \eqref{eq:tau}, it holds $(\frac{\tilde{\eta}}{\eta}-1)\le e^{\gamma h/2}\frac{\gamma h}{2}\le \frac{\gamma h}{2}e^{1/30}$ and $\tau\le 1/8$ and hence
    \begin{align*}
        (I_{2d}+P_3)^T M (I_{2d}+P_3)&\prec\Big(1+ 2\tau \frac{\gamma h e^{\frac{1}{30}}}{2}\Big)M+\begin{pmatrix}
            \frac{h\gamma^{-1}}{2}e^{\frac{1}{30}} K  & 0_d \\ 0_d & \frac{h}{2}e^{\frac{1}{30}}(1+ \frac{e^{\frac{1}{30}}}{30})(\gamma^{-3} K + \gamma^{-1} I_d)
        \end{pmatrix}
        \\ & \prec \Big(1+h\gamma \max(1, L_k\gamma^{-2}) \Big)M. 
    \end{align*}
    By \Cref{prop:case1_BU}, it holds 
    \begin{align*}
         r_l^2 ((\mathbf{X}_{BU},\mathbf{V}_{BU}),(\mathbf{X}_{BU}',\mathbf{V}_{BU}'))&\le \Big(1+h\gamma \max(1, L_k\gamma^{-2}) \Big)(z,w)^T(P_{BU})^T M P_{BU}(z,w).
         \\&\le \Big(1+h\gamma \max(1, L_k\gamma^{-2}) \Big)(z,w)^T M (z,w).
    \end{align*}
    By concavity of $f$, it holds
    \begin{align*}
        \rho ((\mathbf{X}_{BU},\mathbf{V}_{BU}),(\mathbf{X}_{BU}',\mathbf{V}_{BU}'))& \le   \sqrt{1+h \gamma\max(1, L_K \gamma^{-2})} \rho((x,v),(x',v')).
    \end{align*}

    These estimates for the $\mathcal{U}_{1/2}$-step and the $\mathcal{BU}_{1/2}$-step combined with the contraction result for the $\mathcal{BU}$-scheme (\Cref{thm:BU_contr})
    concludes the proof since by \eqref{eq:c_BU} $\frac{1}{1-ch}\le \frac{1}{1-e^{-\gamma h}\gamma h/16}\le 1+\gamma h/16$ and hence $\frac{\max( (1+\alpha\gamma h/2)^2, 1+\gamma h \max(1, L_k\gamma^{-2}))}{1-ch}\le \mathbf{C}$.

\end{proof}

To prove \Cref{thm:compl_UBU} and \Cref{thm:compl_UBU_2}, we first introduce an auxiliary result bounding the difference between the continuous time kinetic Langevin dynamics and the UBU discretization scheme.

\begin{lemma}\label{lem:UiT_UBU}
    Assume that $h <\min\left\{\frac{1}{2\gamma},\frac{\gamma}{2L}\right\}$. Consider the kinetic Langevin dynamics and the UBU scheme with synchronously coupled Brownian motion and with initial points $(\mathbf{X}_{0},\mathbf{V}_{0}) = (X_{0},V_{0}) \sim \mu_{\infty}$. Assume that Assumption \ref{ass} and Assumption \ref{ass:hessian_lip} is satisfied and, then for $k \in \mathbb{N}$ we have that
    \begin{align*}
        &\alpha\mathbb{E}\left[\left|\mathbf{X}_{k} - X_{kh}\right|\right] + \mathbb{E}\left[\left|\mathbf{X}_{k} - X_{kh} + \gamma^{-1}\left(\mathbf{V}_{k} - V_{kh}\right)\right|\right] \leq e^{3\max\left\{\gamma,\frac{2L}{\gamma}\right\}hk}(\mathbf{M}_{b} + \alpha(k+1)\mathbf{M}_{a}),
    \end{align*}
    where
    \begin{align*}
        \mathbf{M}_{a} &= \frac{h^{3}}{24}\left(\left(\frac{\sqrt{42}}{2} + 1\right)L + \gamma L^{1/2}\right)d^{1/2},\\
        \mathbf{M}_{b} &= \frac{\gamma^{-1}(k+1)Lh^{2}\sqrt{d}}{4},
    \end{align*}
    and if Assumption \ref{ass:hessian_lip} is satisfied $\mathbf{M}_{b}$ is refined to
    \begin{align*}
        \mathbf{M}_{b} &= \gamma^{-1}(k+1)\frac{h^{3}\sqrt{d}}{24}\left(\sqrt{3}L_{1}\sqrt{d} + L^{3/2} + \gamma L\right) + \sqrt{2\gamma^{-1}}\sqrt{\frac{(k+1)h^{5}L^{2}d}{192}},
    \end{align*}
    further if Assumption \ref{ass:strong_hessian_lip} is satisfied this $\mathbf{M}_{b}$ is refined to
    \begin{equation*}
        \mathbf{M}_{b} = \gamma^{-1}(k+1)\frac{h^{3}\sqrt{d}}{24}\left(\sqrt{3}L^{s}_{1} + L^{3/2} + \gamma L\right) + \sqrt{2\gamma^{-1}}\sqrt{\frac{(k+1)h^{5}L^{2}d}{192}}.
    \end{equation*}
\end{lemma}

\begin{proof}
\cite{sanz2021wasserstein} use a different formulation of the solution of kinetic Langevin dynamics, which is derived by using It\^o's formula on the product $e^{\gamma t}V_{t}$. For initial condition $(X_0,V_0) \in \mathbb{R}^{2d}$ the solution of \eqref{eq:LD} can be written as:
\begin{align}
V_{t} &= \mathcal{E}(t)V_{0} - \int^{t}_{0}\mathcal{E}(t-s)\nabla U(X_{s})\rmd s + \sqrt{2\gamma}\int^{t}_{0}\mathcal{E}(t-s)\rmd B_s, \label{eq:cont_v}\\
X_{t} &= X_0 + \mathcal{F}(t)V_{0} - \int^{t}_{0}\mathcal{F}(t-s)\nabla U(X_{s})\rmd s + \sqrt{2\gamma}\int^{t}_{0}\mathcal{F}(t-s)\rmd B_{s} \label{eq:cont_x},
\end{align}
where
\begin{equation}\label{eq:EFdef}
\mathcal{E}(t) = e^{-\gamma t}\qquad \mathcal{F}(t) = \frac{1-e^{-\gamma t}}{\gamma}.
\end{equation}
Further the UBU scheme (as in \cite{sanz2021wasserstein}) can be expressed as 
\begin{align}
\mathbf{V}_{k+1} &= \mathcal{E}(h)\mathbf{V}_k - h\mathcal{E}(h/2)\nabla U(\overline{\mathbf{X}}_{k}) + \sqrt{2\gamma}\int^{(k+1)h}_{kh}\mathcal{E}((k+1)h-s)\rmd B_s,\label{eq:disc_v}\\
\overline{\mathbf{X}}_{k} &= \mathbf{X}_{k} + \mathcal{F}(h/2)\mathbf{V}_{k} + \sqrt{2\gamma}\int^{(k+1/2)h}_{kh}\mathcal{F}((k+1/2)h-s)\rmd B_s, \label{eq:disc_y}\\ 
\mathbf{X}_{k+1} &= \mathbf{X}_{k} + \mathcal{F}(h)\mathbf{V}_{k} - h\mathcal{F}(h/2)\nabla U(\overline{\mathbf{X}}_{k}) + \sqrt{2\gamma}\int^{(k+1)h}_{kh} \mathcal{F}((k+1)h-s)\rmd B_s,\label{eq:disc_x}
\end{align}
this is convenient for comparison with \eqref{eq:cont_v} and \eqref{eq:cont_x}. 

If we consider synchronously coupled Brownian motion with $(X_{0},V_{0}) = (\mathbf{X}_{0},\mathbf{V}_{0}) \sim \mu_{\infty}$, let us define
\begin{align*}
    a_{k} &:= \alpha\mathbb{E}\left[\left|\mathbf{X}_{k} - X_{kh}\right|\right], \qquad b_{k} := \mathbb{E}\left[\left|\mathbf{X}_{k} - X_{kh} + \gamma^{-1}\left(\mathbf{V}_{k} - V_{kh}\right)\right|\right],
\end{align*}
then we first have 
\begin{align*}
   &a_{k+1} \\
   &= \alpha\mathbb{E}\left[\left|\mathbf{X}_{k}-X_{kh} + \mathcal{F}(h)\left(\mathbf{V}_{k} - V_{kh}\right) - h\mathcal{F}(h/2)\nabla U(\overline{\mathbf{X}}_{k}) + \int^{h}_{0}\mathcal{F}(h-s)\nabla U(X_{kh+s})\rmd s\right|\right]\\
   &\leq \mathbb{E}\left[\alpha\left|\mathbf{X}_{k}-X_{kh} \right| + h\alpha\left|\mathbf{V}_{k}-V_{kh}\right| + \alpha\left|h\mathcal{F}(h/2)\nabla U(\overline{\mathbf{X}}_{k}) - \int^{h}_{0}\mathcal{F}(h-s)\nabla U(X_{kh+s})\rmd s\right|\right]\\
   &\leq a_{0} + \sum^{k}_{i=1}h\gamma(\alpha b_{i} + a_{i}) +  \alpha\sum^{k}_{i=0}\mathbb{E}\left[\left|h\mathcal{F}(h/2)\nabla U(\overline{\mathbf{X}}_{i}) - \int^{h}_{0}\mathcal{F}(h-s)\nabla U(X_{ih+s})\rmd s\right|\right],
\end{align*}
where following Section 7.6 of \cite{sanz2021wasserstein} we expand $\mathcal{F}(h-s)\nabla U(X_{kh+s})$ by the fundamental theorem of calculus and get 
\[
h\mathcal{F}(h/2)\nabla U(\overline{\mathbf{X}}_{i}) - \int^{h}_{0}\mathcal{F}(h-s)\nabla U(X_{ih+s})\rmd s = -h\mathcal{F}(h/2)(\nabla U(X_{ih+1/2}) - \nabla U(\overline{\mathbf{X}}_{i})) + I_{1} + I_{2},
\]
where $I_{1}$ and $I_{2}$ are defined as in \cite{sanz2021wasserstein}
\begin{align*}
    I_{1} &= -\int^{(i+1)h}_{ih}\int^{s}_{(i+1/2)h}\mathcal{F}((i+1)h-s')\nabla^{2}U(X_{s'})V_{s'}\rmd s'\rmd s\\
    I_{2} &= -\int^{(i+1)h}_{ih}\int^{s}_{(i+1/2)h}\mathcal{E}((i+1)h-s')\nabla U(X_{s'})\rmd s'\rmd s,
\end{align*}
and they show the following bounds
\begin{align*}
    \mathbb{E}\left|I_{1} + I_{2}\right| &\leq \left(\mathbb{E}\left|I_{1}\right|^{2}\right)^{1/2} + \left(\mathbb{E}\left|I_{2}\right|^{2}\right)^{1/2}\leq \frac{h^{3}}{24}\left(\left(\frac{\sqrt{42}}{2} + 1\right)L + \gamma L^{1/2}\right)d^{1/2} := \mathbf{M}_{a}.
\end{align*} We can also bound
\begin{align*}
    \mathbb{E}\left[\left|-h\mathcal{F}(h/2)(\nabla U(X_{(i+1/2)h}) - \nabla U(\overline{\mathbf{X}}_{i}))\right|\right]\leq \frac{h^{2}}{2}L\mathbb{E}\left(\left|X_{ih} - \mathbf{X}_{i}\right| + \frac{h}{2}\left| V_{ih}-\mathbf{V}_{i}\right|\right),
\end{align*}
and combining these results and using that $h<\min\left\{\frac{1}{2\gamma},\frac{\gamma}{2L}\right\},$ we have 
\begin{align*}
    &a_{k+1} \leq 2h\gamma\sum^{k}_{i=1}(\alpha b_{i} + a_{i}) + \alpha (k+1)\mathbf{M}_{a}.
\end{align*}

If we consider $b_{k}$ we have
\begin{align*}
   &b_{k+1} \leq b_{0} + \gamma^{-1}\mathbb{E}\left[\left|\sum^{k}_{i=0}\int^{h}_{0}(\nabla U(X_{ih+s}) - \nabla U(\overline{\mathbf{X}}_{i}))\rmd s\right|\right]\\
   &\leq b_{0} + \gamma^{-1}\mathbb{E}\left[\left|\sum^{k}_{i=0}\int^{h}_{0}(\nabla U(X_{ih+s}) - \nabla U(X_{(i+1/2)h}))\rmd s\right|\right] 
   \\ & + h\gamma^{-1}\sum^{k}_{i=0}\mathbb{E}\left[\left|\nabla U(\overline{\mathbf{X}}_{i}) - \nabla U(X_{(i+1/2)h})\right|\right] 
\end{align*}
where we now want to bound the second term as we have an estimate for the third term by the previous arguments. Using \^Ito-Taylor expansion twice as in \cite{sanz2021wasserstein} we have that
\begin{align*}
    \int^{(i+1)h}_{ih}\nabla U(X_{s})\rmd s = h\nabla U(X_{(i+1/2)h}) +  \int^{(i+1)h}_{(i+1/2)h}\int^{s}_{(i+1/2)h}\int^{s'}_{2(i+1/2)h-s'}\rmd (\nabla^{2} U(X_{s''})v)\rmd s'\rmd s,
\end{align*}
where we can use \^Ito's formula on $\nabla^{2} U(X_{s''})v$ to get that
\[
\int^{(i+1)h}_{ih}(\nabla U(X_{s}) - \nabla U(X_{(i+1/2)h}))\rmd s =  I_{3} + I_{4} + I_{5} + I_{6},
\]
where
\begin{align*}
    I_{3} &= \int^{(i+1)h}_{(i+1/2)h}\int^{s}_{(i+1/2)h}\int^{s'}_{2(i+1/2)h-s'}\nabla^{3}U(X_{s''})[V_{s''},V_{s''}]\rmd s''\rmd s'\rmd s\\
    I_{4} &= -\gamma\int^{(i+1)h}_{(i+1/2)h}\int^{s}_{(i+1/2)h}\int^{s'}_{2(i+1/2)h-s'}\nabla^{2}U(X_{s''})V_{s''}\rmd s''\rmd s'\rmd s\\
    I_{5} &= -\int^{(i+1)h}_{(i+1/2)h}\int^{s}_{(i+1/2)h}\int^{s'}_{2(i+1/2)h-s'}\nabla^{2}U(X_{s''})\nabla U(X_{s''})\rmd s''\rmd s'\rmd s\\
    I_{6} &=\sqrt{2\gamma}\int^{(i+1)h}_{(i+1/2)h}\int^{s}_{(i+1/2)h}\int^{s'}_{2(i+1/2)h-s'}\nabla^{2}U(X_{s''})\rmd B_{s''} \rmd s'\rmd s.
\end{align*}
We first bound $I_{3},I_{4}$ and $I_{5}$ as follows. For $I_{3}$ we use the fact that each scalar component of $V_{s''} \sim \mathcal{N}(0,I_{d})$ and therefore under Assumption \ref{ass:strong_hessian_lip} and \cite[Lemma 7]{paulin2024}
\[
\mathbb{E}\left[\left\|\nabla^{3}U(X_{s''})[V_{s''},V_{s''}]\right\|\right]\leq L^{s}_{1}\sqrt{3d}
\]
and therefore
\begin{align*}
\mathbb{E}\left[\left| I_{3}\right|\right]& \leq  \frac{L^{s}_{1}\sqrt{3d}h^{3}}{24}.
\end{align*}
Under Assumption \ref{ass:hessian_lip} similarly we have
\begin{align*}
\mathbb{E}\left[\left| I_{3}\right|\right]& \leq  \frac{\sqrt{3}L_{1}dh^{3}}{24}.
\end{align*}
To bound $I_{4}$ and $I_{5}$, we observe 
\begin{align*}
    \mathbb{E}\left[\left\|\nabla^{2}U(X_{s''})V_{s''}\right\|\right]&\leq L\mathbb{E}\left(\left\|V_{s''}\right\|\right) \leq L\sqrt{d}, && \text{and}
    \\ \mathbb{E}\left[\left\|\nabla^{2}U(X_{s''})\nabla U(X_{s''})\right\|\right]&\leq L\mathbb{E}\left(\|\nabla U(X_{s''})\|\right) \leq L^{3/2}\sqrt{d}
\end{align*}
and therefore
\begin{align*}
    \mathbb{E}\left[\left| I_{4}\right|\right]& \leq \frac{\gamma L\sqrt{d}h^{3}}{24}, 
    \mathbb{E}\left[\left| I_{5}\right|\right] \leq \frac{L^{3/2}\sqrt{d}h^{3}}{24}.
\end{align*}
Then we have that 
\begin{align*}
    &\gamma^{-1}\mathbb{E}\left[\left|\sum^{k}_{i=0}\int^{h}_{0}(\nabla U(X_{ih+s}) - \nabla U(X_{(i+1/2)h}))\rmd s\right|\right] \\
    &\leq \gamma^{-1}(k+1)\frac{h^{3}\sqrt{d}}{24}\left(L_{1} + L^{3/2} + \gamma L \right) \\
    &+ \sqrt{2\gamma^{-1}}\mathbb{E}\left[\left|\sum^{k}_{i=0}\int^{(i+1)h}_{(i+1/2)h}\int^{s}_{(i+1/2)h}\int^{s'}_{2(i+1/2)h - s'}\nabla^{2}U(X_{s''})\rmd B_{s''}\rmd s'\rmd s\right|\right].
\end{align*}
We can estimate the final expectation by using Jensen's inequality and considering the following estimate of the second moment
\begin{align*}
    &\mathbb{E}\left[\left|\sum^{k}_{i=0}\int^{(i+1)h}_{(i+1/2)h}\int^{s}_{(i+1/2)h}\int^{s'}_{2(i+1/2)h - s'}\nabla^{2}U(X_{s''})\rmd B_{s''}\rmd s'\rmd s\right|^{2}\right] \\
    \intertext{using the fact that the Brownian motions are over disjoint time intervals corresponding to each $0\leq i \leq k$  we can apply Fubini's theorem and \^Ito's isometry to get}&=\sum^{k}_{i=0}\mathbb{E}\left[\left|\int^{(i+1)h}_{(i+1/2)h}\int^{s}_{(i+1/2)h}\int^{s'}_{2(i+1/2)h - s'}\nabla^{2}U(X_{s''})\rmd B_{s''}\rmd s'\rmd s\right|^{2}\right] \leq \frac{(k+1)h^{5}L^{2}d}{192}.
\end{align*}
We define  $\mathbf{M}_{b} := \gamma^{-1}(k+1)\frac{h^{3}\sqrt{d}}{24}\left(\mathbf{M}_c + L^{3/2} + \gamma L\right) + \sqrt{2\gamma^{-1}}\sqrt{\frac{(k+1)h^{5}L^{2}d}{192}}$ with $\mathbf{M}_c=\sqrt{3}L_1^s$ under Assumption \ref{ass:strong_hessian_lip} and $\mathbf{M}_c=\sqrt{3}L_1\sqrt{d}$ under Assumption \ref{ass:hessian_lip}.

Further when Assumption \ref{ass:hessian_lip} and Assumption \ref{ass:strong_hessian_lip} are not satisfied we use the fundamental theorem of calculus to have that $\nabla U(X_{ih+s}) = \nabla U (X_{(i+1/2)h}) + \int^{ih +s}_{(i+1/2)h}\nabla^{2}U(X_{s'})V_{s'}\rmd s'$ and therefore we estimate 
\begin{align*}
    &\gamma^{-1}\mathbb{E}\left[\left|\sum^{k}_{i=0}\int^{h}_{0}(\nabla U(X_{ih+s}) - \nabla U(\overline{\mathbf{X}}_{i}))\rmd s\right|\right] \\
    &\leq h\gamma^{-1}\sum^{k}_{i=0}\mathbb{E}\left[\left|\nabla U(X_{(i+1/2)h}) -\nabla U(\overline{\mathbf{X}}_{i})\right|\right] + \frac{\gamma^{-1}(k+1)Lh^{2}\sqrt{d}}{4}.
\end{align*}
In this case, we define $\mathbf{M}_{b} := \frac{\gamma^{-1}(k+1)Lh^{2}\sqrt{d}}{4}$.

Combining the estimates we  have
\begin{align*}
    b_{k+1} &\leq  h\sum^{k}_{i=1}\left(\gamma a_{i} + \frac{hL}{2}b_{i}\right) + \mathbf{M}_{b},
\end{align*}
and combining this with the iteration inequality for $a_{k+1}$ we obtain
\begin{align*}
    a_{k+1} + b_{k+1} &\leq h\sum^{k}_{i=1}\left(3\gamma a_{i} + \frac{6L}{\gamma}b_{i}\right)+ (\mathbf{M}_{b} + \alpha(k+1)\mathbf{M}_{a}) \\
    &\leq 3h\max\left\{\gamma,\frac{2L}{\gamma}\right\}\sum^{k}_{i=1}\left(a_{i} + b_{i}\right) + (\mathbf{M}_{b} + \alpha(k+1)\mathbf{M}_{a}).
\end{align*}
Now we note that the sequence $\left(a_{k} + b_{k}\right)_{k \in \mathbb{N}}$ is bounded from above by the sequence $(c_{k})_{k\in \mathbb{N}}$ satisfying
\[
c_{k+1} := 3h\max\left\{\gamma,\frac{2L}{\gamma}\right\}\sum^{k}_{i=1}c_{i} + (\mathbf{M}_{b} + \alpha(k+1)\mathbf{M}_{a}),
\]
and $c_{1} = \mathbf{M}_{b} + \alpha(k+1)\mathbf{M}_{a}$. Then, we have
\begin{align*}
    a_{k+1} + b_{k+1} &\leq c_{k+1} = \left(1 + 3h\max\left\{\gamma,\frac{2L}{\gamma}\right\}\right)c_{k}\\
    &\leq \left(1 + 3h\max\left\{\gamma,\frac{2L}{\gamma}\right\}\right)^{k}c_{1} \leq e^{3\max\left\{\gamma,\frac{2L}{\gamma}\right\}hk}(\mathbf{M}_{b} + \alpha(k+1)\mathbf{M}_{a}).
\end{align*}
\end{proof}

\begin{proof}[Proof of Theorem~\ref{thm:compl_UBU}]
    Inspired by the interpolation argument used in \cite{leimkuhler2023contractiona} we define 
    $(\mathbf{X}_{l},\mathbf{V}_{l})$ as $l$ steps of the UBU scheme and $(X_{lh},V_{lh})$ is defined by \eqref{eq:LD} at time $lh \geq 0$, where these are both initialized at $ (X_{0},V_{0}) = (\mathbf{X}_{0},\mathbf{V}_{0}) \sim \mu_{\infty}$ and have synchronously coupled Brownian motion. We further define a sequence of interpolating variants $(\mathbf{X}^{(k)}_{l},\mathbf{V}^{(k)}_{l})$ for every $k = 0,...,l$ all initialized $(\mathbf{X}^{(k)}_{0},\mathbf{V}^{(k)}_{0}) = (\mathbf{X}_{0},\mathbf{V}_{0})$, where we define $(\mathbf{X}^{(k)}_{i},\mathbf{V}^{(k)}_{i})^{k}_{i=1} := (X_{ih},V_{ih})^{k}_{i=1}$ and  $(\mathbf{X}^{(k)}_{i},\mathbf{V}^{(k)}_{i})^{l}_{i=k+1}$ by UBU steps and for $k = l$ we simply have just the continuous diffusion \eqref{eq:LD}. I.e. $(\mathbf{X}^{(k)}_{l},\mathbf{V}^{(k)}_{l})$ is defined by $k$ steps of the continuous time process, followed by $l-k$ steps of the discretization. Using Lemma \ref{lem:UiT_UBU} we split up the steps into blocks of size $\Tilde{l} = \left\lceil \frac{1}{3h\max\left\{\gamma,\frac{2L}{\gamma}\right\}}\right\rceil$ as 
    \begin{align*}
        \mathcal{W}_{\rho}&(\mu_{\infty}\pi^{l},\mu_{\infty}\pi^{l}_{h}) 
        \\ &\leq  \mathcal{W}_{\rho}\left(\mu_{\infty}\pi^{\lfloor l/\Tilde{l}\rfloor \Tilde{l}}\pi^{l-\lfloor l/\Tilde{l}\rfloor \Tilde{l}}_{h},\mu_{\infty}\pi^{l}\right) + \sum^{\lfloor l/\Tilde{l} \rfloor - 1}_{j = 0} \mathcal{W}_{\rho} \left(\mu_{\infty}\pi^{j\Tilde{l}}\pi^{l-j\Tilde{l}}_{h} ,\mu_{\infty} \pi^{(j+1)\Tilde{l}}\pi_{h}^{l-(j+1)\Tilde{l}}\right).
    \end{align*}
    Using the fact that \eqref{eq:LD} preserves $\mu_{\infty}$ and the remaining steps follow the UBU scheme for which we have contraction we can use Lemma \ref{lem:UiT_UBU} and Theorem \ref{thm:UBU_contr} to achieve
    \[
     \mathcal{W}_{\rho} \left(\mu_{\infty}\pi^{j\Tilde{l}}\pi^{l-j\Tilde{l}}_{h} ,\mu_{\infty} \pi^{(j+1)\Tilde{l}}\pi_{h}^{l-(j+1)\Tilde{l}}\right) \leq \mathbf{C}e^{3\max\left\{\gamma,\frac{2L}{\gamma}\right\}h\Tilde{l}}(\mathbf{M}_{b} + \alpha (\Tilde{l}+1)\mathbf{M}_{a})(1-ch)^{l-(j+1)\Tilde{l}},
    \]
    where $\mathbf{M}_{b}$ depends on $\Tilde{l}$ and $\mathbf{C}$ is defined as in \eqref{eq:contr_UBU_C}.
    Summing up the terms we have that
    \begin{align*}
        \mathcal{W}_{\rho}(\mu_{\infty}\pi^{l},\mu_{\infty}\pi^{l}_{h}) \leq \frac{15\mathbf{C}(\mathbf{M}_{b} + \alpha (\Tilde{l}+1)\mathbf{M}_{a})}{1-(1-ch)^{\Tilde{l}}}
    \end{align*}
    and taking the limit as $l \to \infty$ in the following estimate we have
    \begin{align*}
        \mathcal{W}_{\rho}(\mu_{\infty},\mu_{h,\infty})&=\mathcal{W}_{\rho}(\mu_{\infty}\pi^l,\mu_{h,\infty}\pi_h^l)\le \mathcal{W}_{\rho}(\mu_{\infty}\pi^l,\mu_{\infty}\pi_h^l)+\mathcal{W}_{\rho}(\mu_{\infty}\pi_h^l,\mu_{h,\infty}\pi_h^l)
        \\ & \le \mathcal{W}_{\rho}(\mu_{\infty}\pi^l,\mu_{\infty}\pi_h^l)+\mathbf{C}(1-ch)^l\mathcal{W}_{\rho}(\mu_{\infty},\mu_{h,\infty})\\
        &\leq \frac{15\mathbf{C}(\mathbf{M}_{b} + \alpha (\Tilde{l}+1)\mathbf{M}_{a})}{1-(1-ch)^{\Tilde{l}}} \leq 15\mathbf{C}(\mathbf{M}_{b} + \alpha (\Tilde{l}+1)\mathbf{M}_{a})\left(1 + \frac{1}{ch\Tilde{l}}\right)\\
        &\leq 15\mathbf{C}(\mathbf{M}_{b} + \alpha (\Tilde{l}+1)\mathbf{M}_{a})\left(1 + \frac{3\max{\left\{\gamma, \frac{2L}{\gamma}\right\}}}{c}\right),
    \end{align*}
    where we have used that $1/(1-e^{-x}) \leq 1 + 1/x$ for all $x > 0$ and the result follows, where we simplify the estimate.
\end{proof}

\begin{proof}[Proof of Theorem~\ref{thm:compl_UBU_2}]
    The results follows by applying triangle inequality, \Cref{thm:compl_UBU}, \Cref{thm:UBU_contr} and using \eqref{eq:dist_equiv}.
\end{proof}

\bibliographystyle{plain}
\bibliography{sample.bib}

\begin{thebibliography}{10}

\bibitem{albritton2019variational}
Dallas Albritton, Scott Armstrong, J-C Mourrat, and Matthew Novack.
\newblock Variational methods for the kinetic {F}okker-{P}lanck equation.
\newblock {\em arXiv preprint arXiv:1902.04037}, 2019.

\bibitem{bakry2006diffusions}
D.~Bakry and Michel \'{E}mery.
\newblock Diffusions hypercontractives.
\newblock In {\em S\'{e}minaire de probabilit\'{e}s, {XIX}, 1983/84}, volume
  1123 of {\em Lecture Notes in Math.}, pages 177--206. Springer, Berlin, 1985.

\bibitem{baudoin2016wasserstein}
Fabrice Baudoin.
\newblock {W}asserstein contraction properties for hypoelliptic diffusions.
\newblock {\em arXiv preprint arXiv:1602.04177}, 2016.

\bibitem{baudoin2017bakry}
Fabrice Baudoin.
\newblock Bakry-\'{E}mery meet {V}illani.
\newblock {\em J. Funct. Anal.}, 273(7):2275--2291, 2017.

\bibitem{benedetto1998granular}
D.~Benedetto, E.~Caglioti, J.~A. Carrillo, and M.~Pulvirenti.
\newblock A non-{M}axwellian steady distribution for one-dimensional granular
  media.
\newblock {\em J. Statist. Phys.}, 91(5-6):979--990, 1998.

\bibitem{bolley2010trend}
Fran\c{c}ois Bolley, Arnaud Guillin, and Florent Malrieu.
\newblock Trend to equilibrium and particle approximation for a weakly
  selfconsistent {V}lasov-{F}okker-{P}lanck equation.
\newblock {\em M2AN Math. Model. Numer. Anal.}, 44(5):867--884, 2010.

\bibitem{bou2020coupling}
Nawaf Bou-Rabee, Andreas Eberle, and Raphael Zimmer.
\newblock Coupling and convergence for {H}amiltonian {M}onte {C}arlo.
\newblock {\em Ann. Appl. Probab.}, 30(3):1209--1250, 2020.

\bibitem{bou2023randomized}
Nawaf Bou-Rabee and Tore~Selland Kleppe.
\newblock Randomized {R}unge-{K}utta-{N}ystr{$\backslash$"}om.
\newblock {\em arXiv preprint arXiv:2310.07399}, 2023.

\bibitem{bou2022unadjusted}
Nawaf Bou-Rabee and Milo Marsden.
\newblock Unadjusted {H}amiltonian {M}{C}{M}{C} with stratified {M}onte {C}arlo
  time integration.
\newblock {\em arXiv preprint arXiv:2211.11003}, 2022.

\bibitem{bou2023mixing}
Nawaf Bou-Rabee and Stefan Oberd{\"o}rster.
\newblock {M}ixing of {M}etropolis-{A}djusted {M}arkov {C}hains via
  {C}ouplings: {T}he {H}igh {A}cceptance {R}egime.
\newblock {\em arXiv preprint arXiv:2308.04634}, 2023.

\bibitem{bou2023convergence}
Nawaf Bou-Rabee and Katharina Schuh.
\newblock Convergence of unadjusted {H}amiltonian {M}onte {C}arlo for
  mean-field models.
\newblock {\em Electron. J. Probab.}, 28:Paper No. 91, 40, 2023.

\bibitem{bou2023nonlinear}
Nawaf Bou-Rabee and Katharina Schuh.
\newblock Nonlinear {H}amiltonian {M}onte {C}arlo \& its {P}article
  {A}pproximation.
\newblock {\em arXiv preprint arXiv:2308.11491}, 2023.

\bibitem{brigati2023construct}
Giovanni Brigati and Gabriel Stoltz.
\newblock How to construct decay rates for kinetic {F}okker--{P}lanck
  equations?
\newblock {\em arXiv preprint arXiv:2302.14506}, 2023.

\bibitem{brunger1984stochastic}
Axel Br{\"u}nger, Charles~L Brooks~III, and Martin Karplus.
\newblock Stochastic boundary conditions for molecular dynamics simulations of
  st2 water.
\newblock {\em Chemical physics letters}, 105(5):495--500, 1984.

\bibitem{bussi2007accurate}
Giovanni Bussi and Michele Parrinello.
\newblock Accurate sampling using {L}angevin dynamics.
\newblock {\em Physical Review E}, 75(5):056707, 2007.

\bibitem{camrud2023second}
Evan Camrud, Alain~Oliviero Durmus, Pierre Monmarch{\'e}, and Gabriel Stoltz.
\newblock Second order quantitative bounds for unadjusted generalized
  hamiltonian monte carlo.
\newblock {\em arXiv preprint arXiv:2306.09513}, 2023.

\bibitem{Cao2023}
Yu~Cao, Jianfeng Lu, and Lihan Wang.
\newblock On explicit {$L^2$}-convergence rate estimate for underdamped
  {L}angevin dynamics.
\newblock {\em Arch. Ration. Mech. Anal.}, 247(5):Paper No. 90, 34, 2023.

\bibitem{chada2023unbiased}
Neil~K Chada, Benedict Leimkuhler, Daniel Paulin, and Peter~A Whalley.
\newblock Unbiased {K}inetic {L}angevin {M}onte {C}arlo with {I}nexact
  {G}radients.
\newblock {\em arXiv preprint arXiv:2311.05025}, 2023.

\bibitem{chak2023reflection}
Martin Chak and Pierre Monmarch{\'e}.
\newblock Reflection coupling for unadjusted generalized hamiltonian monte
  carlo in the nonconvex stochastic gradient case.
\newblock {\em arXiv preprint arXiv:2310.18774}, 2023.

\bibitem{chen2014stochastic}
Tianqi Chen, Emily Fox, and Carlos Guestrin.
\newblock Stochastic gradient {H}amiltonian {M}onte {C}arlo.
\newblock In {\em International conference on machine learning}, pages
  1683--1691. PMLR, 2014.

\bibitem{chen2023does}
Yuansi Chen and Khashayar Gatmiry.
\newblock When does {M}etropolized {H}amiltonian {M}onte {C}arlo provably
  outperform {M}etropolis-adjusted {L}angevin algorithm?
\newblock {\em arXiv preprint arXiv:2304.04724}, 2023.

\bibitem{cheng2018sharp}
Xiang Cheng, Niladri~S Chatterji, Yasin Abbasi-Yadkori, Peter~L Bartlett, and
  Michael~I Jordan.
\newblock Sharp convergence rates for {L}angevin dynamics in the nonconvex
  setting.
\newblock {\em arXiv preprint arXiv:1805.01648}, 2018.

\bibitem{cheng2018underdamped}
Xiang Cheng, Niladri~S Chatterji, Peter~L Bartlett, and Michael~I Jordan.
\newblock Underdamped {L}angevin {M}{C}{M}{C}: {A} non-asymptotic analysis.
\newblock In {\em {C}onference {O}n {L}earning {T}heory}, pages 300--323. PMLR,
  2018.

\bibitem{Dalalyan2017}
Arnak~S. Dalalyan.
\newblock Theoretical guarantees for approximate sampling from smooth and
  log-concave densities.
\newblock {\em J. R. Stat. Soc. Ser. B. Stat. Methodol.}, 79(3):651--676, 2017.

\bibitem{dalalyan2020sampling}
Arnak~S. Dalalyan and Lionel Riou-Durand.
\newblock On sampling from a log-concave density using kinetic {L}angevin
  diffusions.
\newblock {\em Bernoulli}, 26(3):1956--1988, 2020.

\bibitem{deligiannidis2021randomized}
George Deligiannidis, Daniel Paulin, Alexandre Bouchard-C\^{o}t\'{e}, and
  Arnaud Doucet.
\newblock Randomized {H}amiltonian {M}onte {C}arlo as scaling limit of the
  bouncy particle sampler and dimension-free convergence rates.
\newblock {\em Ann. Appl. Probab.}, 31(6):2612--2662, 2021.

\bibitem{dolbeault2009hypocoercivity}
Jean Dolbeault, Cl\'{e}ment Mouhot, and Christian Schmeiser.
\newblock Hypocoercivity for kinetic equations with linear relaxation terms.
\newblock {\em C. R. Math. Acad. Sci. Paris}, 347(9-10):511--516, 2009.

\bibitem{dolbeault2015hypocoercivity}
Jean Dolbeault, Cl\'{e}ment Mouhot, and Christian Schmeiser.
\newblock Hypocoercivity for linear kinetic equations conserving mass.
\newblock {\em Trans. Amer. Math. Soc.}, 367(6):3807--3828, 2015.

\bibitem{durmus2021uniform}
Alain Durmus, Aur{\'e}lien Enfroy, {\'E}ric Moulines, and Gabriel Stoltz.
\newblock Uniform minorization condition and convergence bounds for
  discretizations of kinetic {L}angevin dynamics.
\newblock {\em arXiv preprint arXiv:2107.14542}, 2021.

\bibitem{Durmus2017}
Alain Durmus and \'{E}ric Moulines.
\newblock Nonasymptotic convergence analysis for the unadjusted {L}angevin
  algorithm.
\newblock {\em Ann. Appl. Probab.}, 27(3):1551--1587, 2017.

\bibitem{eberle2019couplings}
Andreas Eberle, Arnaud Guillin, and Raphael Zimmer.
\newblock Couplings and quantitative contraction rates for {L}angevin dynamics.
\newblock {\em Ann. Probab.}, 47(4):1982--2010, 2019.

\bibitem{foster2021shifted}
James Foster, Terry Lyons, and Harald Oberhauser.
\newblock The shifted ode method for underdamped {L}angevin {M}{C}{M}{C}.
\newblock {\em arXiv preprint arXiv:2101.03446}, 2021.

\bibitem{foster2024high}
James~M Foster, Gon{\c{c}}alo dos Reis, and Calum Strange.
\newblock {H}igh {O}rder {S}plitting {M}ethods for {S}{D}{E}s {S}atisfying a
  {C}ommutativity {C}ondition.
\newblock {\em SIAM Journal on Numerical Analysis}, 62(1):500--532, 2024.

\bibitem{giles2015multilevel}
Michael~B. Giles.
\newblock Multilevel {M}onte {C}arlo methods.
\newblock {\em Acta Numer.}, 24:259--328, 2015.

\bibitem{gouraud2022hmc}
Nicola\"{\i} Gouraud, Pierre~Le Bris, Adrien Majka, and Pierre Monmarch\'{e}.
\newblock Hmc and underdamped langevin united in the unadjusted convex smooth
  case.
\newblock {\em SIAM/ASA Journal on Uncertainty Quantification}, 13(1):278--303,
  2025.

\bibitem{guillin2022}
Arnaud Guillin, Pierre Le~Bris, and Pierre Monmarch\'{e}.
\newblock Convergence rates for the {V}lasov-{F}okker-{P}lanck equation and
  uniform in time propagation of chaos in non convex cases.
\newblock {\em Electron. J. Probab.}, 27:Paper No. 124, 44, 2022.

\bibitem{Hu2021}
Kaitong Hu, Zhenjie Ren, David \v{S}i\v{s}ka, and \L~ukasz Szpruch.
\newblock Mean-field {L}angevin dynamics and energy landscape of neural
  networks.
\newblock {\em Ann. Inst. Henri Poincar\'{e} Probab. Stat.}, 57(4):2043--2065,
  2021.

\bibitem{jasra2005markov}
A.~Jasra, C.~C. Holmes, and D.~A. Stephens.
\newblock Markov chain {M}onte {C}arlo methods and the label switching problem
  in {B}ayesian mixture modeling.
\newblock {\em Statist. Sci.}, 20(1):50--67, 2005.

\bibitem{kazeykina2020}
Anna Kazeykina, Zhenjie Ren, Xiaolu Tan, and Junjian Yang.
\newblock {Ergodicity of the underdamped mean-field Langevin dynamics}.
\newblock {\em arXiv preprint arXiv:2007.14660}, 2020.

\bibitem{leimkuhler2015molecular}
Ben Leimkuhler and Charles Matthews.
\newblock {\em Molecular dynamics}, volume~39 of {\em Interdisciplinary Applied
  Mathematics}.
\newblock Springer, Cham, 2015.
\newblock With deterministic and stochastic numerical methods.

\bibitem{leimkuhler2013rational}
Benedict Leimkuhler and Charles Matthews.
\newblock Rational construction of stochastic numerical methods for molecular
  sampling.
\newblock {\em Appl. Math. Res. Express. AMRX}, pages 34--56, 2013.

\bibitem{leimkuhler2013robust}
Benedict Leimkuhler and Charles Matthews.
\newblock Robust and efficient configurational molecular sampling via
  {L}angevin dynamics.
\newblock {\em The Journal of chemical physics}, 138(17), 2013.

\bibitem{leimkuhler2016computation}
Benedict Leimkuhler, Charles Matthews, and Gabriel Stoltz.
\newblock The computation of averages from equilibrium and nonequilibrium
  {L}angevin molecular dynamics.
\newblock {\em IMA J. Numer. Anal.}, 36(1):13--79, 2016.

\bibitem{leimkuhler2023contractionb}
Benedict Leimkuhler, Daniel Paulin, and Peter~A. Whalley.
\newblock Contraction rate estimates of stochastic gradient kinetic {L}angevin
  integrators.
\newblock {\em ESAIM Math. Model. Numer. Anal.}, 58(6):2255--2286, 2024.

\bibitem{leimkuhler2023contractiona}
Benedict~J. Leimkuhler, Daniel Paulin, and Peter~A. Whalley.
\newblock Contraction and {C}onvergence {R}ates for {D}iscretized {K}inetic
  {L}angevin {D}ynamics.
\newblock {\em SIAM Journal on Numerical Analysis}, 62(3):1226--1258, 2024.

\bibitem{Liang2001}
Faming Liang and Wing~Hung Wong.
\newblock Real-parameter evolutionary {M}onte {C}arlo with applications to
  {B}ayesian mixture models.
\newblock {\em J. Amer. Statist. Assoc.}, 96(454):653--666, 2001.

\bibitem{NesterovMCMC}
Yi-An Ma, Niladri~S. Chatterji, Xiang Cheng, Nicolas Flammarion, Peter~L.
  Bartlett, and Michael~I. Jordan.
\newblock Is there an analog of {N}esterov acceleration for gradient-based
  {MCMC}?
\newblock {\em Bernoulli}, 27(3):1942--1992, 2021.

\bibitem{mangoubi2017rapid}
Oren Mangoubi and Aaron Smith.
\newblock {R}apid mixing of {H}amiltonian {M}onte {C}arlo on strongly
  log-concave distributions.
\newblock {\em arXiv preprint arXiv:1708.07114}, 2017.

\bibitem{mattingly2002ergodicity}
J.~C. Mattingly, A.~M. Stuart, and D.~J. Higham.
\newblock Ergodicity for {SDE}s and approximations: locally {L}ipschitz vector
  fields and degenerate noise.
\newblock {\em Stochastic Process. Appl.}, 101(2):185--232, 2002.

\bibitem{mclachlan2002splitting}
Robert~I. McLachlan and G.~Reinout~W. Quispel.
\newblock Splitting methods.
\newblock {\em Acta Numer.}, 11:341--434, 2002.

\bibitem{Mei2018}
Song Mei, Andrea Montanari, and Phan-Minh Nguyen.
\newblock A mean field view of the landscape of two-layer neural networks.
\newblock {\em Proc. Natl. Acad. Sci. USA}, 115(33):E7665--E7671, 2018.

\bibitem{mei2018mean}
Song Mei, Andrea Montanari, and Phan-Minh Nguyen.
\newblock A mean field view of the landscape of two-layer neural networks.
\newblock {\em Proc. Natl. Acad. Sci. USA}, 115(33):E7665--E7671, 2018.

\bibitem{melchionna2007design}
Simone Melchionna.
\newblock Design of quasisymplectic propagators for {L}angevin dynamics.
\newblock {\em The Journal of chemical physics}, 127(4), 2007.

\bibitem{Milstein2004}
G.~N. Milstein and M.~V. Tretyakov.
\newblock {\em Stochastic numerics for mathematical physics}.
\newblock Scientific Computation. Springer-Verlag, Berlin, 2004.

\bibitem{monmarche2021high}
Pierre Monmarch\'{e}.
\newblock High-dimensional {MCMC} with a standard splitting scheme for the
  underdamped {L}angevin diffusion.
\newblock {\em Electron. J. Stat.}, 15(2):4117--4166, 2021.

\bibitem{monmarche2022entropic}
Pierre Monmarch{\'e}.
\newblock An entropic approach for {H}amiltonian {M}onte {C}arlo: the idealized
  case.
\newblock {\em Ann. Appl. Probab.}, 2024.

\bibitem{paulin2024}
Daniel Paulin and Peter~A. Whalley.
\newblock Correction to ``{W}asserstein distance estimates for the
  distributions of numerical approximations to ergodic stochastic differential
  equations".
\newblock {\em arXiv preprint arXiv: arXiv:2402.08711}, 2024.

\bibitem{paulin2024sampling}
Daniel Paulin, Peter~A Whalley, Neil~K Chada, and Benedict Leimkuhler.
\newblock Sampling from {B}ayesian neural network posteriors with symmetric
  minibatch splitting {L}angevin dynamics.
\newblock {\em arXiv preprint arXiv:2410.19780}, 2024.

\bibitem{Rotskoff2022}
Grant Rotskoff and Eric Vanden-Eijnden.
\newblock Trainability and accuracy of artificial neural networks: An
  interacting particle system approach.
\newblock {\em Communications on Pure and Applied Mathematics},
  75(9):1889--1935, 2022.

\bibitem{sanz2021wasserstein}
Jesus~Mar\'{\i}a Sanz-Serna and Konstantinos~C. Zygalakis.
\newblock Wasserstein distance estimates for the distributions of numerical
  approximations to ergodic stochastic differential equations.
\newblock {\em J. Mach. Learn. Res.}, 22:Paper No. 242, 37, 2021.

\bibitem{schuh2022global}
Katharina Schuh.
\newblock Global contractivity for {L}angevin dynamics with
  distribution-dependent forces and uniform in time propagation of chaos.
\newblock {\em Ann. Inst. Henri Poincar\'{e} Probab. Stat.}, 2024.

\bibitem{shen2019randomized}
Ruoqi Shen and Yin~Tat Lee.
\newblock The randomized midpoint method for log-concave sampling.
\newblock {\em Advances in Neural Information Processing Systems}, 32, 2019.

\bibitem{Sirignano2022}
Justin Sirignano and Konstantinos Spiliopoulos.
\newblock Mean field analysis of deep neural networks.
\newblock {\em Math. Oper. Res.}, 47(1):120--152, 2022.

\bibitem{skeel1999integration}
Robert~D Skeel.
\newblock Integration schemes for molecular dynamics and related applications.
\newblock {\em The Graduate Student’s Guide to Numerical Analysis’ 98:
  Lecture Notes from the VIII EPSRC Summer School in Numerical Analysis}, pages
  119--176, 1999.

\bibitem{skeel2002impulse}
Robert~D Skeel and Jes{\"u}s~A Izaguirre.
\newblock An impulse integrator for {L}angevin dynamics.
\newblock {\em Molecular Physics}, 100(24):3885--3891, 2002.

\bibitem{villani2006hypocoercivity}
C\'{e}dric Villani.
\newblock Hypocoercivity.
\newblock {\em Mem. Amer. Math. Soc.}, 202(950):iv+141, 2009.

\bibitem{wu2001large}
Liming Wu.
\newblock Large and moderate deviations and exponential convergence for
  stochastic damping {H}amiltonian systems.
\newblock {\em Stochastic Process. Appl.}, 91(2):205--238, 2001.

\bibitem{alamo2017word}
Alfonso~Alamo Zapatero.
\newblock {\em Word series for the numerical integration of stochastic
  differential equations}.
\newblock PhD thesis, Universidad de Valladolid, 2017.

\bibitem{zhang2023improved}
Shunshi Zhang, Sinho Chewi, Mufan Li, Krishna Balasubramanian, and Murat~A
  Erdogdu.
\newblock Improved discretization analysis for underdamped {L}angevin {M}onte
  {C}arlo.
\newblock In {\em The Thirty Sixth Annual Conference on Learning Theory}, pages
  36--71. PMLR, 2023.

\end{thebibliography}

\end{document}